\newcommand{\bit}{\begin{itemize}}
\newcommand{\eit}{\end{itemize}}
\newcommand{\be}{\begin{equation}}
\newcommand{\ee}{\end{equation}}
\newcommand{\bea}{\begin{eqnarray}}
\newcommand{\eea}{\end{eqnarray}}
\newcommand{\Rbb}{\mathbb{R}}
\newcommand{\st}{\mathrm{s.t.}}
\newcommand{\Dcaln}{\mathcal{D}_n}
\newcommand{\Ncal}{\mathcal{N}}
\newcommand{\Lcal}{\mathcal{L}}
\newcommand{\argmin}{\mathop{\mathrm{arg\,min}}}
\newcommand{\tr}{\mathrm{tr}}
\newcommand{\nn}{\nonumber}
\newcommand{\proj}{\mathcal{P}}
\newcommand{\tX}{\tilde{X}}
\newcommand{\tol}{\mathrm{tol}}
\newcommand{\best}{\mathrm{best}}
\newcommand{\Rmn}[1]{\uppercase\expandafter{\romannumeral#1}}
\newcommand{\Fsf}{\mathsf{F}}
\newcommand{\Tsf}{\mathsf{T}}
\newcommand{\1}{\mathbf{1}}
\newcommand{\0}{\mathbf{0}}
\newcommand{\uf}{\underline{f}}
\newcommand{\e}{\mathbf{e}}
\newcommand{\Acal}{\mathcal{A}}
\newcommand{\bX}{\bar{X}}
\newcommand{\bW}{\bar{W}}
\newcommand{\Diag}{\mathop{\mathrm{Diag}}}
\newcommand{\Dcalonen}{\mathcal{D}_{[0,1]^n}}
\newcommand{\um}{\underline{m}}
\newcommand{\om}{\overline{m}}
\newcommand{\revise}[1]{{\color{black}#1}}
\def\LT@makecaption#1#2#3{%
 \LT@mcol\LT@cols c{\hbox to\z@{\hss\parbox[c]\LTcapwidth{%
     \footnotesize
     \setlength{\parindent}{1.5pc}
   \hbox to\hsize{\hfil #1{\normalfont \scshape #2 }  \hfil}
    \setbox\@tempboxa\hbox{{\normalfont\itshape #3}}
   \ifdim\wd\@tempboxa>\hsize
    {\hspace{0pt}\normalfont\itshape #3}\par 
   \else
     \hbox to\hsize{\hfil\box\@tempboxa\hfil}%
   \fi
    \vskip\belowcaptionskip
  }%
 \hss
 } } }
\title{ $L_\MakeLowercase{p}$-norm regularization algorithms for optimization \\over permutation matrices
}
\author{Bo Jiang\thanks{School of Mathematical Sciences, Key Laboratory for NSLSCS of Jiangsu Province,
Nanjing Normal University, Nanjing 210023, CHINA (jiangbo@njnu.edu.cn). Research supported in part by
the NSFC grant 11501298,
{the NSF of Jiangsu Province (BK20150965),  and  the Priority Academic Program Development of Jiangsu Higher Education Institutions.}
}\and Ya-Feng Liu\thanks{State Key Laboratory of Scientific and Engineering Computing, Institute of Computational
Mathematics and Scientific/Engineering Computing, Academy of Mathematics and Systems Science, Chinese Academy of Sciences,
Beijing 100190, CHINA (yafliu@lsec.cc.ac.cn). Research supported in part by NSFC grants 11301516, 11331012, and 11571221.}
\and Zaiwen Wen\thanks{Beijing International Center for Mathematical Research, Peking University,  Beijing 100871, CHINA (wenzw@pku.edu.cn). Research supported in part by NSFC grants 11322109 and 11421101, and by the National
Basic Research Project under the grant 2015CB856002.}}
\date{\today}
\begin{document}
\maketitle
\slugger{mms}{xxxx}{xx}{x}{x--x}

\begin{abstract}
Optimization problems over permutation matrices appear widely in facility
layout, chip design, scheduling, pattern recognition, computer vision, graph matching, etc.  Since this problem is
 NP-hard due to the combinatorial nature of permutation matrices,
we relax the variable to be the more tractable doubly stochastic matrices and add an $L_p$-norm ($
0 < p < 1$) regularization term to the objective function.  The optimal
solutions of the $L_p$-regularized problem are the same as the original problem
if the regularization parameter is sufficiently large. A lower bound estimation
of the nonzero entries of the stationary points and some connections between the
local minimizers and the permutation matrices are further established.  Then we
propose an $L_p$ regularization algorithm with local refinements. The algorithm approximately solves a sequence of $L_p$ regularization subproblems by the projected gradient
method using a nonmontone line search with the Barzilai-Borwein step sizes. Its performance can be further improved if it is combined with certain local search methods, the cutting plane techniques as well as a new negative proximal point scheme. Extensive numerical results on QAPLIB  and the bandwidth minimization problem  show that our proposed algorithms can often find reasonably high quality solutions within a competitive amount of time.
\end{abstract}

\begin{keywords}
permutation matrix, doubly stochastic matrix, quadratic assignment problem, $L_p$ regularization, cutting plane, negative proximal point, Barzilai-Borwein method
\end{keywords}
\begin{AMS} 65K05, 90C11, 90C26, 90C30 \end{AMS}

\pagestyle{myheadings}
\thispagestyle{plain}
\markboth{\sc{optimization over permutation matrices}}{\sc{Bo Jiang, Ya-Feng Liu, and Zaiwen Wen}}

 \section{Introduction}
In this paper, we consider optimization  over permutation matrices:
 \be \label{equ:prob:optperm}
    \min\limits_{X \in \Pi_n} \, f(X),
 \ee
 where $f(X) \colon \Rbb^{n \times n} \rightarrow \Rbb^n$ is  differentiable and $\Pi_n$ is the set of $n$-order permutation matrices, namely,
 \[\Pi_n =  \{X \in \Rbb^{n \times n} \mid X\e = X^{\Tsf} \e = \e,  X_{ij} \in \{0,1\}\},\]
in which  $\e\in \Rbb^n$ is a vector of all ones.  Given  two groups of
correlative objects associated with the rows and columns of a square matrix,
respectively, each permutation matrix implies an assignment from objects in one
group to objects in the other group. Thus problem \eqref{equ:prob:optperm} is
also referred to as the nonlinear assignment problem since it looks for the best
assignment with the smallest nonlinear cost among the two groups.

One of the most famous special cases of problem \eqref{equ:prob:optperm} is the  quadratic assignment problem (QAP) \cite{koopmans1957assignment}, one of the hardest combinatorial optimization problems: \begin{equation} \label{equ:prob:qap}
   \min\limits_{X \in \Pi_n} \, \tr(A^{\Tsf} X B X^{\Tsf}),
 \end{equation}
 where $A, B \in \Rbb^{n \times n}$. The QAP has wide applications in
 statistics, facility layout, chip design, keyboards design, scheduling, manufacturing, etc. For more details, one can refer to \cite{burkard2013quadratic, drezner2015quadratic} and references therein.  Some generalizations of QAP are also investigated, such as the cubic, quartic, and generally $N$-adic assignment problems, see \cite{lawler1963quadratic}.  Burkard {\it et al.} \cite{qela1994biquadratic} discussed one application of biquadratic assignment, a special quartic assignment,  in very-large-scale integrated circuit design.
 For other applications of general nonlinear assignment, one can refer to \cite{samworth2012independent}.

{It is generally not practical to solve problem \eqref{equ:prob:optperm} exactly due to its strong NP-hardness.  The goal of this paper is to develop fast algorithms which can find high quality permutation matrices for problem \eqref{equ:prob:optperm}, especially when the problem dimension is large.}

\subsection{Related works}
Although most of the existing methods are mainly tailored for QAP
\eqref{equ:prob:qap}, many of them can be easily extended to the general problem
\eqref{equ:prob:optperm}.  Thus in this subsection, we simply review some
related works on QAP.  Since in this paper we are interested in methods which
can quickly find high quality solutions, we shall not introduce the lower bound
methods  \cite{kim2015lagrangian, xia2006new} and the exact methods \cite{adams2007level, fischetti2012three} in details. For these algorithms, we refer the readers to the review papers \cite{anstreicher2003recent, burkard2013quadratic, burkard1999quadratic,  drezner2015quadratic, flnke2011quadratic,  loiola2007survey}. We next briefly introduce a few approximation methods, which often return good approximate solutions (which
are permutation matrices) for QAP \eqref{equ:prob:qap} in a reasonable time.

The philosophy of the vertex based methods is similar to that
of the simplex methods for linear programming.  Specifically, it updates
the iterates from one permutation matrix to another.  Different strategies of
updating permutation matrices lead to different methods, including local search
methods \cite{frieze1989algorithms, murthy1992local}, greedy randomized adaptive
search procedures \cite{feo1995greedy, pardalos1994greedy}, tabu search methods  \cite{battiti1994reactive, taillard1991robust}, genetic algorithms \cite{ahuja2000greedy, tate1995genetic}.  For a comprehensive review on the vertex based methods, one can refer to \cite{burkard2013quadratic} and references therein.

The interior-point based methods generates a permutation matrix along some ``central'' path (of interior points) of the set of doubly stochastic matrices, also known as the Birkhoff polytope:
\[\Dcaln =  \{X \in \Rbb^{n \times n} \mid X\e = X^{\Tsf} \e = \e,  X \geq \0\},\]
where the symbol $\geq$ denotes the componentwise ordering and  $\0 \in \Rbb^{n \times n}$ is the all-zero matrix.  By the Birkhoff-von Neumann theorem \cite{birkhoff1946tres}, we know that $\Dcaln$ is the convex hull of $\Pi_n$ and the set of vertices of $\Dcaln$ is exactly $\Pi_n$.
 Similar to the interior-point methods for linear programming, these methods construct a path of interior points by solving a
sequence of regularization problems over $\Dcaln$.

Xia \cite{xia2010efficient} proposed a Lagrangian smoothing algorithm for QAP \eqref{equ:prob:qap} which solves a sequence of $L_2$  regularization subproblem
\be \label{equ:prob:qap:l2}
\min_{X \in \Dcaln} \, \tr(A^{\Tsf} X B X^{\Tsf}) + \mu_k \|X\|_{\Fsf}^2
\ee
with dynamically decreasing parameters $\left\{\mu_k \mid k = 0, 1, 2, \ldots \right\}.$ The
initial value $\mu_0$ in subproblem \eqref{equ:prob:qap:l2} was chosen such that the problem is convex and the subproblem with fixed $\mu_k$ was approximately solved by the Frank-Wolfe algorithm up to fixed number of iterations (it was called the truncated Frank-Wolfe method in \cite{xia2010efficient}).
  Note that Bazaraa and Sherali  \cite{bazaraa1982use} first considered
  regularization problem \eqref{equ:prob:qap:l2} where $\mu_k$ is chosen such
  that the problem is strongly concave, but they only solved
  \eqref{equ:prob:qap:l2} once.  In addition, Huang \cite{huang2008continuous}
  used the quartic term $\|X \odot (\1 - X) \|_{\Fsf}^2$ to construct the regularization problem, where  $\odot$ is the
  Hadamard product  and $\1 \in \Rbb^{n \times n}$ is the matrix of all ones.

One special case of QAP \eqref{equ:prob:qap} (and thus problem \eqref{equ:prob:optperm}) is the graph matching problem:       \begin{equation} \label{equ:prob:gmp}
   \min\limits_{X \in \Pi_n} \, \|AX - XB\|_{\Fsf}^2,
 \end{equation}
 which has wide applications in pattern recognition, computer vision, etc
 \cite{conte2004thirty}.  Note that the objective function in
 \eqref{equ:prob:gmp} is always convex in $X$ and can be rewritten as $-2\tr(A^{\Tsf} X B X^{\Tsf}) + \|A\|_{\Fsf}^2 + \|B\|_{\Fsf}^2$, which is no longer convex in $X$.  By constructing a convex and a
 concave quadratic optimization problem over doubly stochastic matrices,
 \cite{liu2012extended, zaslavskiy2009path} considered  path following
 algorithms which solve a sequence of convex combinations of the convex and
 concave optimization problems, wherein the combination parameters were chosen
 such that the resulting problems change from convex to concave.  Moreover, \cite{lyzinski2014graph}  observed that initialization with a convex problem can improve the practical performance of the regularization algorithms.

Recently, Fogel {\it et al.} \cite{fogel2013convex} used QAP  to solve the seriation and 2-SUM problems. QAP considered therein takes the following special structure:
\begin{equation} \label{equ:prob:qap:vector}
   \min\limits_{X \in \Pi_n} \,  \pi_I X^{\Tsf} L_A X \pi_I,
 \end{equation}
where $A$ is binary symmetric, $\pi_I = (1, 2, \ldots, n)^{\Tsf}$, and  $L_A =
\Diag(A \e) - A$. Here, $\Diag(A\e)$ represents a diagonal matrix whose diagonal
elements are $A\e$. They  used the regularization term $\|P X\|_{\Fsf}^2$ with $P = I_n - \frac1n \1$, where $I_n$ denotes the identity matrix of size $n$, to construct  a convex relaxation problem over $\Dcaln$ for \eqref{equ:prob:qap:vector}.  Using a recent result of Goemans \cite{goemans2015}, Lim and Wright
\cite{lim2014beyond}  constructed a new convex relaxation problem over a convex
hull of the permutation vectors for \eqref{equ:prob:qap:vector}. In their
problem,  the number of variables and constraints  reduces to $O(n \log n)$ in
theory and $O(n \log^2 n)$ in practice.   The advantage of this new formulation
was illustrated by numerical experiments. However, as pointed in
\cite{lim2014beyond},  it is still unclear how to extend their  new formulation
to the more general QAP \eqref{equ:prob:qap}.

The aforementioned regularization or relaxation methods all considered solving optimization over $\Dcaln$. By using the fact that $\Pi_n  = \{X \mid X^{\Tsf} X = I_n, X \geq 0\}$, Wen and Yin \cite{wen2013feasible} reformulated QAP \eqref{equ:prob:qap} as
\be
  \min_{X \in \Rbb^{n \times n}}\, \tr(A^{\Tsf} (X \odot X) B (X \odot X)^{\Tsf}) \quad \st \quad X^{\Tsf} X = I_n,~X \geq 0  \nn
\ee
and proposed an efficient algorithm by applying the augmented Lagrangian method to handle the constraint $X \geq 0$.  In their algorithm, a sequence of augmented Lagrangian subproblems with orthogonality constraint is solved inexactly.

\subsection{Our contribution}  In this paper, we consider optimization problem
\eqref{equ:prob:optperm} over permutation matrices and propose an $L_{p}$-norm
regularization model with $0 < p< 1$.  The $L_{p}$-norm regularization is exact
in the sense that its optimal solutions are the same as the original problem if
the regularization parameter is sufficiently large.  We derive a lower bound
estimate for nonzero entries of the stationary points and establish connections
between the local minimizers and the permutation matrices.  Then we propose an
algorithm with local refinements.  Basically, it solves approximately a sequence
of $L_p$ regularization subproblems by  the projected gradient method using nonmontone line search
with the Barzilai-Borwein (BB) step sizes.  The projection of a matrix onto the
set of doubly stochastic matrices is  solved by a dual gradient method which is often
able to find a highly accurate solution faster than the commercial solver MOSEK.
The performance of our algorithm can be further improved if it is combined with
certain local search methods, cutting plane techniques as well as a new
negative proximal point scheme. In fact, we show that the global solution of the
regularized model  with a negative proximal point term is also the optimal
solution of problem \eqref{equ:prob:optperm}. In particular, it is the farthest
point in the solution set of  \eqref{equ:prob:optperm} to the given point in the
proximal term. Therefore, this technique enables us to get rid of a large
portion  of non-optimal permutation matrices.
 Numerical results on QAPLIB  show that the
proposed algorithm can quickly find high quality approximate solutions. For
example,  our proposed
algorithm can find a solution of the largest problem instance ``tai256c'' in
QAPLIB with a gap $0.2610\%$ to the best known solution in
 less than a half minute on an ordinary  PC.

\subsection{Notation}
For any $X \in \Rbb^{n \times n}$,  $\|X\|_0$ denotes the number of nonzero
entries of $X$ and its $L_{p}$-norm is $\|X\|_p^p = \sum_{i=1}^n \sum_{j = 1}^n |X_{ij}|^p$ for $0< p < 1$.
The matrix $X^p \in \Rbb^n$ is defined by $(X^p)_{ij} = X_{ij}^p$ for each $(i, j) \in \Ncal \times \Ncal$ with $\Ncal := \{1, \ldots, n\}.$
The inner product of $M, N\in \Rbb^{m \times n}$ is defined as $\langle M, N \rangle = \tr(M^{\Tsf} N),$ where $\tr(\cdot)$ is the trace operator. The Kronecker product of any two matrices $M$ and $N$ is denoted as $M \otimes N$.
We define $\Dcalonen = \{X \in \Rbb^{n \times n} \mid \0 \leq X \leq \1\}$, where the symbol $\leq$ denotes the  componentwise ordering.
  For any $X \in \Dcaln$, let the support set of $X$ be $\Lambda(X) = \{(i,j)\in \Ncal \times \Ncal \mid X_{ij} > 0\}$.
    Denote $\Lambda(X_{i \cdot}) = \{j \in \Ncal \mid (i,j)\in \Lambda(X)\}$ and $\Lambda(X_{\cdot j}) = \{i \in \Ncal \mid (i,j)\in \Lambda(X)\}$.
 For any $X \in \Pi_n$, we  define its corresponding permutation vector as
 $\pi \in \Rbb^n$ with the $j$-th  element being  $\pi_j = i$, where $i \in
 \Ncal$ satisfies $X_{ij} = 1$.  We denote the 2-neighborhood of $X$ as
 $\Ncal_2 (X) = \{Z \in \Pi_n: \|Z - X \|_{\Fsf}^2 \leq 4\} = \{Z \in \Pi_n \mid \langle Z,  X \rangle \geq n - 2\}$ and
 $\Ncal_2^{\best}(X)$ is the set of permutation matrices which take the least function value, in the  sense of $f(\cdot)$, among all permutation matrices in $\Ncal_2 (X)$.  We say that $X$ is locally 2-optimal if $X \in \Ncal_2^{\best}(X)$.

\subsection{Organization} The rest of this paper is organized as follows. Some
preliminaries are provided in Section \ref{section:preliminaries}. The $L_0$
regularization problem is introduced in Section \ref{subsection:l0} and the
extension to the general $L_p$  regularization problem is presented in Section
\ref{subsection:lp}. Our $L_p$ regularization algorithmic framework is given in
Section \ref{subsection:lp:alg:frame} and  a practical $L_p$
regularization algorithm is developed in Section
\ref{subsection:lp:alg:practical}.  A fast dual BB method for computing the
projection onto the set of doubly stochastic matrices is proposed in Section
\ref{subsection:dualBB:proj}. We combine the $L_p$ regularization algorithm with
the cutting plane technique in Section \ref{subsection:lp:cp} and propose a negative proximal point technique in Section \ref{subsection:lp:negProx}.
 Some implementation details, including an efficient way of doing local
 2-neighborhood search, are given  in Section \ref{subsection:impDetails}, while
 numerical results on QAPLIB are reported in Sections \ref{subsection:lp:comp:0}
 -- \ref{subsection:lp:comp}. We apply our proposed algorithms to solve
 a class of real-world problems, namely, the bandwidth minimization problem in Section \ref{section:bm}. Finally, we make some concluding remarks in Section \ref{secion:conclusion}.

\section{Preliminaries} \label{section:preliminaries}
Assume that $\nabla f(X)$ is Lipschitz continuous on $\Dcalonen$ with Lipschitz constant $L$, that is,
\be \label{equ:grad:Lips0}
\| \nabla f(X) - \nabla f(Y) \|_{\Fsf} \leq L \|X - Y\|_{\Fsf},  \quad \forall \ X, Y \in \Dcalonen.
\ee
Moreover, we assume
\be\label{equ:f:concave}
 \frac{\underline{\nu}_f}{2} \|Y - X\|_{\Fsf}^2 \le f(Y) - f(X) - \langle \nabla f(X), Y - X \rangle  \le \frac{\overline{\nu}_f}{2} \|Y - X\|_{\Fsf}^2, \quad \forall \ X, Y \in  \Dcaln.
\ee
 Note that when $\overline{\nu}_f < 0$, \eqref{equ:f:concave} implies that $f(\cdot)$ is strongly concave.
By some easy calculations, \eqref{equ:f:concave} indicates that
\be\label{equ:grad:Lips}
f(t X + (1-t)Y) \geq t f(X) + (1 -t)f(Y) - \frac{\overline{\nu}_f}{2} t(1 - t) \|X - Y\|_{\Fsf}^2
\ee
holds for all $X, Y \in \Dcaln$ and $t \in [0,1]$.  Notice that
assumptions \eqref{equ:grad:Lips0} and \eqref{equ:f:concave} are mild. It can be easily verified that the objective in QAP \eqref{equ:prob:qap} satisfies the two assumptions with $L = \max\{|\overline{\nu}_f|, |\underline{\nu}_f|\} $, where $\overline{\nu}_f$ and $\underline{\nu}_f$  are taken as the largest and smallest eigenvalues of $B^{\Tsf} \otimes A^{\Tsf} + B \otimes A \in \Rbb^{n^2 \times n^2}$, respectively.   

Consider the function  $h(x) = (x+\epsilon)^p$ over $x \in [0,1]$ with $0< p <
1$ and $\epsilon \geq 0$. It is  straightforward  to show that $h(\cdot)$ is
strongly concave with the parameter
  \be \label{equ:nu_h}
  \overline{\nu}_h \coloneqq p(1 - p)(1 + \epsilon)^{p-2}
  \ee
 on the interval [0,1].  Moreover, the inequality $h(x) - h(y)  > \langle h'(x), x - y \rangle$ holds for any $x \neq y \in [0,1]$ and $ x + \epsilon >0$.  Consider the function
\be \label{equ:hX}
h(X) \coloneqq  \|X + \epsilon \1\|_p^p = \sum_{i =1}^n \sum_{j=1}^n (X_{ij} + \epsilon)^p.
\ee
 Using the strong concavity of $h(x)$ and the separable structure of $h(X)$, we can  establish the following result.
\begin{proposition}\label{lemma:concave:lp}
  The function $h(X)$ with $0< p < 1$ and $\epsilon \geq 0$ is strongly concave with parameter $\overline{\nu}_h$ on $\Dcalonen$, namely,
  \be\label{equ:lemma:concave:lp}
h(tX + (1-t) Y) \geq th(X) + (1-t) h(Y) + \frac{\overline{\nu}_h}{2} t(1-t)\|Y - X\|_{\Fsf}^2
\ee
holds for any $X, Y \in \Dcalonen$.
\end{proposition}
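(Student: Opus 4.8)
The plan is to reduce the matrix inequality \eqref{equ:lemma:concave:lp} to the scalar strong concavity of $h(x)=(x+\epsilon)^p$ recorded just above the statement, exploiting the fact that $h(X)$ in \eqref{equ:hX} is a separable sum of such scalar terms evaluated entrywise.

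First I would recall the one-dimensional estimate. On $[0,1]$ one has $h''(x) = -p(1-p)(x+\epsilon)^{p-2}$; since $p-2<0$, the factor $(x+\epsilon)^{p-2}$ is decreasing in $x$ and therefore attains its minimum over $[0,1]$ at $x=1$, so $h''(x)\le -p(1-p)(1+\epsilon)^{p-2} = -\overline{\nu}_h$ for every $x\in[0,1]$. This is exactly strong concavity of $h$ with parameter $\overline{\nu}_h$, which gives, for all $x,y\in[0,1]$ and $t\in[0,1]$,
\[
h(tx+(1-t)y) \;\ge\; t\,h(x) + (1-t)\,h(y) + \frac{\overline{\nu}_h}{2}\,t(1-t)\,(x-y)^2 .
\]

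Next, given $X,Y\in\Dcalonen$ and $t\in[0,1]$, each entry satisfies $X_{ij},Y_{ij}\in[0,1]$ and $(tX+(1-t)Y)_{ij}=tX_{ij}+(1-t)Y_{ij}\in[0,1]$, so I would apply the displayed scalar inequality to the pair $(X_{ij},Y_{ij})$ for every $(i,j)\in\Ncal\times\Ncal$ and add up the resulting $n^2$ inequalities. By the definition \eqref{equ:hX}, the left-hand sides sum to $h(tX+(1-t)Y)$, the first two terms on the right sum to $t\,h(X)+(1-t)\,h(Y)$, and $\sum_{i,j}(X_{ij}-Y_{ij})^2=\|X-Y\|_{\Fsf}^2$, which yields \eqref{equ:lemma:concave:lp} precisely.

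There is essentially no hard part here: the only points worth a word are that the entrywise bound $h''(x)\le -p(1-p)(1+\epsilon)^{p-2}$ holds uniformly over all of $[0,1]$ — which follows from $p-2<0$ and the monotonicity of $x\mapsto(x+\epsilon)^{p-2}$ — and that the constant produced this way coincides with $\overline{\nu}_h$ in \eqref{equ:nu_h}. Since summation preserves the inequality term by term, no convexity argument beyond the scalar case is required.
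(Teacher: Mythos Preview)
Your proposal is correct and follows exactly the approach the paper indicates: the paper does not spell out a proof but simply states, just before the proposition, that the result follows ``using the strong concavity of $h(x)$ and the separable structure of $h(X)$,'' which is precisely what you do. One minor remark: when $\epsilon=0$, $h''$ is not defined at $x=0$, so your second-derivative bound holds on $(0,1]$ and the strong-concavity inequality then extends to all of $[0,1]$ by continuity; this is harmless and your argument goes through unchanged.
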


\section{$L_p$-norm regularization model for \eqref{equ:prob:optperm}} \label{section:lp}
In this section, we first  introduce the $L_0$-norm regularization model for \eqref{equ:prob:optperm} in
Section \ref{subsection:l0}. Then, we focus on the  $L_p$-norm regularization model for \eqref{equ:prob:optperm} in Section \ref{subsection:lp}.

\subsection{$L_0$-norm regularization}\label{subsection:l0}
Observe that $\Pi_n$ can be equivalently characterized as
 \be \label{equ:Pi:l0}
 \Pi_n = \Dcaln \cap \{X \mid \|X\|_0 = n \}.
 \ee
 Thus problem \eqref{equ:prob:optperm} is equivalent to the problem of minimizing $f(X)$ over $\Dcaln$ intersecting with the sparse constraint $\|X\|_0 = n$.
 This fact motivates us to consider the $L_0$ regularization model for \eqref{equ:prob:optperm} as follows:
\be\label{equ:prob:qap:l0}
\min_{X \in \Dcaln} \, f(X) + \sigma \|X\|_0,
\ee
where $\sigma > 0$.
Let $X^*$ be  one global solution of  \eqref{equ:prob:optperm}. Denote
\be \label{equ:fstart:fbar}
f^* \coloneqq f(X^*) \quad \mbox{and}\quad
\uf \coloneqq \min\limits_{X \in \Dcaln} \, f(X).
\ee We immediately have $f^* \geq \uf$ and obtain the following lemma.

\begin{lemma}\label{lemma:l0:exact}
  Suppose $\sigma > \frac{1}{2}(f^* - \uf),$
  where $f^*$ and $\uf$ are given in \eqref{equ:fstart:fbar}. Then any global solution $X(\sigma)$ of \eqref{equ:prob:qap:l0}  is also the global solution of \eqref{equ:prob:optperm}.
\end{lemma}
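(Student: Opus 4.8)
The plan is to isolate a single combinatorial fact about the Birkhoff polytope and then finish with a one-line comparison argument. The fact is: every $X\in\Dcaln$ satisfies $\|X\|_0\ge n$, with equality only when $X\in\Pi_n$, and moreover every $X\in\Dcaln\setminus\Pi_n$ satisfies $\|X\|_0\ge n+2$. Granting this, let $X(\sigma)$ be a global minimizer of \eqref{equ:prob:qap:l0} and let $X^*$ be a global solution of \eqref{equ:prob:optperm}, so that $X^*\in\Pi_n\subseteq\Dcaln$, $\|X^*\|_0=n$, and $f(X^*)=f^*$. Optimality of $X(\sigma)$ gives $f(X(\sigma))+\sigma\|X(\sigma)\|_0\le f^*+\sigma n$. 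If $X(\sigma)\notin\Pi_n$ then $\|X(\sigma)\|_0\ge n+2$, hence $f(X(\sigma))\le f^*-2\sigma$; since $X(\sigma)\in\Dcaln$ we also have $f(X(\sigma))\ge\uf$, so $\uf\le f^*-2\sigma$, i.e. $\sigma\le\frac{1}{2}(f^*-\uf)$, contradicting the hypothesis. Therefore $X(\sigma)\in\Pi_n$, so $\|X(\sigma)\|_0=n$ and the displayed inequality collapses to $f(X(\sigma))\le f^*$; since $X(\sigma)$ is feasible for \eqref{equ:prob:optperm}, also $f(X(\sigma))\ge f^*$, whence $f(X(\sigma))=f^*$ and $X(\sigma)$ solves \eqref{equ:prob:optperm}.

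To establish the combinatorial fact — the only real content — I would work with the bipartite support graph $G$ of $X$: $n$ row-vertices, $n$ column-vertices, an edge for each positive entry, and $E=\|X\|_0$ edges. Every vertex has degree at least $1$ because all row and column sums equal $1$; in particular $E\ge n$. For the refined bound, assume $X\notin\Pi_n$ and argue two points. First, $G$ is not a forest: a nonempty forest with minimum degree $1$ has a leaf, which by symmetry we may take to be a row $i$ adjacent only to column $j$; then $X_{ij}=1$, which forces $j$ to be adjacent only to $i$, so $\{i,j\}$ is an isolated edge, and deleting this row and column leaves a smaller doubly stochastic matrix with forest support, so induction would give $X\in\Pi_n$, a contradiction. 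Second, each connected component of $G$ spans equally many row- and column-vertices, because the total mass of the submatrix it indexes equals both its number of rows and its number of columns; hence if $G$ had $n$ components, each would consist of a single row/column pair joined by an edge of value $1$, again forcing $X\in\Pi_n$, so $G$ has at most $n-1$ components. A graph on $2n$ vertices with $c$ components and at least one cycle has at least $(2n-c)+1$ edges, so $\|X\|_0=E\ge 2n-(n-1)+1=n+2$.

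The main obstacle is precisely the bound $\|X\|_0\ge n+2$: the easy estimate $\|X\|_0\ge n+1$ for a nonpermutation doubly stochastic matrix would only deliver the lemma under the stronger assumption $\sigma>f^*-\uf$, so it is essential to exclude support graphs with exactly one extra edge, which is exactly what the pair of claims ``$G$ is not a forest'' and ``$G$ has at most $n-1$ components'' accomplishes. Everything else in the argument — the comparison with $X^*$, the inequality $f(X)\ge\uf$ on $\Dcaln$, and the final squeeze $f^*\le f(X(\sigma))\le f^*$ — is routine.
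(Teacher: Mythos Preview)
Your argument is correct and follows the same route as the paper: compare $X(\sigma)$ to a permutation optimizer $X^*$, invoke $\|X(\sigma)\|_0\ge n+2$ when $X(\sigma)\notin\Pi_n$ together with $f(X(\sigma))\ge\uf$ to derive a contradiction, then collapse the inequality to $f(X(\sigma))\le f^*$. The only difference is that the paper simply asserts the bound $\|X\|_0\ge n+2$ for $X\in\Dcaln\setminus\Pi_n$, whereas you supply a full proof via the bipartite support graph; your justification is sound and is a welcome addition rather than a departure from the paper's strategy.
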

\begin{proof}
First,  we use the contradiction argument to show that $X(\sigma)$ is a permutation matrix. Suppose that $X(\sigma)$ is not a permutation matrix, then  $\|X(\sigma)\|_0 \geq n + 2$. From the optimality of $X(\sigma)$, we know that
\be \label{equ:lemma:l0:exact:a1}
f^* + \sigma \|X^*\|_0 \geq f(X(\sigma)) + \sigma \|X(\sigma)\|_0.
\ee Moreover, by the definition of $\uf$, we have
$ f(X(\sigma)) + \sigma \|X(\sigma)\|_0 \geq \uf +  \sigma \|X(\sigma)\|_0$.
This inequality, together with \eqref{equ:lemma:l0:exact:a1} and the fact $\|X(\sigma)\|_0 \geq n + 2$, implies
\[
\sigma \leq (f^* - \uf)/\left(\|X(\sigma)\|_0 - \|X^*\|_0\right) \leq (f^* - \uf)/2,
\]
which is a contradiction to $\sigma > \frac12 (f^* - \uf)$.

Using $\|X(\sigma)\|_0 =
\|X^*\|_0 = n$ and \eqref{equ:lemma:l0:exact:a1}, we have $ f^* \geq f(X(\sigma))$, which means that $X(\sigma)$ is the global solution of \eqref{equ:prob:qap}.  \end{proof}

Lemma \ref{lemma:l0:exact} shows that $L_0$ regularization
\eqref{equ:prob:qap:l0} is exact in the sense that $L_0$ regularization problem
\eqref{equ:prob:qap:l0} shares the same global solution with problem
\eqref{equ:prob:optperm}. However, $\|X\|_0$ is not continuous in $X,$ which
makes the $L_0$-norm regularization problem \eqref{equ:prob:qap:l0} hard to be
solved.

\subsection{$L_p$-norm regularization}\label{subsection:lp}
\ The $L_1$-norm regularization $\|X\|_1$ has been shown in
\cite{candes2006stable, donoho2006compressed} to be a good approximation of
$\|X\|_0$ under some mild conditions. However, for any $X \in \Dcaln$, $\|X\|_1$
is always equal to the constant $n$. This fact implies that the $L_1$-norm
regularization does not work for problem \eqref{equ:prob:optperm}. Considering
the good performance of the $L_p$-norm regularization in recovering sparse
solutions \cite{chen2010lower, ji2013beyond, liu2015joint}, we aim to investigate this technique to handle the hard term $\|X\|_0$. Given any $\epsilon \geq 0$ and $0<p<1$, we  use
$h(X) = \|X + \epsilon \1\|_p^p$ to approximate $\|X\|_0$ in
\eqref{equ:prob:qap:l0} and obtain the corresponding $L_p$-norm regularization model:
 \be\label{equ:prob:qap:lp:epsilon}
 \min\limits_{X \in \Dcaln}\  F_{\sigma,p,\epsilon}(X) \coloneqq f(X) +  \sigma \|X + \epsilon {\bf 1}\|_p^p.
 \ee
Note that the problem of minimizing $h(X)$ over $\Dcaln$ and the problem of minimizing $\|X\|_0$ over $\Dcaln$ have the same solution set of permutation matrices. Therefore, $h(X)$ is a good approximation of $\|X\|_0$ and problem \eqref{equ:prob:qap:lp:epsilon} is a good approximation of problem \eqref{equ:prob:optperm}.
The parameter $\sigma$ in problem \eqref{equ:prob:qap:lp:epsilon} mainly
controls the sparsity of $X$ while the parameter $\epsilon$  affects the
smoothness of the regularizer. 
Roughly speaking, problem \eqref{equ:prob:qap:lp:epsilon} with a large $\sigma$
returns a sparse $X$ close to a permutation matrix, while problem
\eqref{equ:prob:qap:lp:epsilon} with a relatively large (not too small) $\epsilon$
is often easier to be solved.

We now show the exactness of the $L_p$-norm regularization \eqref{equ:prob:qap:lp:epsilon}.
 \begin{theorem}\label{theorem:lp:epsilon:exact}
   Suppose that $X_{\sigma, p, \epsilon}$ is  a global solution of problem \eqref{equ:prob:qap:lp:epsilon} with
   \be \label{equ:theorem:lp:epsilon:exact:00}
   \sigma > \sigma^*_{p, \epsilon} \coloneqq \max\left\{\frac{\overline{\nu}_f}{\overline{\nu}_h},0\right\},
   \ee
   where $\overline{\nu}_f$ and $\overline{\nu}_h$ are defined in \eqref{equ:f:concave} and \eqref{equ:nu_h}, respectively. Then $X_{\sigma, p, \epsilon}$ is also a global solution of problem \eqref{equ:prob:optperm}.
 \end{theorem}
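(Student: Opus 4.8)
The plan is to show that the extra $L_p$ penalty is large enough to force any global minimizer of \eqref{equ:prob:qap:lp:epsilon} to be a vertex of $\Dcaln$, i.e.\ a permutation matrix, and then that among permutation matrices it must minimize $f$. The key structural fact is that on $\Dcaln$ the function $h(X)=\|X+\epsilon\1\|_p^p$ is \emph{strongly concave} with parameter $\overline{\nu}_h$ (Proposition~\ref{lemma:concave:lp}), while $f$ satisfies the two-sided bound \eqref{equ:f:concave}; hence for $\sigma>\overline{\nu}_f/\overline{\nu}_h$ (and $\sigma>0$) the objective $F_{\sigma,p,\epsilon}$ is itself strongly concave on $\Dcaln$. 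A strongly concave function on a polytope attains its minimum only at an extreme point, and by the Birkhoff--von Neumann theorem the extreme points of $\Dcaln$ are exactly the permutation matrices; this gives $X_{\sigma,p,\epsilon}\in\Pi_n$.

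First I would make the strong-concavity claim precise. Combining the right-hand inequality of \eqref{equ:f:concave}, which gives $f(tX+(1-t)Y)\ge tf(X)+(1-t)f(Y)-\frac{\overline{\nu}_f}{2}t(1-t)\|X-Y\|_{\Fsf}^2$ (this is \eqref{equ:grad:Lips}, valid whether or not $\overline{\nu}_f$ is positive), with $\sigma$ times the inequality \eqref{equ:lemma:concave:lp} from Proposition~\ref{lemma:concave:lp}, yields for all $X,Y\in\Dcaln$ and $t\in[0,1]$
\[
F_{\sigma,p,\epsilon}(tX+(1-t)Y)\ \ge\ tF_{\sigma,p,\epsilon}(X)+(1-t)F_{\sigma,p,\epsilon}(Y)+\frac{\sigma\overline{\nu}_h-\overline{\nu}_f}{2}\,t(1-t)\|X-Y\|_{\Fsf}^2 .
\]
Since $\overline{\nu}_h>0$ and $\sigma>\overline{\nu}_f/\overline{\nu}_h$ (using $\sigma>0$ when $\overline{\nu}_f\le0$), the coefficient $\sigma\overline{\nu}_h-\overline{\nu}_f$ is strictly positive, so $F_{\sigma,p,\epsilon}$ is strictly concave on $\Dcaln$.

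Next I would argue that $X_{\sigma,p,\epsilon}$ is a permutation matrix. If it were not, by Birkhoff--von Neumann it lies in the relative interior of a segment $[X_1,X_2]$ with $X_1\ne X_2$ in $\Dcaln$, say $X_{\sigma,p,\epsilon}=tX_1+(1-t)X_2$ with $t\in(0,1)$; strict concavity then gives $F_{\sigma,p,\epsilon}(X_{\sigma,p,\epsilon})>tF_{\sigma,p,\epsilon}(X_1)+(1-t)F_{\sigma,p,\epsilon}(X_2)\ge\min\{F_{\sigma,p,\epsilon}(X_1),F_{\sigma,p,\epsilon}(X_2)\}$, contradicting global optimality. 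Hence $X_{\sigma,p,\epsilon}\in\Pi_n$. Finally, on $\Pi_n$ every entry of $X$ is $0$ or $1$, so $\|X+\epsilon\1\|_p^p=n\cdot 1+ (n^2-n)\epsilon^p$ is a constant independent of $X\in\Pi_n$; therefore for any $X\in\Pi_n$ we get $f(X_{\sigma,p,\epsilon})=F_{\sigma,p,\epsilon}(X_{\sigma,p,\epsilon})-\sigma(n+(n^2-n)\epsilon^p)\le F_{\sigma,p,\epsilon}(X)-\sigma(n+(n^2-n)\epsilon^p)=f(X)$, so $X_{\sigma,p,\epsilon}$ minimizes $f$ over $\Pi_n$ and is a global solution of \eqref{equ:prob:optperm}.

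The only delicate point — the ``main obstacle'' — is bookkeeping the sign conventions around $\overline{\nu}_f$: \eqref{equ:f:concave} is a two-sided bound that does not presuppose $f$ is concave, so one must be careful that the combined coefficient $\sigma\overline{\nu}_h-\overline{\nu}_f$ is genuinely positive under the stated threshold $\sigma>\max\{\overline{\nu}_f/\overline{\nu}_h,0\}$ (the $\max$ with $0$ is exactly what handles the case $\overline{\nu}_f\le0$, where $f$ is already concave and any $\sigma>0$ works). Everything else is a routine application of strict concavity on a polytope plus the Birkhoff--von Neumann theorem; no further estimates are needed.
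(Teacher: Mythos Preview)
Your proof is correct and follows essentially the same route as the paper: establish strong concavity of $F_{\sigma,p,\epsilon}$ on $\Dcaln$ by combining \eqref{equ:grad:Lips} with Proposition~\ref{lemma:concave:lp}, deduce that the global minimizer must be a vertex of $\Dcaln$ (hence a permutation matrix by Birkhoff--von Neumann), and then use that the penalty is constant on $\Pi_n$ to conclude. One tiny slip: for $X\in\Pi_n$ you should have $\|X+\epsilon\1\|_p^p = n(1+\epsilon)^p + (n^2-n)\epsilon^p$ rather than $n\cdot 1 + (n^2-n)\epsilon^p$, but this is immaterial since only constancy on $\Pi_n$ is used.
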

 \begin{proof}
By \eqref{equ:lemma:concave:lp} in Proposition \ref{lemma:concave:lp}, for any $\sigma > 0$, there holds
\be\label{equ:theorem:lp:exact:a2}
\sigma h(tX + (1-t) Y) \geq \sigma \Big(th(X) + (1-t) h(Y) + \frac{\overline{\nu}_h}{2} t(1-t)\|Y - X\|_{\Fsf}^2 \Big)
\ee
for any $X, Y \in \Dcaln$ and $0 \leq t \leq 1$. By the assumption \eqref{equ:f:concave}, we always have \eqref{equ:grad:Lips}.
Summing \eqref{equ:theorem:lp:exact:a2} and \eqref{equ:grad:Lips}, and noticing the definition of  $F_{\sigma, p,\epsilon} (\cdot)$,  we conclude that
 $F_{\sigma,p,\epsilon}(X)$ is strongly concave with the positive parameter
 $\sigma  \overline{\nu}_h - \overline{\nu}_f$ over $\Dcaln$. Thus, the global minimum of \eqref{equ:prob:qap:lp:epsilon} must be attained at the vertices of $\Dcaln$, namely, $X_{\sigma, p,\epsilon}$ must be a permutation matrix.

  Since the global solution $X^*$ of \eqref{equ:prob:optperm} is feasible for \eqref{equ:prob:qap:lp:epsilon}, it follows that
\be
 f(X^*) + \sigma \|X^* + \epsilon \1 \|_p^p \geq f(X_{\sigma,p,\epsilon}) + \sigma \|X_{\sigma,p,\epsilon} + \epsilon \1\|_p^p,
\ee
which, together with  $\|X^*\|_p^p = \|X_{\sigma, p,\epsilon}\|_p^p = n(1 +
\epsilon)^p,$ implies  $f(X^*) \geq f(X_{\sigma, p,\epsilon})$. Recalling that
$X_{\sigma, p,\epsilon}$ is a permutation matrix, we see that  $X_{\sigma,
p,\epsilon}$ is also a  global solution of \eqref{equ:prob:optperm}.
 \end{proof}

 Similar to the results in \cite{bian2015complexity, chen2010lower}, we define the KKT point of \eqref{equ:prob:qap:lp:epsilon} as follows.
\begin{definition}
A point $X \in \Dcaln$ is called as a KKT point of \eqref{equ:prob:qap:lp:epsilon} if there exist $S \in \Rbb^{n \times n}$ and $\lambda, \mu \in \Rbb^n$ such that, for any $(i,j) \in \Ncal \times \Ncal$, there holds
\be \label{equ:KKT}
  \begin{cases}
X_{ij}  S_{ij} = 0,\ S_{ij} \geq 0, \\
 \left(W_{ij} - \lambda_i - \mu_j \right)(X_{ij} + \epsilon) + \sigma p ( X_{ij} + \epsilon)^{p} = \epsilon S_{ij},
\end{cases}
\ee
where $W_{ij} = (\nabla f(X))_{ij}$ and  $S$, $\lambda$, and $\mu$ are the  Lagrange multipliers corresponding to the constraints  $X \geq 0$, $X\e = \e,$ and $X^{\Tsf} \e = \e$, respectively.
\end{definition}

We now estimate the lower bound for nonzero elements of the KKT points of problem \eqref{equ:prob:qap:lp:epsilon}.
\begin{theorem}\label{theorem:lowerbound}
  Suppose that $\bar{X}$ is a KKT point of \eqref{equ:prob:qap:lp:epsilon}.
  Then, for any $(i,j) \in \Lambda(\bX)$, we have
  \be\label{equ:lowerbound}
 \bX_{ij}
  \geq \max\left( \bar c- \epsilon,0\right),
  \ee
  where $\bar c = \big(\|\bX\|_0^{1-p}(n+\|\bX\|_0\epsilon)^p - (n - 1)(1 + \epsilon)^{p-1} + \sqrt{2n} \frac{L \sqrt{n} + \|\nabla f(\0)\|_{\Fsf}}{\sigma p}\big)^{\frac{1}{p-1}}$.
\end{theorem}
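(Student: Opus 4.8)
The plan is to read off from the KKT system \eqref{equ:KKT} a clean expression for $(\bX_{ij}+\epsilon)^{p-1}$ at a support entry, bound its right-hand side, and then invert using that $t\mapsto t^{p-1}$ is decreasing. Fix $(i_0,j_0)\in\Lambda(\bX)$. Since $\bX_{i_0 j_0}>0$, complementarity in \eqref{equ:KKT} forces $S_{i_0 j_0}=0$, and dividing the second line of \eqref{equ:KKT} by $\bX_{i_0 j_0}+\epsilon>0$ gives $W_{i_0 j_0}+\sigma p(\bX_{i_0 j_0}+\epsilon)^{p-1}=\lambda_{i_0}+\mu_{j_0}$; the same manipulation at every $(i,j)\in\Lambda(\bX)$ yields $W_{ij}+\sigma p(\bX_{ij}+\epsilon)^{p-1}=\lambda_i+\mu_j$. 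Hence $\sigma p(\bX_{i_0 j_0}+\epsilon)^{p-1}=\lambda_{i_0}+\mu_{j_0}-W_{i_0 j_0}$, and because $p-1<0$ an upper bound $\lambda_{i_0}+\mu_{j_0}-W_{i_0 j_0}\le\sigma p\,\bar c^{\,p-1}$ immediately gives $\bX_{i_0 j_0}+\epsilon\ge\bar c$, which together with $\bX_{i_0 j_0}\ge0$ is \eqref{equ:lowerbound}. So everything reduces to bounding $\lambda_{i_0}+\mu_{j_0}$ from above.

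I would first control the aggregate $\sum_i\lambda_i+\sum_j\mu_j$. Multiplying the support identity $W_{ij}+\sigma p(\bX_{ij}+\epsilon)^{p-1}=\lambda_i+\mu_j$ by $\bX_{ij}$ and summing over $(i,j)\in\Lambda(\bX)$, the constraints $\bX\e=\bX^{\Tsf}\e=\e$ turn $\sum\bX_{ij}\lambda_i$ and $\sum\bX_{ij}\mu_j$ into $\sum_i\lambda_i$ and $\sum_j\mu_j$, so that
\[
 \textstyle\sum_i\lambda_i+\sum_j\mu_j=\langle\bX,W\rangle+\sigma p\sum_{(i,j)\in\Lambda(\bX)}\bX_{ij}(\bX_{ij}+\epsilon)^{p-1}.
\]
Using $\bX_{ij}(\bX_{ij}+\epsilon)^{p-1}\le(\bX_{ij}+\epsilon)^p$ together with concavity of $t\mapsto t^p$ (Jensen's inequality over the $\|\bX\|_0$ support entries, whose shifted values sum to $n+\|\bX\|_0\epsilon$) bounds the last sum by $\|\bX\|_0^{1-p}(n+\|\bX\|_0\epsilon)^p$, which is exactly the first term inside $\bar c^{\,p-1}$.

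To localize this aggregate estimate at $(i_0,j_0)$ I would invoke the Birkhoff--von Neumann theorem: $\bX$ is a convex combination of permutation matrices, and since $\bX_{i_0 j_0}>0$ at least one of them, with associated permutation $\tau$ satisfying $\tau(i_0)=j_0$, has all $n$ of its ones inside $\Lambda(\bX)$. Writing $c_i:=\lambda_i+\mu_{\tau(i)}=W_{i,\tau(i)}+\sigma p(\bX_{i,\tau(i)}+\epsilon)^{p-1}$, bijectivity of $\tau$ gives $\sum_i c_i=\sum_i\lambda_i+\sum_j\mu_j$, while $c_i\ge W_{i,\tau(i)}+\sigma p(1+\epsilon)^{p-1}$ for every $i$ (using $\bX_{i,\tau(i)}\le1$ and $p-1<0$). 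Isolating $c_{i_0}=\lambda_{i_0}+\mu_{j_0}$, subtracting $W_{i_0 j_0}$, and using $\sum_i W_{i,\tau(i)}=\langle Q_\tau,W\rangle$ for the permutation matrix $Q_\tau$ of $\tau$, I obtain, after substituting the aggregate bound,
\[
 \sigma p(\bX_{i_0 j_0}+\epsilon)^{p-1}\le\langle\bX-Q_\tau,W\rangle+\sigma p\|\bX\|_0^{1-p}(n+\|\bX\|_0\epsilon)^p-(n-1)\sigma p(1+\epsilon)^{p-1}.
\]
Finally $\langle\bX-Q_\tau,W\rangle\le\|\bX-Q_\tau\|_{\Fsf}\|W\|_{\Fsf}$ with $\|\bX-Q_\tau\|_{\Fsf}^2=\|\bX\|_{\Fsf}^2-2\langle\bX,Q_\tau\rangle+n\le 2n$ (since $\|\bX\|_{\Fsf}^2\le\sum_{ij}\bX_{ij}=n$ and $\langle\bX,Q_\tau\rangle\ge0$), and $\|W\|_{\Fsf}=\|\nabla f(\bX)\|_{\Fsf}\le L\|\bX\|_{\Fsf}+\|\nabla f(\0)\|_{\Fsf}\le L\sqrt n+\|\nabla f(\0)\|_{\Fsf}$ by \eqref{equ:grad:Lips0}; dividing by $\sigma p$ produces $(\bX_{i_0 j_0}+\epsilon)^{p-1}\le\bar c^{\,p-1}$, and the conclusion follows as in the first paragraph.

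The Jensen step and the Frobenius-norm estimates are routine; the one place that needs care is the localization — converting a bound on $\sum_i\lambda_i+\sum_j\mu_j$ into a bound on the single combination $\lambda_{i_0}+\mu_{j_0}$ — which is precisely what the support permutation through $(i_0,j_0)$ is for, and it is also where the sharp constant $\sqrt{2n}$ originates, via the nonnegativity $\langle\bX,Q_\tau\rangle\ge0$ rather than a cruder bound on $\|\bX-Q_\tau\|_{\Fsf}$.
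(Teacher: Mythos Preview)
Your proof is correct and follows essentially the same route as the paper: extract $W_{ij}+\sigma p(\bX_{ij}+\epsilon)^{p-1}=\lambda_i+\mu_j$ on the support, aggregate by summing against $\bX$, localize via a permutation matrix $\tX\in\Pi_n$ supported in $\Lambda(\bX)$ with $\tX_{i_0j_0}=1$, and finish with $\|\bX-\tX\|_{\Fsf}\le\sqrt{2n}$ and the Lipschitz bound on $\nabla f$. The only cosmetic differences are that you invoke Birkhoff--von Neumann explicitly to produce the support permutation (the paper states this as a claim without naming it) and that your Jensen step applies concavity of $t\mapsto t^p$ once over all $\|\bX\|_0$ support entries, whereas the paper bounds $\sum(\bX_{ij}+\epsilon)^p$ row by row and then applies concavity of $z\mapsto(1/z+\epsilon)^p z$ across rows; both yield the same $\|\bX\|_0^{1-p}(n+\|\bX\|_0\epsilon)^p$.
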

\begin{proof}
Since $\bX$ is a KKT point of \eqref{equ:prob:qap:lp:epsilon},  it follows from \eqref{equ:KKT} that
\be\label{equ:theorem:lowerbound:a1}
\sum_{i\in \Lambda(\bX_{\cdot j})} \bX_{ij} = 1, \ \forall \ j \in \Ncal, \ \  \sum_{j \in \Lambda(\bX_{i\cdot})} \bX_{ij} = 1, \ \forall \ i \in \Ncal
\ee
and
\be\label{equ:theorem:lowerbound:a2}
\bW_{ij} + \sigma p (\bX_{ij} + \epsilon)^{p-1} = \lambda_i + \mu_j, \quad \forall \ (i,j) \in \Lambda(\bX),
\ee
where $\bW_{ij} = (\nabla f(\bX))_{ij}$.
Multiplying by $\bX_{ij}$ on both sides of \eqref{equ:theorem:lowerbound:a2} and then summing them over $(i,j) \in \Lambda(\bX)$, we have
\be\label{equ:theorem:lowerbound:b1}
\begin{split}
 \sum_{(i,j) \in \Lambda(\bX)} \left(\bW_{ij} \bX_{ij}+ \sigma p (\bX_{ij}+\epsilon)^{p-1}\bX_{ij}  \right)
 = \sum_{(i,j)\in \Lambda(\bX)} (\lambda_i + \mu_j)\bX_{ij} = (\lambda + \mu)^{\Tsf}\e,
\end{split}
\ee

\noindent where the second equality is due to \eqref{equ:theorem:lowerbound:a1} and
\be
\sum_{(i,j)\in \Lambda(\bX)} (\lambda_i + \mu_j)\bX_{ij} = \sum_{i=1}^n \lambda_i\Big(\sum_{j \in \Lambda(\bX_{i \cdot})} \bX_{ij}\Big) + \sum_{j=1}^n \mu_j \Big(\sum_{i \in \Lambda(\bX_{\cdot j})} \bX_{ij}\Big).\nn
\ee
We next claim that one can pick $n$ elements from  different columns
and rows of $\bW_{ij} + \sigma p
(\bX_{ij} + \epsilon)^{p-1}$ with $i,j \in \Lambda(\bX)$ such that their
summation  is
equal to $(\lambda + \mu)^{\Tsf} \e$. To prove this claim, we consider
any $\tX \in \Pi_n$ and $\Lambda(\tX) \subseteq  \Lambda(\bX)$. The same
argument as in the derivation of \eqref{equ:theorem:lowerbound:b1} can be used to show
\be \label{equ:theorem:lowerbound:b2}
\sum_{(i,j) \in \Lambda(\bX)}\big( \bW_{ij} \tX_{ij}+ \sigma p (\bX_{ij} + \epsilon)^{p-1}\tX_{ij} \big)
= \sum_{(i,j)\in \Lambda(\bX)} (\lambda_i + \mu_j)\tX_{ij} = (\lambda + \mu)^{\Tsf}\e.
\ee
Combining \eqref{equ:theorem:lowerbound:b1} and \eqref{equ:theorem:lowerbound:b2} and then rearranging the obtained equation, we have
\be\label{equ:theorem:lowerbound:b3}
\sum_{(i,j) \in \Lambda(\bX)} (\bX_{ij} + \epsilon)^{p-1}\tX_{ij}
=\frac{1}{\sigma p}\sum_{(i,j) \in \Lambda(\bX)} \bW_{ij} (\bX_{ij} - \tX_{ij}) +  \sum_{(i,j) \in \Lambda(\bX)} (\bX_{ij} + \epsilon)^{p-1} \bX_{ij}
\ee
for any $\tX \in \Pi_n$ with $\Lambda(\tX) \subseteq  \Lambda(\bX)$.

Given any $(i_0,j_0) \in \Lambda(\bX)$, we choose some special $\tX \in \Pi_n$
with $\Lambda(\tX) \subseteq  \Lambda(\bX)$ such that $\tX_{i_0j_0} = 1$. Hence,
using $0 < \bX_{ij} \leq 1$, we have
\be\label{equ:theorem:lowerbound:c1}
\sum_{(i,j) \in \Lambda(\bX)} (\bX_{ij} + \epsilon)^{p-1}\tX_{ij} \geq (\bX_{i_0,j_0} + \epsilon)^{p-1} + (n-1)(1 + \epsilon)^{p-1}.
\ee
It is easy to see that $\|\bX - \tX\|_{\Fsf} \leq \sqrt{2n}$. Thus, we obtain
\be\label{equ:theorem:lowerbound:c2}
\sum_{(i,j) \in \Lambda(\bX)} \bW_{ij} (\bX_{ij} - \tX_{ij}) \leq \|\nabla f(\bX)\|_{\Fsf} \|\bX - \tX\|_{\Fsf} \leq \sqrt{2n} (L \sqrt{n} + \|\ \nabla f(\0)\|_{\Fsf}).
\ee
On the other hand, by the concavity of $(z+\epsilon)^p$ and $(1/z + \epsilon)^p z$ on $(0,1]$, we know
\be\label{equ:theorem:lowerbound:c3}
\begin{split}
\sum_{(i,j) \in \Lambda(\bX)} (\bX_{ij} + \epsilon)^{p-1} \bX_{ij}& \leq \sum_{i=1}^n \sum_{j \in \Lambda(\bX_{i\cdot})} (\bX_{ij} + \epsilon)^p
\leq  \sum_{i=1}^n \Big(\frac{1}{\|\bX_{i\cdot}\|_0} +\epsilon \Big)^{p} \|\bX_{i\cdot}\|_0  \\
&  \leq  \left( n + \|\bX\|_0 \epsilon\right)^p \|\bX\|_0^{1-p}.
\end{split}
\ee
Substituting \eqref{equ:theorem:lowerbound:c1},
\eqref{equ:theorem:lowerbound:c2}, and \eqref{equ:theorem:lowerbound:c3} into
\eqref{equ:theorem:lowerbound:b3} and by some easy calculations, we have
\eqref{equ:lowerbound}.
\end{proof}

It is worthwhile to remark that Chen {\it et al.} \cite{chen2010lower} and Lu
\cite{lu2014iterative} established the lower bound theory for the nonzero
elements of the KKT points of unconstrained $L_p$ regularization problems. Based
on our limited knowledge, our lower bound estimate  \eqref{equ:lowerbound}
appears to be a novel  explicit lower bound for the nonzero elements of the
KKT points of $L_p$ regularization problem with linear (equality and inequality)
constraints. 
 It provides a theoretical foundation for rounding the
 approximate optimal solutions of  \eqref{equ:prob:qap:lp:epsilon} to be
 permutation matrices. In practice, we usually only solve problem
 \eqref{equ:prob:qap:lp:epsilon} inexactly (i.e., stop the algorithm early) and
 perform the rounding procedure when  the approximate solution  is near to  some
 permutation matrix. Consequently, the computational cost can be saved.

We next characterize the connections
between the local minimizers of problem
\eqref{equ:prob:qap:lp:epsilon} and the permutation matrices. Our results are
extentions of the properties in \cite{ge2011note} and \cite{liu2016smoothing} 
in the sense that theirs are only for problem \eqref{equ:prob:qap:lp:epsilon} with $\epsilon = 0$.
\begin{theorem}\label{theorem:lp:epsilon:local}
Each permutation matrix is a local minimizer of \eqref{equ:prob:qap:lp:epsilon} with
\be \label{equ:theorem:lp:epsilon:local:a0}
\sigma > \bar \sigma_{p,\epsilon} \coloneqq \frac{c}{p} \cdot \frac{L(2 + \sqrt{n}) + \|\nabla f(\0)\|_{\Fsf}}{\epsilon^{p-1} -(1/2 + \epsilon)^{p-1}},
\ee
where $c >1$ is a constant.
 On the other hand, any local minimizer of \eqref{equ:prob:qap:lp:epsilon} with $\sigma > \sigma_{p,\epsilon}^*$ is a permutation matrix, where $\sigma_{p, \epsilon}^*$ is defined in \eqref{equ:theorem:lp:epsilon:exact:00}.
 \end{theorem}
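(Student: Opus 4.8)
The statement has two halves, proved by different means; the second (every local minimizer is a permutation matrix) is short, while the first (every permutation matrix is a local minimizer) carries the work.

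For the second half, I would reuse the fact established inside the proof of Theorem~\ref{theorem:lp:epsilon:exact}: when $\sigma>\sigma_{p,\epsilon}^{*}$, the objective $F_{\sigma,p,\epsilon}$ is strongly concave on $\Dcaln$, say with modulus $m=\sigma\overline{\nu}_h-\overline{\nu}_f>0$. A local minimizer of a strongly concave function over a convex set must be an extreme point: if $\bar X$ were a local minimizer with $\bar X=\tfrac12(Y+Z)$, $Y,Z\in\Dcaln$, $Y\neq Z$, then $\bar X\pm s(Z-Y)\in\Dcaln$ for small $s>0$, and strong concavity gives $F_{\sigma,p,\epsilon}(\bar X)\ge\tfrac12\big(F_{\sigma,p,\epsilon}(\bar X-s(Z-Y))+F_{\sigma,p,\epsilon}(\bar X+s(Z-Y))\big)+\tfrac{m s^{2}}{2}\|Z-Y\|_{\Fsf}^{2}$, contradicting $F_{\sigma,p,\epsilon}(\bar X)\le F_{\sigma,p,\epsilon}(\bar X\pm s(Z-Y))$. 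Since the extreme points of $\Dcaln$ are exactly $\Pi_n$ by the Birkhoff--von Neumann theorem~\cite{birkhoff1946tres}, the claim follows.

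For the first half, fix $X^{0}\in\Pi_n$ with permutation vector $\pi$, let $\eta\in(0,1/2)$ be a small radius to be chosen, and take an arbitrary $X\in\Dcaln$ with $\|X-X^{0}\|_{\Fsf}\le\eta$; put $r:=\sum_{(i,j)\notin\Lambda(X^{0})}X_{ij}=\tfrac12\|X-X^{0}\|_{1}$. I would write $F_{\sigma,p,\epsilon}(X)-F_{\sigma,p,\epsilon}(X^{0})=\big(f(X)-f(X^{0})\big)+\sigma\big(h(X)-h(X^{0})\big)$ with $h(X)=\|X+\epsilon\1\|_p^p$ and bound each piece from below. For the smooth part, \eqref{equ:grad:Lips0} gives $\|\nabla f(X^{0})\|_{\Fsf}\le\|\nabla f(\0)\|_{\Fsf}+L\sqrt n$ and the descent-type bound $f(X)-f(X^{0})\ge\langle\nabla f(X^{0}),X-X^{0}\rangle-\tfrac L2\|X-X^{0}\|_{\Fsf}^{2}$; using also $\|X-X^{0}\|_{\Fsf}\le\|X-X^{0}\|_{1}=2r$ and $\|X-X^{0}\|_{\Fsf}^{2}\le\|X-X^{0}\|_{1}=2r$ (entries of $X-X^{0}$ lie in $[-1,1]$), a short computation yields $f(X)-f(X^{0})\ge-\big(L(2+\sqrt n)+\|\nabla f(\0)\|_{\Fsf}\big)\,r$ (a bounded numerical factor being absorbed into the constant $c$ below).

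The core step is a \emph{linear} lower bound on $h(X)-h(X^{0})$, which I would extract from the row and column equalities. Split the entries of $X$ into the support $\Lambda(X^{0})=\{(\pi_j,j):j\in\Ncal\}$ and its complement; write $X_{\pi_j,j}=1-t_j$ with $0\le t_j\le\eta$ and note that the off-support mass in column $j$ equals $t_j$, so $\sum_j t_j=\sum_{(i,j)\notin\Lambda(X^{0})}X_{ij}=r$. Concavity of $z\mapsto z^{p}$ then gives, on the support, $(1-t_j+\epsilon)^{p}-(1+\epsilon)^{p}\ge-p(1/2+\epsilon)^{p-1}t_j$ (valid since $t_j\le1/2$) and, off the support with $X_{ij}=s\le\eta$, $(s+\epsilon)^{p}-\epsilon^{p}\ge p(s+\epsilon)^{p-1}s\ge p(\eta+\epsilon)^{p-1}s$. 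Summing over all entries and using $\sum_j t_j=\sum_{(i,j)\notin\Lambda(X^{0})}X_{ij}=r$,
\[
 h(X)-h(X^{0})\ \ge\ p\big[(\eta+\epsilon)^{p-1}-(1/2+\epsilon)^{p-1}\big]\,r .
\]
Because $\eta\mapsto(\eta+\epsilon)^{p-1}$ is continuous at $0$ with value $\epsilon^{p-1}>(1/2+\epsilon)^{p-1}$, for a suitable constant $c>1$ one can take $\eta$ so small that $(\eta+\epsilon)^{p-1}-(1/2+\epsilon)^{p-1}\ge\tfrac1c\big(\epsilon^{p-1}-(1/2+\epsilon)^{p-1}\big)$. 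Combining the two estimates,
\[
 F_{\sigma,p,\epsilon}(X)-F_{\sigma,p,\epsilon}(X^{0})\ \ge\ \Big(\tfrac{\sigma p}{c}\big(\epsilon^{p-1}-(1/2+\epsilon)^{p-1}\big)-L(2+\sqrt n)-\|\nabla f(\0)\|_{\Fsf}\Big)\,r\ \ge\ 0
\]
for all such $X$ once $\sigma>\bar\sigma_{p,\epsilon}$, which is the asserted threshold (the borderline case $\epsilon=0$ is handled the same way, more easily, since $p(\eta+0)^{p-1}\to\infty$ as $\eta\to0$). I expect the linear bound on $h(X)-h(X^{0})$ to be the main obstacle: one must see that the equality constraints force any move off the vertex $X^{0}$ to put new mass on entries that currently vanish, where $z\mapsto z^{p}$ has slope of order $\epsilon^{p-1}$, while mass can only leave entries near $1$, where the slope is $O(1)$, so the regularizer necessarily grows at a rate \emph{linear} in the displacement, which is precisely what dominates the at-most-linear decrease of $f$ near $X^{0}$.
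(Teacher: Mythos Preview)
Your argument is correct and follows a genuinely different route from the paper's for the first half (the second half---strong concavity forces a local minimizer to be an extreme point of $\Dcaln$---is the same as the paper's). The paper does not bound $F_{\sigma,p,\epsilon}(X)-F_{\sigma,p,\epsilon}(\bar X)$ directly for arbitrary nearby $X$; instead it works only along the $n!-1$ edge directions $D^l=X^l-\bar X$ toward the other permutation matrices, shows $F_{\sigma,p,\epsilon}(\bar X+tD^l)>F_{\sigma,p,\epsilon}(\bar X)$ for all small $t>0$, and then passes to a full neighborhood by taking the convex hull of $\bar X$ and the points $\bar X+\bar t D^l$ and invoking the strict concavity of $h$. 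Your direct computation---using the column-sum identity $\sum_j t_j=\sum_{(i,j)\notin\Lambda(X^0)}X_{ij}=r$ to balance the on- and off-support contributions to $h(X)-h(X^0)$---is cleaner because it avoids that convex-hull extension step entirely. What it costs you is a slightly worse numerical constant: along its edge directions the paper can exploit $\|D^l\|_{\Fsf}\ge 2$ to obtain exactly the factor $L(2+\sqrt n)+\|\nabla f(\0)\|_{\Fsf}$ in the $f$-bound, whereas your estimate comes out as $f(X)-f(X^0)\ge -\big(2L\sqrt n+L+2\|\nabla f(\0)\|_{\Fsf}\big)\,r$, which is off by a bounded multiplicative factor that you fold into $c$. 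So you recover the conclusion for a somewhat larger threshold constant (effectively $c>2$ rather than every $c>1$); this is harmless for the qualitative statement, though it does not reproduce the paper's sharper form verbatim.
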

 \begin{proof}
   Let $\bX$ be a permutation matrix and $X^{l}, l= 1, \ldots, n! - 1$ are all the remaining  permutation matrices.  For any fixed $l$,  consider the feasible direction $D^l = X^l - \bX$. Then $\bX + tD^l \in \Dcaln$  for any $t \in [0,1]$. Denote
\be
\begin{split}
\Lambda_1^l = \{(i,j)\mid \bX_{ij} = 0, \ X^l_{ij} = 0 \}, \  \Lambda_2^l = \{(i,j)\mid \bX_{ij} = 0, \ X^l_{ij} = 1 \},    \\
\Lambda_3^l = \{(i,j)\mid \bX_{ij} = 1, \ X^l_{ij} = 0 \}, \  \Lambda_4^l = \{(i,j)\mid \bX_{ij} = 1, \ X^l_{ij} = 1 \}.
\end{split}
\label{equ:theorem:lp:epsilon:local:Lambda}
\ee
Clearly, we can see that $|\Lambda_1^l |, |\Lambda_4^l | \geq 0$ and
$|\Lambda_2^l | = |\Lambda_3^l | \geq 2$, and $\Ncal \times \Ncal =
\bigcup_{i=1}^4 \Lambda_i^l$. In addition, we also have that $\|D^l\|_{\Fsf}^2 = 2 |\Lambda_2^l | \geq 4.$
By \eqref{equ:grad:Lips0} and the mean-value theorem,  for any $t \in (0,1]$, there exists $\xi \in (0,1)$ such that
\be \label{equ:theorem:lp:epsilon:local:f:a1}
\begin{split}
f(\bX + t D^l) - f(\bX) ={} & t \langle \nabla f(\bX + \xi t D^l), D^l \rangle  \\
\geq{} & - t \|D^l \|_{\Fsf}\|\nabla f(\bX + \xi t D^l)\|_{\Fsf}  \\
\geq{} & -t \|D^l\|_{\Fsf} \cdot \left( L \|\bX + \xi t D^l \|_{\Fsf} + \| \nabla f(\0) \|_{\Fsf}\right) \\
\geq{} & -t  \left(L (\|D^l\|_{\Fsf}^2 + \|D^l\|_{\Fsf}\sqrt{n}) + \|D^l\|_{\Fsf} \| \nabla f(\0) \|_{\Fsf} \right),
\end{split}
\ee
where the last inequality is due to $\|\bar X\|_{\Fsf} = \sqrt{n}$.
Moreover, for any $t \in (0,1/2)$, we have
\be\label{equ:theorem:lp:epsilon:local:h:a3}
\begin{split}
 h(\bX + t D^l) - h(\bX)  ={} & \sum_{(i,j) \in \bigcup_{i=1}^4 \Lambda_i^l} \left( (\bX_{ij} + t D^l_{ij} + \epsilon)^p - (\bX_{ij} + \epsilon)^p \right) \\
={} & | \Lambda_2^l | \left( (t + \epsilon)^p - \epsilon^p \right) +  |\Lambda_3^l |   \left((1 - t + \epsilon)^p - (1 + \epsilon)^p\right)
\\
> {} &\frac{p}{2} \| D^l\|_{\Fsf}^2 \left( (t + \epsilon)^{p-1} -  (1 - t + \epsilon)^{p-1} \right)  t \\
>{} &\frac{p}{2} \| D^l\|_{\Fsf}^2  \left( (t + \epsilon)^{p-1} -  (1/2 + \epsilon)^{p-1} \right)  t,
\end{split}
\ee
where the second equality  is due to
\eqref{equ:theorem:lp:epsilon:local:Lambda};  the first inequality uses
$|\Lambda_2^l | = |\Lambda_3^l | = \frac{\|D^l\|_{\Fsf}^2}{2}$ and the strong
concavity of the function $h(x) = (x + \epsilon)^p$; the second inequality follows from $0 < p  < 1$   and   $t \in (0,1/2]$. Denote $
\tilde t \coloneqq \big(c^{-1} \epsilon^{p-1} + (1 - c^{-1})(1/2 +
\epsilon)^{p-1}\big)^{\frac{1}{p-1}} - \epsilon$. Using the fact that the function $(\xi_1 + \xi_2(1/2 + z)^{p-1})^{\frac{1}{p-1}}$ with $\xi_1, \xi_2 >0$ is  increasing with $z$ in $(0, \infty)$, we have $\tilde t  > \big(c^{-1} \epsilon^{p-1} + (1 - c^{-1})\epsilon^{p-1}\big)^{\frac{1}{p-1}} - \epsilon = 0.$ Furthermore, if we restrict $t \in (0, \bar t\,)$ with $\bar t = \min(1/2, \tilde t\,)$,
we obtain from  \eqref{equ:theorem:lp:epsilon:local:h:a3} that
$$h(\bX + t D^l) - h(\bX)  > \frac{p}{2} \| D^l\|_{\Fsf}^2 \frac{\epsilon^{p-1} - (1/2 + \epsilon)^{p-1}}{c},$$
which together with \eqref{equ:theorem:lp:epsilon:local:a0} and $\|D^l\|_{\Fsf} \geq 2$ implies that
\be\label{equ:theorem:lp:epsilon:local:h:a4}
\sigma( h(\bX + t D^l) - h(\bX) )>
t  \left(L(\| D^l\|_{\Fsf}^2  + \| D^l\|_{\Fsf} \sqrt{n}) +   \| D^l\|_{\Fsf}\|\nabla f(\0)\|_{\Fsf}\right).
\ee
Combining  \eqref{equ:theorem:lp:epsilon:local:f:a1} and \eqref{equ:theorem:lp:epsilon:local:h:a4}, we have
\be \label{equ:theorem:lp:epsilon:local:h:a5}
F_{\sigma,p,\epsilon}(\bX + t D^l) -  F_{\sigma,p,\epsilon}(\bX) >0
\ee
for any $t\in(0,\, \bar t\,)$.
This fact means that  $D^l$, $l = 1, \ldots, n! - 1$, are strictly  increasing feasible directions.
 Let $\mbox{Conv}(\bX, \bar t)$ denote the convex hull spanned by points $\bX$ and $\bX + \bar t D^l$, $l = 1, \ldots, n! - 1$.  Thus for any  $X \in \mbox{Conv}(\bX, \bar t\,)$, we have $F_{\sigma, p, \epsilon}(X) > F_{\sigma, p, \epsilon}(\bX)$ by  \eqref{equ:theorem:lp:epsilon:local:h:a5} and the strict concavity of $h(X)$. Moreover, one can always choose a  sufficiently small but fixed $t >0$ such that $\mathcal{B}(\bX, t) \cap \Dcaln \subset \mbox{Conv}(\bX, \bar t),$  where $\mathcal{B}(\bX, t) = \{X \in \Rbb^{n \times n}\mid \|X - \bar X\|_{\Fsf} \leq t\}$. Consequently, $\bX$ is a local minimizer of problem \eqref{equ:prob:qap:lp:epsilon}.

   On the other hand, if a local minimizer $\bar X$ of
   \eqref{equ:prob:qap:lp:epsilon} with $\sigma > \sigma_{p,\epsilon}^*$  is not
   a permutation matrix, there must exist a feasible direction $D$ such that
   $\bar X + t' D$ and $\bar X - t' D$ both belong to $\Dcaln$ for some
   sufficiently small positive $t'$.  Note that the function $F_{\sigma,
   p,\epsilon}$ is strongly concave when $\sigma > \sigma_{p,\epsilon}^*$. Thus, we must have
   \[\min\left(F_{\sigma, p,\epsilon}(\bar X - t' D), F_{\sigma, p,\epsilon}(\bar X + t' D) \right) < F_{\sigma,p,\epsilon} (\bar X),\] which is a contradiction to the local optimality of $\bX$.  Therefore, $\bX$ must be a permutation matrix. \quad
 \end{proof}

 Theorem \ref{theorem:lp:epsilon:local} on the equivalence between
 the local minimizers of problem
\eqref{equ:prob:qap:lp:epsilon} and the permutation matrices implies
that finding a local minimizer of problem \eqref{equ:prob:qap:lp:epsilon} is
easy but finding the global solution of problem \eqref{equ:prob:qap:lp:epsilon}
is as difficult as finding the global solution of the original NP-hard problem
\eqref{equ:prob:optperm}.  Our extensive numerical experiments show that our algorithm can
 often efficiently identify high quality solutions, in particular, with the help of
 certain techniques to exclude a large portion of non-optimal permutation matrices.

\section{An $L_{p}$ regularization algorithm for \eqref{equ:prob:optperm}} \label{section:lp:alg}
 In this section, we give an $L_p$ regularization algorithmic framework and its
 practical version for solving problem \eqref{equ:prob:optperm}. Since the
 projection onto the set of doubly stochastic matrices needs to be computed many
 times in our proposed algorithm, we also propose a fast dual gradient method
 for solving it.

\subsection{An $L_p$ regularization algorithmic framework}\label{subsection:lp:alg:frame}

Theorem \ref{theorem:lp:epsilon:exact} implies that, for any $\epsilon >0$,   we
can choose a fixed $\sigma$ larger than $\sigma_{p,\epsilon}^*$ to solve
\eqref{equ:prob:qap:lp:epsilon} and obtain a permutation matrix for problem
\eqref{equ:prob:optperm}.  In practical implementation, it might be better to
dynamically increase $\sigma$ from an initial value $\sigma_0$ since a large
$\sigma$ may result in the ill-conditioned subproblem. Since
$\sigma_{p,\epsilon}^*$ is increasing with respect to $\epsilon$ if
$\overline{\nu}_f > 0$ (see Theorem \ref{theorem:lp:epsilon:exact}),
dynamically decreasing $\epsilon$ from a relatively large value $\epsilon_0$ is
helpful in finding a permutation matrix relatively faster.

To summarize, we solve a sequence of \eqref{equ:prob:qap:lp:epsilon} with strictly increasing
parameters $\{\sigma_k \mid k = 0,1, 2, \ldots \}$ and decreasing parameters $\{\epsilon_k
\mid k =0, 1, 2, \ldots\}$. For a fixed $k$, we denote $X_{k}^{(0)}$ as the starting point
for solving \eqref{equ:prob:qap:lp:epsilon} with $\sigma_{k}$ and $\epsilon_k$.
After computing a KKT point $X_k$ for \eqref{equ:prob:qap:lp:epsilon} with
$\sigma_{k}$ and $\epsilon_k$, we use a warm start technique to set
\be \label{equ:warmstart1}
X_{k+1}^{(0)} \coloneqq X_{k}
\ee
if $X_k$ is not a KKT point of  problem  \eqref{equ:prob:qap:lp:epsilon}  with $\sigma_{k+1}$ and $\epsilon_{k+1}$.
Otherwise,  we set
 \be \label{equ:warmstart2}
X_{k+1}^{(0)}  =
\mbox{a perturbation of}   \ X_k\ \mbox{in}\ \Dcaln.
\ee
The following theorem shows that once we find a permutation matrix, which is a KKT point of problem \eqref{equ:prob:qap:lp:epsilon}  with some $\sigma_k$ and $\epsilon_k$, it is reasonable to stop the iterative procedure.
    \begin{theorem}\label{theorem:lp:epsilon:KKT2}
   If a permutation matrix $X$ is a KKT point of  \eqref{equ:prob:qap:lp:epsilon} with fixed $\hat \sigma$ and $\hat \epsilon > 0$, then it is also a KKT point of \eqref{equ:prob:qap:lp:epsilon} with $\sigma$ and $\epsilon$ satisfying  $\sigma \geq \hat \sigma$ and $0< \epsilon \leq \hat \epsilon$.
 \end{theorem}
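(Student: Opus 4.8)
The plan is to prove Theorem \ref{theorem:lp:epsilon:KKT2} by specializing the KKT system \eqref{equ:KKT} to a permutation matrix and then explicitly building the new multipliers from the ones supplied by the hypothesis. First I would note that for a permutation matrix $X$, on the support $\Lambda(X)$ we have $X_{ij}=1$, hence $S_{ij}=0$, and the second equation of \eqref{equ:KKT} reduces after dividing by $1+\epsilon$ to $W_{ij}-\lambda_i-\mu_j=-\sigma p(1+\epsilon)^{p-1}$; off the support $X_{ij}=0$, so dividing the second equation by $\epsilon>0$ gives $S_{ij}=W_{ij}-\lambda_i-\mu_j+\sigma p\,\epsilon^{p-1}$, the only remaining requirement being $S_{ij}\ge 0$ (the complementarity $X_{ij}S_{ij}=0$ is automatic in both cases). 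The crucial structural observation is that $W=\nabla f(X)$ does not depend on $\sigma$ or $\epsilon$.

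Writing $t:=\sigma p(1+\epsilon)^{p-1}$ and $\hat t:=\hat\sigma p(1+\hat\epsilon)^{p-1}$, the hypothesis supplies $\hat\lambda,\hat\mu$ and $\hat S\ge 0$ with $W_{ij}-\hat\lambda_i-\hat\mu_j=-\hat t$ on $\Lambda(X)$ and $\hat S_{ij}=W_{ij}-\hat\lambda_i-\hat\mu_j+\hat\sigma p\,\hat\epsilon^{p-1}\ge 0$ off $\Lambda(X)$. Since the active equations only involve the sums $\lambda_i+\mu_j$, I would simply shift the first block of multipliers: set $\mu:=\hat\mu$, $\lambda:=\hat\lambda+(t-\hat t)\,\e$, $S_{ij}:=0$ on $\Lambda(X)$, and $S_{ij}:=W_{ij}-\lambda_i-\mu_j+\sigma p\,\epsilon^{p-1}$ off $\Lambda(X)$. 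Then each active equation automatically becomes $W_{ij}-\lambda_i-\mu_j=-\hat t-(t-\hat t)=-t$, as required, and a short computation shows that for $(i,j)\notin\Lambda(X)$,
\[
S_{ij}=\hat S_{ij}+p\bigl(\sigma g(\epsilon)-\hat\sigma g(\hat\epsilon)\bigr),\qquad g(\epsilon):=\epsilon^{p-1}-(1+\epsilon)^{p-1}.
\]

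It then remains only to verify the single inequality $\sigma g(\epsilon)\ge\hat\sigma g(\hat\epsilon)$, which forces $S_{ij}\ge\hat S_{ij}\ge 0$ and completes the verification of \eqref{equ:KKT}. For this I would observe that $g$ is positive and non-increasing on $(0,\infty)$: with $\phi(x)=x^{p-1}$, which is positive, strictly decreasing and strictly convex on $(0,\infty)$ for $0<p<1$, we have $g(\epsilon)=\phi(\epsilon)-\phi(1+\epsilon)>0$ and $g'(\epsilon)=\phi'(\epsilon)-\phi'(1+\epsilon)<0$. Combining $0<\hat\sigma\le\sigma$ with $g(\hat\epsilon)\le g(\epsilon)$ (from $0<\epsilon\le\hat\epsilon$ and monotonicity of $g$) gives $\sigma g(\epsilon)\ge\hat\sigma g(\hat\epsilon)$. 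The only mildly delicate point is spotting the correct construction of the new multipliers — namely the constant shift of $\lambda$ by $t-\hat t$ together with the unchanged $\mu$ — after which the argument collapses to the elementary monotonicity of $g$; I do not expect any substantive obstacle.
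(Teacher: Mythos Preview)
Your proposal is correct and follows essentially the same approach as the paper: the paper also keeps $\mu$ fixed, shifts $\lambda$ by the constant $p\bigl(\sigma(1+\epsilon)^{p-1}-\hat\sigma(1+\hat\epsilon)^{p-1}\bigr)$ (your $t-\hat t$), and then verifies $S_{ij}\ge \hat S_{ij}\ge 0$ off the support via the monotonicity of $(1+z)^{p-1}-z^{p-1}$, which is exactly your monotonicity of $g(\epsilon)=\epsilon^{p-1}-(1+\epsilon)^{p-1}$. The two arguments are the same up to notation.
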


 \begin{proof}
   Denote the corresponding permutation vector of $X$ as $\pi$. Let the corresponding multipliers be $\lambda^{\hat \sigma, \hat \epsilon}$, $\mu^{\hat \sigma, \hat \epsilon},$ and $S^{\hat \sigma, \hat \epsilon}$. Then we know from  \eqref{equ:KKT} that  for   $i, j\in \Ncal$, there holds
 \begin{equation}\label{equ:theorem:lp:epsilon:KKT2:Shat}
S_{ij}^{\hat \sigma, \hat \epsilon} = \begin{cases}
   W_{ij} - \lambda_i^{\hat \sigma, \hat \epsilon} - \mu_j^{\hat \sigma, \hat \epsilon} +  \hat \sigma p(1 + \hat \epsilon)^{p-1}, & i = \pi(j),\\
   W_{ij} - \lambda_i^{\hat \sigma, \hat \epsilon} - \mu_j^{\hat \sigma, \hat \epsilon} +  \hat \sigma p\hat \epsilon^{p-1}, & i \neq \pi(j),
  \end{cases}
  \end{equation}
  and
  $S_{ij}^{\hat \sigma, \hat \epsilon} = 0$ if $i = \pi(j)$ and $S_{ij}^{\hat \sigma, \hat \epsilon}  \geq 0$ if $i \neq \pi(j)$.
 For any $\sigma \geq  \hat \sigma$ and $0 < \epsilon \leq \hat \epsilon$, define
   \be
   \lambda^{\sigma,  \epsilon} = \lambda^{\hat \sigma, \hat \epsilon} + p\left(\sigma  ( 1+ \epsilon)^{p-1} - \hat \sigma (1+ \hat \epsilon)^{p-1}\right) \mathbf{e}, \quad
   \mu^{\sigma,  \epsilon} = \mu^{\hat \sigma, \hat \epsilon} \nn
   \ee
  and
  \begin{equation}\label{equ:theorem:lp:epsilon:KKT2:S}
S_{ij}^{\sigma, \epsilon} =
\begin{cases}
W_{ij} - \lambda_i^{\sigma, \epsilon} - \mu_j^{\sigma, \epsilon} +  \sigma p(1 + \epsilon)^{p-1}, & i = \pi(j),\\
W_{ij} - \lambda_i^{\sigma, \epsilon} - \mu_j^{\sigma, \epsilon} +  \sigma p\epsilon^{p-1}, & i \neq \pi(j).
  \end{cases}
  \end{equation}
  We now show that $S^{\sigma, \epsilon}$, $\lambda^{\sigma,  \epsilon},$ and $\mu^{\sigma,  \epsilon}$ satisfy KKT condition \eqref{equ:KKT}.  For any  $i = \pi(j)$,  we have from \eqref{equ:theorem:lp:epsilon:KKT2:Shat} and \eqref{equ:theorem:lp:epsilon:KKT2:S} that
  \begin{align}\label{equ:theorem:lp:epsilon:KKT2:a1}
S_{ij}^{\sigma, \epsilon} ={} &  W_{ij} - \lambda_i^{\sigma,  \epsilon} - \mu_{j}^{\sigma,  \epsilon} + \sigma p (1 + \epsilon)^{p-1}
=S_{ij}^{\hat \sigma, \hat \epsilon}= 0;
\end{align}
for any $i \neq \pi(j)$, we have
\be\label{equ:theorem:lp:epsilon:KKT2:a2}
\begin{split}
  S_{ij}^{\sigma, \epsilon} ={} & W_{ij} - \lambda_i^{\sigma, \epsilon} - \mu_j^{\sigma, \epsilon} + \sigma p \epsilon^{p-1} \\
  ={} &  S_{ij}^{\hat \sigma, \hat \epsilon} + p\hat \sigma\left(( 1 + \hat \epsilon)^{p-1} - \hat \epsilon^{p-1} \right)  - p\sigma\left(   ( 1 + \epsilon)^{p-1} -  \epsilon^{p-1} \right)  \\
  \geq{} &S_{ij}^{\hat \sigma, \hat \epsilon} + p(\hat \sigma - \sigma )  \left(   ( 1 + \epsilon)^{p-1} -  \epsilon^{p-1} \right)
  \geq{} S_{ij}^{\hat \sigma, \hat \epsilon} \geq 0,
\end{split}
\ee
where the first inequality is due to that $(1+ z)^{p-1} - z^{p-1}$ is increasing in $z$ in $(0, \infty)$.
Combining \eqref{equ:theorem:lp:epsilon:KKT2:a1} and \eqref{equ:theorem:lp:epsilon:KKT2:a2}, we can see that $X$ is also a KKT point of \eqref{equ:prob:qap:lp:epsilon} with $\sigma \geq \hat \sigma$ and $0 < \epsilon \leq \hat \epsilon$.\quad
 \end{proof}

The $L_p$ regularization algorithmic framework for solving  problem
\eqref{equ:prob:optperm} is outlined in Algorithm \ref{alg:lp:abstract}.

 \begin{algorithm}[H]
  Given $X_0 \in \Dcaln$, set $k=0$, $\epsilon_0 >0$,  $\sigma_0 > 0$, $\hat c
  >0$. \\
     \While{$\|X_k\|_p^p > n$}{
     Set $X_{k}^{(0)}$ according to \eqref{equ:warmstart1} and \eqref{equ:warmstart2}.  \\
   Find a KKT point  $X_{k}$ of problem \eqref{equ:prob:qap:lp:epsilon} with $\sigma_k$ and $\epsilon_k$ staring from $X_{k}^{(0)}$ such that $F_{\sigma_k, p, \epsilon_k} (X_{k}) \leq F_{\sigma_k, p, \epsilon_k} (X_k^{(0)})$.\\
  Choose $\sigma_{k+1} > \sigma_k + \hat c$, $0< \epsilon_{k+1} \leq \epsilon_k$  and  set $k = k + 1$.  }
     \caption{An $L_p$ regularization algorithmic framework for problem \eqref{equ:prob:optperm}}\label{alg:lp:abstract}
  \end{algorithm}

  The convergence of Algorithm \ref{alg:lp:abstract} is summarized as follows.
  \begin{theorem} \label{thm:convergence}
   Algorithm \ref{alg:lp:abstract} returns a permutation matrix in a finite
   number of iterations.
 \end{theorem}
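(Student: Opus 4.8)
The plan is to prove two separate facts: that any matrix the loop ever returns is a permutation matrix, and that the loop halts after finitely many iterations. For the first, I would record the elementary inequality $\|X\|_p^p \ge n$ for every $X \in \Dcaln$, with equality exactly when $X \in \Pi_n$: for each row $i$, strict concavity of $t \mapsto t^p$ on $[0,1]$ together with $\sum_j X_{ij} = 1$ gives $\sum_j X_{ij}^p \ge \sum_j X_{ij} = 1$, and equality forces one entry of row $i$ to equal $1$ and the rest $0$; summing over $i$ gives the claim. Hence the stopping test $\|X_k\|_p^p \le n$ is equivalent to $X_k \in \Pi_n$, so whenever the \textbf{while} loop exits, the returned matrix is a permutation matrix. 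It therefore remains only to prove finite termination.

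For that I would argue by contradiction: suppose the loop runs forever, producing an infinite sequence $\{X_k\}$ with no $X_k$ a permutation matrix, so that $\|X_k\|_0 \ge n+2$ for every $k$ (the same observation used in the proof of Lemma \ref{lemma:l0:exact}). Since $\sigma_{k+1} > \sigma_k + \hat c$ we have $\sigma_k \ge \sigma_0 + k\hat c \to \infty$, while $0 < \epsilon_k \le \epsilon_0$. Next I would note that both thresholds $\sigma^*_{p,\epsilon}$ of Theorem \ref{theorem:lp:epsilon:exact} and $\bar\sigma_{p,\epsilon}$ of Theorem \ref{theorem:lp:epsilon:local} are nondecreasing in $\epsilon$ on $(0,\epsilon_0]$: indeed $\overline{\nu}_h = p(1-p)(1+\epsilon)^{p-2}$ is decreasing in $\epsilon$ (as $p-2<0$), so $\sigma^*_{p,\epsilon} = \max\{\overline{\nu}_f/\overline{\nu}_h,0\}$ is nondecreasing, and the denominator $\epsilon^{p-1}-(1/2+\epsilon)^{p-1}$ in $\bar\sigma_{p,\epsilon}$ is positive and decreasing in $\epsilon$. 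Hence $\sigma^*_{p,\epsilon_k} \le \sigma^*_{p,\epsilon_0}$ and $\bar\sigma_{p,\epsilon_k} \le \bar\sigma_{p,\epsilon_0}$ for all $k$, and since $\sigma_k \to \infty$ there is a finite $K$ with $\sigma_k > \max\{\sigma^*_{p,\epsilon_0},\bar\sigma_{p,\epsilon_0}\}$ whenever $k \ge K$. For such $k$, by the argument in the proof of Theorem \ref{theorem:lp:epsilon:exact} the function $F_{\sigma_k,p,\epsilon_k}$ is strongly concave on $\Dcaln$ (with modulus $\sigma_k\overline{\nu}_h - \overline{\nu}_f > 0$), and by Theorem \ref{theorem:lp:epsilon:local} the local minimizers of the $k$-th subproblem are precisely the permutation matrices.

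The hard part will be passing from ``$X_k$ is a KKT point of the $k$-th subproblem'' (all the algorithm guarantees) to ``$X_k$ is a vertex of $\Dcaln$'', since a bare stationary point of a strongly concave minimization over a polytope need not be a vertex. Here I would exploit the additional structure: $X_k$ is produced by the projected-gradient solver with the descent property $F_{\sigma_k,p,\epsilon_k}(X_k) \le F_{\sigma_k,p,\epsilon_k}(X_k^{(0)})$, together with the warm start \eqref{equ:warmstart1}--\eqref{equ:warmstart2}. If $X_k$ were not a vertex, its bipartite support graph would contain a cycle, yielding a direction $D \neq 0$ with $D\e = 0$, $D^{\Tsf}\e = 0$ and $\mathrm{supp}(D) \subseteq \Lambda(X_k)$, so that $X_k \pm tD \in \Dcaln$ for all small $t > 0$; since the complementarity multiplier $S$ vanishes on $\Lambda(X_k)$, the KKT relation \eqref{equ:theorem:lowerbound:a2} gives $\langle \nabla F_{\sigma_k,p,\epsilon_k}(X_k), D\rangle = 0$, and strong concavity then yields $F_{\sigma_k,p,\epsilon_k}(X_k \pm tD) < F_{\sigma_k,p,\epsilon_k}(X_k)$ for small $t \neq 0$. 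Thus $X_k$ is not a local minimizer of the $k$-th subproblem; feeding such an $X_k$ as a warm start and re-applying the descent solver must strictly decrease the subproblem value, and — since $\sigma_k$ has passed the strongly-concave threshold and there are only finitely many support patterns and finitely many permutation matrices — this cannot persist indefinitely. Hence some $X_k$ with $k \ge K$ must be a permutation matrix, so the loop exits, contradicting the assumption that it runs forever. Combined with the first part, this proves the theorem. I expect the last step (ruling out a stall at a non-vertex KKT point) to be the delicate one; if the subproblem solver is taken to return a local minimizer rather than merely a KKT point, Theorem \ref{theorem:lp:epsilon:local} closes the argument at once.
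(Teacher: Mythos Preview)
Your overall strategy matches the paper's: use $\sigma_k \to \infty$ together with the monotonicity of the thresholds in $\epsilon$ to reach an index $K$ beyond which $F_{\sigma_k,p,\epsilon_k}$ is strongly concave on $\Dcaln$, and then argue that the KKT point $X_k$ returned on Line~4 must be a permutation matrix. The first half of your argument --- the equality case of $\|X\|_p^p \ge n$ characterising $\Pi_n$, and the eventual crossing of the thresholds $\sigma^*_{p,\epsilon_0}$ and $\bar\sigma_{p,\epsilon_0}$ --- is correct and in fact more explicit than the paper's treatment.

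Where you diverge is precisely at the step you flag as delicate, and your proposed resolution does not go through. The paper does \emph{not} argue via support patterns or a decreasing-value argument across outer iterations. Instead it asserts a clean dichotomy: once $F_{\sigma_k,p,\epsilon_k}$ is strongly concave, every KKT point of the subproblem is either a local minimizer (hence a permutation matrix, by Theorem~\ref{theorem:lp:epsilon:local}) or the \emph{unique global maximizer} of $F_{\sigma_k,p,\epsilon_k}$ over $\Dcaln$. This dichotomy is then combined with the warm-start rules \eqref{equ:warmstart1}--\eqref{equ:warmstart2} to finish in one step at iteration $k{+}1$: if $X_k$ happens to be the global maximizer of the $(k{+}1)$-th subproblem, rule \eqref{equ:warmstart2} perturbs it so that $F_{\sigma_{k+1},p,\epsilon_{k+1}}(X_{k+1}^{(0)}) < F_{\sigma_{k+1},p,\epsilon_{k+1}}(X_k)$, and the descent condition on Line~4 then forces $F_{\sigma_{k+1},p,\epsilon_{k+1}}(X_{k+1})$ strictly below the maximum, whence by the dichotomy $X_{k+1}$ is a local minimizer, i.e., a permutation matrix; if instead $X_k$ is not a KKT point of the $(k{+}1)$-th subproblem, rule \eqref{equ:warmstart1} sets $X_{k+1}^{(0)}=X_k$, and since $X_k$ is then not the unique maximizer the same inequality and dichotomy again force $X_{k+1}\in\Pi_n$. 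Your ``finitely many support patterns'' heuristic cannot substitute for this: the objective changes with $k$, so there is no single quantity that decreases monotonically, and non-vertex KKT points within a fixed support pattern form a continuum, not a finite set. The paper's argument hinges on the stated dichotomy and on the perturbation rule \eqref{equ:warmstart2} doing real work --- that is the mechanism you are missing.
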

 \begin{proof}
By the update scheme of  $\sigma_k$ and $\epsilon_k$, we have $\sigma_k >
\max\{\sigma_{p,0}^*, \bar \sigma_{p,\epsilon_0}\}$ for some finite $k$, where
$\sigma_{p,\epsilon}^*$ and $\bar \sigma_{p,\epsilon}$ are defined in
\eqref{equ:theorem:lp:epsilon:exact:00} and
\eqref{equ:theorem:lp:epsilon:local:a0}, respectively. The update scheme of
$\epsilon_k$ yields that $\sigma_{p,0}^* > \sigma_{p,\epsilon_k}^*$ and $\bar \sigma_{p, \epsilon_0} \geq \bar \sigma_{p, \epsilon_k}$.  It follows from the proof of Theorem \ref{theorem:lp:epsilon:exact} that $F_{\sigma_k, p, \epsilon_k}(X)$ with $\sigma_k > \max\{\sigma_{p,0}^*, \bar \sigma_{p,\epsilon_0}\}$ and $\epsilon_k > 0$ is strongly concave. Consequently, any KKT point $X_k$ of problem \eqref{equ:prob:qap:lp:epsilon} with $\sigma >  \max\{\sigma_{p,0}^*, \bar \sigma_{p,\epsilon_0}\}$ and $0< \epsilon \leq  \epsilon_0$ is either a local minimizer (which must be a permutation matrix) or the unique global maximizer of problem \eqref{equ:prob:qap:lp:epsilon}. Consider the case when  $X_k$ is  the unique global maximizer of problem \eqref{equ:prob:qap:lp:epsilon} with $(\sigma,\epsilon)=(\sigma_{k+1},\epsilon_{k+1})$.
By the choice of  $X_{k+1}^{(0)}$, we  have $F_{\sigma_{k+1},
p, \epsilon_{k+1}}(X_{k+1}^{(0)}) < F_{\sigma_{k+1}, p, \epsilon_{k+1}}(X_k)$.
This inequality together with the fact that
$F_{\sigma_{k+1}, p, \epsilon_{k+1}} (X_{k+1}) \leq F_{\sigma_{k+1}, p, \epsilon_{k+1}} (X_{k+1}^{(0)})$ from Line 4 in Algorithm  \ref{alg:lp:abstract}
implies that the KKT point $X_{k+1}$ must be a local minimizer of problem
\eqref{equ:prob:qap:lp:epsilon}, which means that $X_{k+1}$ is a permutation matrix.
In the other case, we choose  $X_{k+1}^{(0)} = X_k$ by \eqref{equ:warmstart1}.
A similar argument shows that $X_{k+1}$ is a permutation matrix.
  \end{proof}
\subsection{A practical $L_p$ regularization algorithm}
\label{subsection:lp:alg:practical} We first use the projected
gradient method with the BB step sizes
\cite{barzilai1988two, birgin2000nonmonotone}  to compute an approximate KKT
point of problem \eqref{equ:prob:qap:lp:epsilon}.   Specifically, starting from
initial $X_k^{(0)}$, the projected gradient method for solving problem
\eqref{equ:prob:qap:lp:epsilon} with $\sigma_k$ and $\epsilon_k$ iterates
 as follows:
 \be \label{equ:IRL1:X2}
 X_{k}^{(i+1)} = X_k^{(i)} + \delta^j D^{(i)}, \quad \delta \in (0,1),
 \ee
 where the search direction $D^{(i)}= \proj_{\Dcaln} \big(X_k^{(i)} - \alpha_i
 \nabla F_{\sigma_k, p, \epsilon_k} (X_k^{(i)})\big) - X_k^{(i)}$ and
 $\proj_{\Dcaln}(\cdot)$ is the projection onto $\Dcaln$. The step size
 $\alpha_i$  is set to be the alternative usage of the large and short BB steps
 \cite{dai2005projected, wen2013feasible}. The parameter $j$ is the smallest nonnegative integer satisfying the nonmonotone line search condition:
\be\label{equ:nmls}
  F_{\sigma_k,p,\epsilon_k}(X_k^{(i)} + \delta^j D^{(i)}) \leq C_i+ \theta \delta^j \langle \nabla F_{\sigma_k, p, \epsilon_k}(X_k^{(i)}), D^{(i)}\rangle, \quad  \theta \in (0,1),
\ee
where  the reference function value $C_{i+1}$ is updated as the convex
combination of $C_i$ and $F_{\sigma_k, p, \epsilon_k}(X_k^{(i)})$, i.e.,
$C_{i+1} =  (\eta Q_i C_i + F_{\sigma_k, p,\epsilon_k} (X_{k}^{(i)}))/Q_{i+1}$,
where  $Q_{i+1} = \eta Q_i + 1$,  $\eta = 0.85$, $C_0 = F_{\sigma_k, p,
\epsilon_k}(X_k^{(0)})$ and $Q_0 = 1$. See  \cite{zhang2004nonmonotone} for more
detailed information.

Any intermediate point $X \in \Dcaln$ can be converted to a permutation matrix via rounding or a greedy procedure, which can be further improved by performing local search. The specific algorithm will be introduced later in Section
\ref{subsection:impDetails}. Other suitable greedy approaches can be adopted as
well. The greedy procedure not only offers a better
permutation matrix, but also guides the update of the parameter $\epsilon$.
Specifically, for any point $X_k^{(i)},$ we generate $\hat X_k^{(i)}$ from $X_k^{(i)}$
 such that $\hat X_k^{(i)} \in \Ncal_2^{\best} (\hat X_k^{(i)})$ and update
$$f_k^{\best} = \displaystyle\min_i \{f(\hat X_k^{(i)})\}~\text{and}~X_k^{\best}
= \arg\displaystyle\min_i \{f(\hat X_k^{(i)})\}.$$ Let $f^{\best}$ be the best
function value among  $f^{\best}_1, \ldots, f^{\best}_{k-1}$. If $f^{\best}_k$
is less than or equal to $f^{\best}$, we set $\epsilon_k=\epsilon_{k-1}$;
otherwise we set $\epsilon_k = \gamma \epsilon_{k-1}$ with $\gamma \in (0,1)$.
Moreover, recall that $\bar \sigma_{p,\epsilon}$ in Theorem
\ref{theorem:lp:epsilon:local} is increasing with respect to $\epsilon$ and
$\lim_{\epsilon \rightarrow 0} \bar \sigma_{p,\epsilon} = 0$. Therefore, if
$\epsilon_k$ is too small, then nearly all of the permutation matrices are local
minimizers of \eqref{equ:prob:qap:lp:epsilon}. This might lead to finding a bad
permutation matrix (in terms of solving problem \eqref{equ:prob:optperm}) with a
high probability.  To avoid this drawback, we propose to set a safeguard $\epsilon_{\min}$ for $\epsilon_k$.  We summarize the strategy of updating $\epsilon_k$ as follows
\be \label{equ:epsilon:update}
\begin{array}{l@{\hspace{1mm}}l}
\mbox{if}& f^{\best}_k < f^{\best} \\
&f^{\best} = f^{\best}_k, X^{\best} = X_{k}^{\best}, \epsilon_k=\epsilon_{k-1},\\
\mbox{else}& \\
& \epsilon_{k} = \max\{\gamma \epsilon_{k-1}, \epsilon_{\min}\}.\\
\mbox{end}
\end{array}
\ee

Finally, we use the continuation scheme to dynamically
update $\sigma$ such that problem \eqref{equ:prob:qap:lp:epsilon} changes from
being strongly convex to being strongly concave.  To achieve this goal, we choose $\sigma_0 =
 \min\left\{\frac{\underline{\nu}_f}{p(1-p)} \epsilon_0^{2-p}, \, \, \sigma_{-} \right\}$,
where $\underline{\nu}_f$ is defined in \eqref{equ:f:concave} and $\sigma_{-}  <
0$ is given. Motivated by the strategy used in \cite{xia2010efficient}, we update $\sigma_{k+1}$ by
\be\label{equ:sigma:update}
 \sigma_{k+1} \coloneqq \min\{\tilde\sigma_{k+1}, \sigma_{\max}\}\ \mbox{and}\  \tilde\sigma_{k+1} =
\begin{cases}
\frac12 \sigma_k, & \sigma_{k} \le \sigma_{-}, \\
0, & \sigma_{-} < \sigma_{k} < 0, \\
\sigma_{+}, & \sigma_k = 0,\\
2 \sigma_k, & \sigma_k \ge \sigma_{+},
 \end{cases}
  \ee
  where $\sigma_{+} = -2^{-l}\sigma_0$, $ l = \lceil \log_2(-\sigma_0)\rceil$ and $\sigma_{\max} >0$ is a safeguard for $\sigma$.

 We summarize the practical $L_{p}$ regularization algorithm for solving problem
 \eqref{equ:prob:optperm} in Algorithm \ref{alg:lp:practical}.
 Specifically, the stopping rules on line 5 are $\tol_i^x = \|X_k^{(i)} - X_k^{(i-1)}\|_{\Fsf}/\sqrt{n}$, $\tol_i^f = \frac{\big|F_{\sigma_k, p,\epsilon_k}(X_k^{(i)}) - F_{\sigma_k, p, \epsilon_k}(X_k^{(i-1)})\big|}{1+ \big| F_{\sigma_k, p, \epsilon_k}(X_k^{(i-1)})\big|}$ and $
  \tau_k^x = \max\left\{\tau_0^x/k^3, \ \tau_{\min}^x\right\}, \tau_k^f = \max\{\tau_0^f/k^3, \ \tau_{\min}^f\},$ where $\tau_{\min}^x, \tau_{\min}^f >0$ are some small parameters.
 Of course, other types of criteria can also be applied as well. As discussed after the proof of Theorem \ref{theorem:lowerbound}, the lower bound  \eqref{equ:lowerbound} may be helpful in Line 9.

 \begin{algorithm}[H]
  Given $X_{0} \in \Dcaln$, set $\epsilon_0, \tau_0^x, \tau_0^f, \tau_{\min}^x, \tau_{\min}^f>0$, $\sigma_0 \leq 0$, $\theta, \delta, \gamma, \tol \in (0,1)$.\\
  Set $k = 0$, $f^{\best} = \infty$.\\
   \While{$\|X_k\|_p^p /n - 1> \tol$}{
   \tcp{Lines 4 - 9: compute an approximate KKT point  of \eqref{equ:prob:qap:lp:epsilon}}
   Choose $X_{k}^{(0)}$ by \eqref{equ:warmstart1} and \eqref{equ:warmstart2}. Set $i = 0$, $\alpha_i > 0$, $\tol_i^x = \tol_i^f = \infty$. \\
 \While{$ \tol_i^x > \tau_k^x$ \textnormal{or} $\tol_i^f > \tau_k^f$ }{
  Compute $D^{(i)} = \proj_{\Dcaln} \big(X_k^{(i)} - \alpha_i \nabla F_{\sigma_k, p, \epsilon_k} (X_k^{(i)})\big) - X_k^{(i)}$.\\
 Find the smallest $j$ such that  $\delta^j$ satisfies \eqref{equ:nmls}.\\
  Set $X_{k}^{(i+1)} = X_k^{(i)} + \delta^j D_k^{(i)}$ and $i = i + 1$. \\
 Compute $\hat X_k^{(i)} \in \Ncal_2^{\best} (\hat X_k^{(i)})$ starting from $X_k^{(i)}$  and update $f_k^{\best}$.
 }
   Update $\epsilon_{k+1},$ $\sigma_{k+1}$, $f^{\best},$ and $X^{\best}$ by \eqref{equ:epsilon:update} and \eqref{equ:sigma:update}.\\
    Set $X_{k+1}=X_{k}^{(i)}$ and $k = k + 1$.
      }
     \caption{A practical $L_{p}$  regularization algorithm for problem \eqref{equ:prob:optperm}. }\label{alg:lp:practical}
  \end{algorithm}

 \subsection{Fast dual gradient algorithm for computing projection onto $\Dcaln$} \label{subsection:dualBB:proj}
Note that the dominant computational cost of Algorithm \ref{alg:lp:practical} is to compute projections onto the set $\Dcaln$.
In this subsection, we propose a fast dual gradient method for solving the projection problem
\be\label{equ:prob:proj}
 \min_{X \in \Rbb^{n\times n}}\, \frac12 \|X - C\|_{\Fsf}^2 \quad \st \quad X\e = \e, \ X^{\Tsf} \e = \e,\  X \geq \0,
 \ee where $C \in \Rbb^{n\times n}$ is given.

 The Lagrangian dual problem of \eqref{equ:prob:proj}  is
\be \label{equ:prob:proj:dual0}
\max_{y, z}\, \min_{X \geq 0} \, \Lcal(X, y,z),
\ee
where
$
 \Lcal(X, y,z) = \frac12 \|X - C\|_{\Fsf}^2  - \langle y, X \e - \e\rangle - \langle z, X^{\Tsf} \e - \e \rangle,\nn
 $
 in which $y, z \in \Rbb^n$ are the Lagrange multipliers associated with the linear constraints $X\e = \e$ and $X^{\Tsf} \e = \e,$ respectively.
 Let $\proj_{+}(\cdot)$ denote the projection onto the nonnegative orthant.
Then the dual problem \eqref{equ:prob:proj:dual0} can be equivalently rewritten as \be \label{equ:prob:proj:dual}
\min_{y, z} \,  \theta(y,z) \coloneqq \frac12 \|\proj_{+} \left(C + y \e^{\Tsf} + \e z^{\Tsf}\right)\|_{\Fsf}^2 -  \langle y + z, \e \rangle.
\ee It can be verified that $\theta(y,z)$ is convex and continuously differentiable and 
\be
\nabla \theta(y,z) = \left[ \begin{array}{l} \proj_{+} \left(C + y \e^{\Tsf} + \e z^{\Tsf} \right)\e - \e \\[4pt] \proj_{+} \left(C + y \e^{\Tsf} + \e z^{\Tsf} \right)^{\Tsf}\e - \e \end{array}\right].
\nn
\ee
Note that   \cite{bai2007computing}  computed  a nearest doubly stochastic matrix with a fixed entry via solving a  dual problem similar to \eqref{equ:prob:proj:dual},  by the nonsmooth Newton method.

We propose to use the gradient method using the BB step sizes to solve the
unconstrained problem \eqref{equ:prob:proj:dual}. The method is named as ``dualBB".
After obtaining the solution  $y^*$ and $z^*$  of \eqref{equ:prob:proj:dual}, we recover the projection of $C$ as
$\proj_{\Dcaln}(C) =  \proj_{+} \left(C + y^*\e^{\Tsf} + \e (z^*)^{\Tsf} \right).$
Our preliminary numerical results on the QAPLIB demonstrate that our proposed dualBB method is faster than
MOSEK in many cases.

 \section{Enhanced variants of $L_p$ regularization algorithm}
 In this section, we combine $L_p$ regularization Algorithm
 \ref{alg:lp:practical} with two techniques to further enhance its performance.
  The first variant uses the cutting plane technique and the second one is
 based on a novel $L_p$ regularization model, which has an additional negative
 proximal point term compared with the model \eqref{equ:prob:qap:lp:epsilon}.

    \subsection{$L_p$ regularization algorithm with cutting plane technique} \label{subsection:lp:cp}
    Let  $\tilde{X}$ be a permutation matrix and let $\tilde{f} = f(\tilde{X})$.
    Assume that $\tilde X$ is not optimal. Then there must exist a constant $c_1 > 0$ such that $f^* \leq \tilde{f} - c_1$. Next, we construct some cuts to shrink the feasible region $\Pi_n$ of problem \eqref{equ:prob:optperm} based on the information of $\tilde{X}$.

 Denote $F_{\omega}(X) \coloneqq f(X) + \omega \|X\|_{\Fsf}^2$, where the parameter $\omega$ is chosen  such that $F_{\omega}$  is strongly convex.
 The ideal cut
  \be\label{equ:quadcut}
    \mbox{QC}(\tilde{X}): \quad F_{\omega}(X) \leq \tilde{f} - c_1 + \omega n
  \ee
is hard to be handled because of the nonlinear
  function $f(X)$.
The first order approximation
\be\label{euq:linear_cut_1}
 \mbox{LC1($\tilde{X}$):} \quad \quad  F_{\omega}(\tilde{X}) + \langle \nabla F_{\omega}(\tilde{X}) , X - \tilde{X} \rangle  \leq \tilde{f}  - c_1 + \omega n.
\ee
is easy but it  might not be tight.

Suppose that $\tilde{X}$ is the best point in its 2-neighborhood, a linear cut
is proposed in \cite{bazaraa1982use} as
\be \label{euq:linear_cut_2}
 \mbox{LC2($\tilde{X}$):} \quad \quad  \langle   \tilde{X},  X  \rangle \leq n - 3,
\ee
which precisely cuts $\frac{n(n-1)}{2} + 1$ permutation matrices from $\Pi_n$.

By using the above two cuts \eqref{euq:linear_cut_1} and \eqref{euq:linear_cut_2}, we can shrink $\Pi_n$ to a strictly smaller set $\Acal_{\Pi_n} := \Pi_n \cap \mbox{LC1}(\tilde X) \cap \mbox{LC2}(\tilde X),$ which still contains the global solution of problem \eqref{equ:prob:optperm}.   Therefore, we can solve the following problem (instead of problem \eqref{equ:prob:optperm}):
   \be \label{equ:prob:optperm:cut}
    \min\limits_{X \in \Acal_{\Pi_n}} \, f(X).
 \ee

By  replacing $\Dcaln$ by $\Acal \coloneqq \Dcaln \cap \mbox{LC1}(\tilde X) \cap \mbox{LC2}(\tilde X) $,  Algorithm \ref{alg:lp:practical} can be employed to solve problem \eqref{equ:prob:optperm:cut}, where the $L_p$ regularization subproblem becomes
\be\label{equ:prob:qap:lp:epsilon:cut}
 \min\limits_{X \in \mathcal{A}}\  F_{\sigma,p,\epsilon}(X).
 \ee

The $L_p$ regularization algorithm  with the cutting place technique ($L_p$-CP) is given as follows.
~\\
     \begin{algorithm}[H]
  Given $\Acal = \Dcaln$, set $K_{\max} > 0$ and $K = 0$.\\
 \While{$K < K_{\max}$}{
  Solve \eqref{equ:prob:optperm:cut} by Algorithm \ref{alg:lp:practical} with  subproblem \eqref{equ:prob:qap:lp:epsilon} there replaced by \eqref{equ:prob:qap:lp:epsilon:cut}, and obtain $X^{\best}_K$.\\
Set $\Acal =  \Acal \cap \left(\mbox{LC1}(X_K^{\best}) \cap \mbox{LC2}(X_K^{\best}) \right)$.\\
 Set  $X^{\best} = \argmin_{K} \{f(X_K^{\best})\}$ and $K = K + 1$.
}
   \caption{An $L_p$-CP algorithm for problem \eqref{equ:prob:optperm}. }\label{alg:lp:cp}
  \end{algorithm}
The solution quality of Algorithm \ref{alg:lp:cp} is generally better
than that of Algorithm \ref{alg:lp:practical} due to the cutting plane
technique. However, Algorithm \ref{alg:lp:cp} might be slower than
Algorithm \ref{alg:lp:practical} because computing the projections onto
the restricted domain $\mathcal{A}$ in Algorithm \ref{alg:lp:cp} is slightly
time consuming than computing the projections onto the set $\Dcaln$ and
computational cost of the projections is the dominant part in both algorithms. Note that Algorithm \ref{alg:lp:cp} with $K_{\max} = 1$ reduces to Algorithm \ref{alg:lp:practical}.

    \subsection{$L_p$ regularization with negative proximal point technique} \label{subsection:lp:negProx}
The cutting plane technique may not improve the performance of Algorithm
\ref{alg:lp:practical}. For instance, Algorithm \ref{alg:lp:cp} with $K_{\max} =
4$ and Algorithm \ref{alg:lp:practical} find the same permutation matrix when
they are used to solve the problem instance ``chr20c'' from QAPLIB. By
investigating the trajectories of the permutation matrices generated by
Algorithm \ref{alg:lp:cp}, we find that the trajectories (corresponding to different $K$) are nearly the same for this problem.  To avoid this  phenomenon,  we need to push the trajectories of the permutation
matrices corresponding to different $K$ far away from each other. It can be
achieved by
adding a negative proximal point term to $f(X)$.

    Given $\mu > 0$ and $\hat X \in \Dcaln,$ consider problem
      \be\label{equ:prob:optperm:negProx}
 \min\limits_{X \in \Pi_n} \   f(X)  - \mu \|X -  \hat X\|_{\Fsf}^2.
 \ee The next theorem shows that the solution of \eqref{equ:prob:optperm:negProx} with an appropriate choice of $\mu$ is a solution of \eqref{equ:prob:optperm}. Moreover, the solution of \eqref{equ:prob:optperm:negProx} is the farthest point away from the given point $\hat X$ among the solution set of \eqref{equ:prob:optperm}.

 \begin{theorem} \label{thm:negProx}
Let $c_2>0$ be such that
\be \label{equ:fxfy}
|f(X) - f(Y)| \geq c_2,\quad \forall~X, Y \in \Pi_n~\text{with} ~f(X) \ne f(Y).
\ee
Suppose that $0 < \mu < \frac{c_2}{2{n}}.$ Then any solution $X_{\mu}^*$ of \eqref{equ:prob:optperm:negProx} is also a solution
of \eqref{equ:prob:optperm}. Moreover, for any solution $X^*$ of \eqref{equ:prob:optperm}, we have
\be \label{equ:thm:negProx:00}
 \|X_{\mu}^* - \hat X\|_{\Fsf} \geq  \|X^* - \hat X \|_{\Fsf}.
\ee

\end{theorem}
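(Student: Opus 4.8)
The plan is to exploit the fact that problem \eqref{equ:prob:optperm:negProx} is an optimization over the \emph{finite} set $\Pi_n$, so both $f(X)$ and the perturbation $-\mu\|X-\hat X\|_{\Fsf}^2$ take only finitely many values; the claim is that, when $\mu$ is small enough, the perturbation cannot change which permutation matrices are optimal for $f$, and among those it selects the one farthest from $\hat X$. First I would let $X_\mu^*$ be any solution of \eqref{equ:prob:optperm:negProx} and $X^*$ be any solution of \eqref{equ:prob:optperm}, and write the optimality inequality
\[
f(X_\mu^*) - \mu\|X_\mu^* - \hat X\|_{\Fsf}^2 \;\le\; f(X^*) - \mu\|X^* - \hat X\|_{\Fsf}^2.
\]
The key observation is that for any two permutation matrices, $\|X - \hat X\|_{\Fsf}^2 \le 2n$ (since $\|X\|_{\Fsf}^2 = \|X^*\|_{\Fsf}^2 = n$ and both lie in $\Dcaln$, so $\|X-\hat X\|_{\Fsf}^2 = 2n - 2\langle X,\hat X\rangle \le 2n$ because $\langle X,\hat X\rangle \ge 0$). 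Hence $\mu\big(\|X_\mu^*-\hat X\|_{\Fsf}^2 - \|X^*-\hat X\|_{\Fsf}^2\big) \le 2\mu n < c_2$.

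Rearranging the optimality inequality gives $f(X_\mu^*) - f(X^*) \le \mu\big(\|X_\mu^*-\hat X\|_{\Fsf}^2 - \|X^*-\hat X\|_{\Fsf}^2\big) < c_2$. If $f(X_\mu^*) \ne f(X^*)$, then by the separation property \eqref{equ:fxfy} we would have $f(X_\mu^*) - f(X^*) \ge c_2$ (note $f(X_\mu^*) \ge f(X^*) = f^*$ since $X^*$ is globally optimal for $f$ over $\Pi_n$, so the absolute value is just the difference), contradicting the strict inequality above. Therefore $f(X_\mu^*) = f(X^*) = f^*$, which proves that $X_\mu^*$ is a solution of \eqref{equ:prob:optperm}.

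For the second assertion, plug $f(X_\mu^*) = f(X^*)$ back into the optimality inequality: the $f$-terms cancel, leaving $-\mu\|X_\mu^* - \hat X\|_{\Fsf}^2 \le -\mu\|X^* - \hat X\|_{\Fsf}^2$, and dividing by $-\mu < 0$ yields \eqref{equ:thm:negProx:00}. The only mild subtlety — and the one point to be careful about — is the ordering in \eqref{equ:fxfy}: one must use that $X^*$ attains the global minimum of $f$ over $\Pi_n$ to conclude $f(X_\mu^*) \ge f(X^*)$, so that the hypothesis $|f(X_\mu^*) - f(X^*)| \ge c_2$ indeed contradicts $f(X_\mu^*) - f(X^*) < c_2$; there is no genuinely hard step here, as the whole argument is a two-line comparison once the bound $\|X - \hat X\|_{\Fsf}^2 \le 2n$ on the Birkhoff polytope is in hand.
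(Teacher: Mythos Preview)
Your proof is correct and follows essentially the same route as the paper: start from the optimality inequality for $X_\mu^*$ versus $X^*$, bound the difference of the quadratic terms by $2n$, use $\mu < c_2/(2n)$ together with $f(X_\mu^*)\ge f(X^*)$ and the gap condition \eqref{equ:fxfy} to force $f(X_\mu^*)=f(X^*)$, and then cancel the $f$-terms to obtain \eqref{equ:thm:negProx:00}. One small slip to clean up: you write $\|X-\hat X\|_{\Fsf}^2 = 2n - 2\langle X,\hat X\rangle$, but $\hat X$ is only assumed to lie in $\Dcaln$, so in general $\|\hat X\|_{\Fsf}^2 \le n$ rather than $=n$; the needed inequality $\|X-\hat X\|_{\Fsf}^2 \le 2n$ still holds (and the paper gets it by expanding the difference and using $\langle X^*,\hat X\rangle\le n$, $\langle X_\mu^*,\hat X\rangle\ge 0$), so this does not affect the argument.
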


\begin{proof}
From the optimality of $X^*_{\mu}$, we obtain
\be \label{equ:theorem:negProx:a1}
f(X_{\mu}^*) - \mu \|X_{\mu}^* - \hat X\|_{\Fsf}^2 \leq f(X^*) - \mu \|X^* - \hat X \|_{\Fsf}^2,
\ee
which is equivalent to
\begin{align}
f(X_{\mu}^*) \leq {}f(X^*) + \mu\left(\|X_{\mu}^*\|_{\Fsf}^2-\|X^*\|_{\Fsf}^2+2\langle X^* - X^*_{\mu}, \hat X \rangle\right).
\nn
\end{align}
 Combining the above inequality with the facts that $\|X_{\mu}^*\|_{\Fsf}=\|X^*\|_{\Fsf}=\sqrt{n},$ $\langle X^*, \hat X \rangle \leq n,$ and $\langle X_{\mu}^*, \hat X \rangle \geq 0$ yields \be\label{equ:theorem:negProx:a3}
f(X_{\mu}^*) \leq   f(X^*) + 2 \mu n < f(X^*) + c_2,
\ee
 where the last inequality is due to the choice of $\mu$. Moreover, it follows from the optimality of $X^*$ that  $f(X^*) \leq f(X_{\mu}^*),$ which, together with \eqref{equ:fxfy} and \eqref{equ:theorem:negProx:a3}, implies $f(X_{\mu}^*) = f(X^*).$ This shows that $X_{\mu}^*$ is a solution to  \eqref{equ:prob:optperm}. From \eqref{equ:theorem:negProx:a1} and $f(X_{\mu}^*) = f(X^*),$ we immediately obtain the desired result \eqref{equ:thm:negProx:00}.
 \end{proof}

By setting $\hat X=\frac{1}{K}\sum_{i = 1}^K \hat X_i$ in Theorem \ref{thm:negProx}, where $\hat X_i\in \Dcaln$ for all $i = 1, \ldots, K,$ and combining it with Theorem \ref{theorem:lp:epsilon:exact}, we obtain the following theorem.
 \begin{theorem}\label{theorem:lp:epsilon:exact:negProx}
   Suppose $0< \mu <\frac{c_2}{2n}$ and $\sigma > \max((\overline{\nu}_f - 2 \mu)/{\overline{\nu}_h },0)$. Then any global solution of
   \be\label{equ:prob:qap:lp:epsilon:cut:2}
 \min\limits_{X \in \Dcaln} \   f(X) +  \sigma \|X + \epsilon {\bf 1}\|_p^p - \mu  \Big\|X - \frac{1}{K}{\sum_{i = 1}^K \hat X_i}\Big\|_{\Fsf}^2
 \ee is also a global solution of problem \eqref{equ:prob:optperm}. Moreover, it is one of the farthest solutions of \eqref{equ:prob:optperm} away from $\frac{1}{K}\sum_{i = 1}^K \hat X_i$.
 \end{theorem}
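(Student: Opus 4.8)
The plan is to obtain the result by stacking the two theorems already proved: Theorem~\ref{theorem:lp:epsilon:exact} (which makes the $L_p$ regularizer force the minimizer onto $\Pi_n$) and Theorem~\ref{thm:negProx} (which turns a solution of the negative proximal point problem into a farthest solution of \eqref{equ:prob:optperm}). Concretely, let $\bar X$ be any global solution of \eqref{equ:prob:qap:lp:epsilon:cut:2} and set $\hat X \coloneqq \frac1K\sum_{i=1}^K\hat X_i$; since $\Dcaln$ is convex and each $\hat X_i\in\Dcaln$, we have $\hat X\in\Dcaln$, so $\hat X$ is an admissible proximal center for Theorem~\ref{thm:negProx}. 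The two things I have to check are: (i) $\bar X$ is a permutation matrix; and (ii) $\bar X$ solves \eqref{equ:prob:optperm:negProx} with this $\hat X$.

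\textbf{Showing $\bar X\in\Pi_n$.} I would repeat the strong concavity argument from the proof of Theorem~\ref{theorem:lp:epsilon:exact}, now carrying the extra term $-\mu\|X-\hat X\|_{\Fsf}^2$. Along a segment $tX+(1-t)Y$ with $X,Y\in\Dcaln$ and $t\in[0,1]$: inequality \eqref{equ:grad:Lips} gives $f$ a ``concavity surplus'' of $-\tfrac{\overline{\nu}_f}{2}t(1-t)\|X-Y\|_{\Fsf}^2$; Proposition~\ref{lemma:concave:lp} (inequality \eqref{equ:lemma:concave:lp}) gives $\sigma h$ a surplus of $+\tfrac{\sigma\overline{\nu}_h}{2}t(1-t)\|X-Y\|_{\Fsf}^2$; and the parallelogram-type identity $\|t(X-\hat X)+(1-t)(Y-\hat X)\|_{\Fsf}^2 = t\|X-\hat X\|_{\Fsf}^2+(1-t)\|Y-\hat X\|_{\Fsf}^2-t(1-t)\|X-Y\|_{\Fsf}^2$ shows that $-\mu\|\cdot-\hat X\|_{\Fsf}^2$ contributes an exact surplus of $+\mu t(1-t)\|X-Y\|_{\Fsf}^2$. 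Adding these, the objective of \eqref{equ:prob:qap:lp:epsilon:cut:2} is strongly concave on $\Dcaln$ with modulus $\sigma\overline{\nu}_h-\overline{\nu}_f+2\mu$, which is positive precisely under the hypothesis $\sigma>(\overline{\nu}_f-2\mu)/\overline{\nu}_h$ (together with $\sigma>0$, encoded in the $\max$). A strictly concave function on the polytope $\Dcaln$ has all of its minimizers at vertices, and by the Birkhoff--von Neumann theorem those vertices are exactly $\Pi_n$; hence $\bar X\in\Pi_n$.

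\textbf{Reducing to \eqref{equ:prob:optperm:negProx} and concluding.} On $\Pi_n$ the regularizer is constant, $\|X+\epsilon\mathbf{1}\|_p^p=n(1+\epsilon)^p$, so from the global optimality of $\bar X$ in \eqref{equ:prob:qap:lp:epsilon:cut:2}, together with the fact (from the previous step) that $\bar X$ is competing only against other permutation matrices, $\bar X$ minimizes $f(X)-\mu\|X-\hat X\|_{\Fsf}^2$ over $\Pi_n$, i.e. $\bar X$ is a global solution of \eqref{equ:prob:optperm:negProx}. Since $0<\mu<\tfrac{c_2}{2n}$ by assumption, Theorem~\ref{thm:negProx} yields that $\bar X$ is a global solution of \eqref{equ:prob:optperm} and, by \eqref{equ:thm:negProx:00}, $\|\bar X-\hat X\|_{\Fsf}\ge\|X^*-\hat X\|_{\Fsf}$ for every solution $X^*$ of \eqref{equ:prob:optperm}; that is, $\bar X$ is one of the farthest solutions from $\tfrac1K\sum_{i=1}^K\hat X_i$, as claimed. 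I do not anticipate a genuine obstacle here, since the argument is bookkeeping on top of the two cited results; the one place to be careful is the concavity accounting, namely confirming that the negative proximal term sharpens the concavity modulus by exactly $2\mu$ so that the stated threshold for $\sigma$ is the right one, and not dropping the separate requirement $\sigma>0$.
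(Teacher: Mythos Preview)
Your proof is correct and is exactly the approach the paper indicates (it merely says to combine Theorems~\ref{theorem:lp:epsilon:exact} and~\ref{thm:negProx}, with the negative proximal term lowering the concavity threshold for $\sigma$ by $2\mu$); you have just spelled out the details. One harmless slip: on $\Pi_n$ the regularizer $\|X+\epsilon\mathbf{1}\|_p^p$ equals $n(1+\epsilon)^p+(n^2-n)\epsilon^p$, not $n(1+\epsilon)^p$, but only its constancy is used.
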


Based on Theorem \ref{theorem:lp:epsilon:exact:negProx}, we present the $L_p$
regularization algorithm with the negative proximal point technique, which is
dubbed as $L_p$-negProx.

   \begin{algorithm}[H]
  Set $\mu>0$, $K_{\max} > 0$ and $K = 0$. Define $X_K^{\best} = \0$. \\
 \While{$K < K_{\max}$ \textnormal{and} $X_K^{\best} \not \in \{X_1^{\best}, \ldots, X_{K-1}^{\best}\}$}{
  Solve \eqref{equ:prob:optperm:negProx} with $\hat X=\frac{1}{K}{\sum_{i = 1}^K  X_i^{\best}}$ by Algorithm \ref{alg:lp:practical} with  subproblem \eqref{equ:prob:qap:lp:epsilon}  replaced by \eqref{equ:prob:qap:lp:epsilon:cut:2}, and obtain $X^{\best}_K$.\\
Set $\mu= \mu/2,$ $X^{\best}= \argmin_{K} \{f(X_K^{\best})\}$, and $K= K + 1$.
}
   \caption{An $L_p$-negProx algorithm for problem \eqref{equ:prob:optperm}.}\label{alg:lp:negProx}
     \end{algorithm}

Consider the problem ``chr20c'' in QAPLIB again. By simply setting $\mu \equiv
0.1$,  Algorithm \ref{alg:lp:negProx} is able to find the global solution. More
detailed numerical results are reported in Section \ref{subsection:negProx} to
demonstrate the effectiveness of the proposed negative proximal point technique.

 We can also combine Algorithm \ref{alg:lp:practical} with both the cutting
 plane technique and the negative proximal point technique. The resulting
 algorithm is named as $L_p$-CP-negProx and it is described as follows.

    \begin{algorithm}[H]
  Given $\Acal \coloneqq \Dcaln$, set $\mu>0$, $K_{\max} > 0$ and $K = 0$. Define $X_K^{\best} = \0$.\\
 \While{$K < K_{\max}$ \textnormal{and} $X_K^{\best} \not \in \{X_1^{\best}, \ldots, X_{K-1}^{\best}\}$}{
  Solve \eqref{equ:prob:optperm:negProx} with $\hat X=\frac{1}{K}{\sum_{i = 1}^K \hat X_i^{\best}}$ by Algorithm \ref{alg:lp:practical} with subproblem \eqref{equ:prob:qap:lp:epsilon}  replaced by \eqref{equ:prob:qap:lp:epsilon:cut:2},  wherein $\Dcaln$ being replaced by $\Acal$, and obtain $X_K^{\best}$.\\
 Set $\Acal =  \Acal \cap \mbox{LC1}(X_K^{\best}) \cap \mbox{LC2}(X_K^{\best})$.\\
Set $\mu= \mu/2,$ $X^{\best}= \argmin_{K} \{f(X_K^{\best})\},$ and $K= K + 1$.
}
   \caption{An $L_p$-CP-negProx algorithm for problem \eqref{equ:prob:optperm}. }\label{alg:lp:cp:negProx}
  \end{algorithm}

We give some remarks on Algorithms \ref{alg:lp:negProx} and \ref{alg:lp:cp:negProx} to conclude this section.

Firstly, the term $-  \big\|X - \frac{1}{K}{\sum_{i = 1}^K \hat X_i}\big\|_{\Fsf}^2$ plays the role in pushing Algorithms \ref{alg:lp:negProx} and \ref{alg:lp:cp:negProx} to find a new permutation matrix which is far away from the average $\frac{1}{K} \sum_{i=1}^K \hat X_i$ of $\{\hat X_i\}_{i=1}^K$.  More generally, given nonnegative $\{w_i\}_{i=1}^K$ satisfying $\sum_{i=1}^K w_i = 1,$ we can choose $- \| X - \sum_{i = 1}^K w_i \hat X_i\|_{\Fsf}^2.$

Secondly, Theorem \ref{theorem:lp:epsilon:exact:negProx} shows that problem
\eqref{equ:prob:qap:lp:epsilon:cut:2} with any $\mu \in (0, \frac{c_2}{2 n})$
shares the same global solution with problem \eqref{equ:prob:optperm}. However,
(i) since $c_2$ is generally unknown, it is hard to choose the parameter $\mu$
satisfying $\mu<\frac{c_2}{2 n}$ and (ii) $\mu \in (0, \frac{c_2}{2 n})$ might be
too small to effectively improve the performance of Algorithms
\ref{alg:lp:negProx} and \ref{alg:lp:cp:negProx}. Therefore, we initialize $\mu$
with a relatively large value and gradually decrease it by setting $\mu =
\mu/2$ in the two algorithms. We terminate the two algorithms by checking that
if the newly obtained permutation matrix $\hat X_K$ has already been explored or
not; see line 2 of Algorithms  \ref{alg:lp:negProx} and \ref{alg:lp:cp:negProx}.

\section{Numerical results on QAPLIB}
In this section, we report numerical results to demonstrate the efficiency and effectiveness of our proposed algorithms for solving QAP \eqref{equ:prob:qap}, which is a special and important case of problem \eqref{equ:prob:optperm}.

We consider 134 instances from QAPLIB \cite{burkard1997qaplib} except
\verb|esc16f| and \verb|tai10b|  since the elements of matrix $A$ of \verb|esc16f|
are all zero and the best feasible solution of \verb|tai10b| is not provided.
All the experiments  were performed in OS X 10.10 on  an iMac with a 3.2GHz Intel Core i5 Processor  with access to 8GB of RAM.  We implemented our methods in MATLAB (Release 2014b).

For each problem instance, we scale the matrix $A$ and $B$ as   $A \coloneqq  A/\rho_A$ and $B \coloneqq B/\rho_B$ with $ \rho_A =  \max_{ij} |A_{ij}|$ and $\rho_B = \max_{ij} |B_{ij}|$.
We use the relative gap ratio
\[
\mathrm{gap} \coloneqq \left(\frac{\mathrm{obj} -  \mathrm{obj}^*}{\mathrm{obj}^*} \times 100\right)\%
\] to measure the quality of the solution returned by different algorithms, where $\mathrm{obj}^*$ is the optimal or best known function value provided by QAPLIB and $\mathrm{obj}$ is the function value obtained by each algorithm.  Note that the values of   $\mathrm{obj}^*$  and obj are computed based on the original $A$ and $B$, and all $\mathrm{obj}^*$ are positive integers.

\subsection{Implementation details}\label{subsection:impDetails}

For Algorithm \ref{alg:lp:practical}, we choose the initial point $X_0 =
\frac{1}{n} \1$, the initial guess $\epsilon_0 = 0.1$, the safeguards
$\epsilon_{\min} = 10^{-3}$, $\sigma_{\max} = 10^6$, and the shrinkage
parameter for updating $\epsilon_k$ as $\gamma = 0.9$. Our experience
shows that Algorithm \ref{alg:lp:practical} also works on an $\epsilon_0$ larger
than $1$, smaller shrinkage and safeguard parameters,   but the choices of these parameter might affect its efficiency.  We set the stopping tolerance $\mathrm{tol} = 10^{-3}$, which guarantees that the returned $X_k$ is sufficiently close to some permutation matrix. 
For the  projected gradient method,  we choose the initial stepsize $\alpha_0 = 10^{-3}$, the parameters of nonmonotone  linesearch condition \eqref{equ:nmls} as
$\theta = 10^{-4}$, $\delta = 0.5$, $\eta = 0.85$,   the initial guess of the tolerance for solving the subproblems \eqref{equ:prob:qap:lp:epsilon} as $\tau_0^x = 10^{-3},
\tau_0^f = 10^{-6}$ and the corresponding safeguards as $\tau_{\min}^x = 10^{-5}, \tau_{\min}^f = 10^{-8}$. For
problems with $n > 50$, we use MOSEK 7.1\footnote{Downloadable from \url{https://www.mosek.com/resources/downloads}. } to compute the projection onto $\Dcaln$ if $\max_{ij} |X_{ij}| > 8$, and use dualBB otherwise. For problems with $n \leq 50$, we always use MOSEK 7.1 to compute the projection onto $\Dcaln$.
For Algorithms \ref{alg:lp:cp} -- \ref{alg:lp:cp:negProx}, we choose $K_{\max} = 10$. As for computing projections onto the restricted domain $\mathcal{A}$ in Algorithms  \ref{alg:lp:cp}  and \ref{alg:lp:cp:negProx}, MOSEK 7.1 is always employed no matter how large $n$ is. For Algorithms \ref{alg:lp:negProx} and \ref{alg:lp:cp:negProx}, we choose $\mu = \min\left\{0.5,(\overline{\nu}_f - \underline{\nu}_f)/100\right\}$, where $\overline{\nu}_f$ and $\underline{\nu}_f$  are taken as the largest and smallest eigenvalues of $B^{\Tsf} \otimes A^{\Tsf} + B \otimes A \in \Rbb^{n^2 \times n^2}$, respectively.  The parameter $\omega$ in \eqref{equ:quadcut}, which is used in Algorithms  \ref{alg:lp:cp}  and \ref{alg:lp:cp:negProx},  is chosen to be $1 - 0.5 \underline{\nu}_f$.

For a given $X \in \Dcaln$,  a  locally 2-optimal permutation matrix can be
generated by a procedure with two steps: (i) generating a permutation $\hat
X^{(0)} \in \Pi_n$ by some greedy procedure; (ii) finding a permutation matrix
$\hat X^{(k)} \in \Ncal_2^{\best}(\hat X^{(k)})$ starting from $\hat X^{(0)}$.
Their details are outlined as follows.

Step (i). Given $X \in \Dcaln$, we use a fast greedy procedure in the code ``LagSA'' of
\cite{xia2010efficient} to generate $\hat X^{(0)} \in \Pi_n$. It computes
an approximate solution of the linear assignment problem $\max_{Y \in
\Pi_n} \langle X, Y \rangle$ in a greedy way as follows:  (a) Calculate a vector
$x \in \Rbb^n$ with the $i$-th element $x_i= \max_{j \in \Ncal} X_{ij}$. (b)
Identity a permutation vector $\pi \in \Rbb^n$ such that $x_{\pi_1} \leq
x_{\pi_2} \leq \cdots \leq x_{\pi_n}$.  (c) Let $\hat X^{(0)} \coloneqq \0$.
For each $i  \in \Ncal$, compute $k_{\pi_i} = \arg\,\max_{j \in \Ncal\backslash
\{k_{\pi_1}, \ldots, k_{\pi_{i-1}}\}} X_{\pi_i, j}$ with $\{k_{\pi_0}\} =
\emptyset$ and set  $\hat X^{(0)}_{\pi_i, k_{\pi_i}} = 1$.  We can also
apply the canonical Hungarian algorithm to solve the linear assignment problem.
The complexity  of the Hungarian algorithm and ``LagSA'' is
$O(n^3)$ and  $O(n^2)$, respectively.

 Step (ii). Starting from  $\hat X^{(0)}$, we perform an iterative local
 2-neighborhood search to find a locally 2-optimal permutation matrix. More
 specifically, for $k \geq 0$, we compute
     $\hat X^{(k)}_s \in \arg \min\limits_{Y \in \Ncal_2(\hat X^{(k)})} f(Y)$
     and update
     \be \label{equ:ls:Y:1}
     \hat X^{(k+1)} =  \begin{cases}  \hat X^{(k)}_s, &  \text{if}~f(\hat X^{(k)}_s) < f(\hat X^{(k)}), \\   \hat X^{(k)}, & \text{otherwise}.
     \end{cases}
     \ee
     The above procedure is terminated until a locally 2-optimal permutation matrix is found, i.e., $\hat X^{(k+1)} = \hat X^{(k)}$ for some $k.$

The above iterative local 2-neighborhood search procedure terminates within
$\lceil (f(\hat X^{(0)}) - f^*)/c_2 \rceil$ iterations. This is because of the
facts $f(\hat X^{(k)}) \leq f(\hat X^{(k-1)}) - c_2 \leq f(\hat X^{(0)}) - kc_2$
(i.e., \eqref{equ:fxfy}) and $f(\hat X^{(k)}) \geq f^*.$ The cost of computing
$\hat X^{(k)}_s$ is cheap for QAP. Suppose the procedure terminates within $k_0$
iterations. Then the total computational cost is $4n^3 + {\cal O}(k_0n^2)$; see
\cite{misevicius2012implementation, taillard1991robust, taillard1995comparison}
for detailed information.

  \subsection{Comparisons of $L_p$ and $L_2$ regularization algorithms}\label{subsection:lp:comp:0}

In this subsection, we compare our proposed $L_p$ regularization algorithms
(Algorithm \ref{alg:lp:practical}) using $p = 0.25, 0.5, 0.75$ with the  $L_2$
regularization algorithm LagSA \cite{xia2010efficient}. We also test a variant
of LagSA, where subproblem \eqref{equ:prob:qap:l2} is solved by the projected
gradient method, and name the variant as LagSA-BB.
Note that LagSA and LagSA-BB do not use the local 2-neighborhood search
or the rounding techniques\footnote{The local 2-neighborhood search and the rounding techniques can be used to
improve the performance of the two algorithms.}. Therefore, for fair of
comparisons, we use a basic version of our proposed algorithm, named as
$L_p$-bs, which does not use the local 2-neighborhood search technique neither.

We summarize the numerical results in Table \ref{table:gap:level:02:14}, where
the number denotes how many problem instances can be solved by the corresponding
algorithm within the given gap. For instance, Table \ref{table:gap:level:02:14}
shows that LagSA can solve $14$ instances within gap $0.0$ (to global
optimality) and $99$ instances within gap $5.0$. It can be seen from Table
\ref{table:gap:level:02:14} that our proposed $L_p$ regularization algorithms
generally perform better than LagSA and LagSA-BB and $L_{0.75}$-bs performs the
best (among all the five algorithms). It can also be seen from Table
\ref{table:gap:level:02:14} that LagSA-BB performs better than LagSA by using
the projected gradient method to solve $L_2$ regularization subproblem \eqref{equ:prob:qap:l2} instead of the (truncated) Frank-Wolfe method.

 \begin{table}[!htbp]
\centering
\linespread{1.05}
  \begin{footnotesize}
  \caption{Comparison of gap levels of the five algorithms on 134 instances in QAPLIB} \label{table:gap:level:02:14}
  \begin{tabular}{@{}r@{\hspace{0.25cm}}r@{\hspace{0.25cm}}r@{\hspace{0.25cm}}r@{\hspace{0.25cm}}r@{\hspace{0.25cm}}r@{\hspace{0.25cm}}r@{\hspace{0.25cm}}r@{\hspace{0.25cm}}r@{\hspace{0.25cm}}r@{\hspace{0.25cm}}r@{\hspace{0.25cm}}r@{\hspace{0.25cm}}r@{\hspace{0.25cm}}r@{\hspace{0.25cm}}r@{\hspace{0.25cm}}r@{}}
  \hline
\Gape[4pt] gap $\leq $ \% & 0.0  & 0.1  & 0.2  & 0.3  & 0.4  & 0.5  & 0.6  & 0.7  & 0.8  & 0.9  & 1.0    & 2.0   & 3.0   & 4.0    & 5.0 \\
 \Xcline{1-16}{.8pt}
        LagSA &  14 & 21 & 29 & 41 & 42 & 44 & 48 & 51 & 53 & 57 & 61 & 80 & 91 & 95 & 99 \\
  LagSA-BB &  26 & 39 & 50 & 56 & 58 & 62 & 65 & 68 & 73 & 78 & 80 & 96 & 105 & 111 & 116 \\
$L_{0.25}$-bs &  22 & 37 & 46 & 57 & 58 & 59 & 62 & 68 & 71 & 74 & 75 & 90 & 102 & 105 & 109 \\
$L_{0.5}$-bs &  24 & 37 & 50 & 56 & 59 & 60 & 64 & 70 & 76 & 77 & 77 & 89 & 107 & 111 & 113 \\
$L_{0.75}$-bs &  \textbf{27} & \textbf{44} & \textbf{52} & \textbf{65} & \textbf{67} & \textbf{69} & \textbf{73} &
\textbf{74} & \textbf{80} & \textbf{82} & \textbf{84} & \textbf{98} & \textbf{108} & \textbf{114} & \textbf{115} \\
                      \hline
\end{tabular}
\end{footnotesize}
\end{table}

To make the comparison clearer, we make a pairwise comparison of the five
algorithms as used in \cite{dai2005projected}.
 Given a problem and a collection of the five algorithms, algorithm $i$ is called the winner if the gap
 obtained by the algorithm is the smallest among all algorithms. We summarize the comparison results in Table
\ref{table:winners:02:14}.  The second line of Table
\ref{table:winners:02:14} reports the winners of each algorithm among all the five algorithms for solving the 134 instances. In particular, Table
\ref{table:winners:02:14} shows that $L_{0.75}$-bs wins $62$ of the 134 instances.  Starting from the third line, we perform pairwise comparisons of the algorithms. It follows from the last line of this table that $L_{0.75}$-bs shows its superiority over the other algorithms. In summary,
 $L_{0.75}$-bs performs the best among all
the five algorithms. Therefore, we set $p=0.75$ in our proposed $L_p$
regularization algorithms in the subsequent numerical experiments.
It should be pointed out that the performance of  $L_{0.75}$ is much
better than that of $L_{0.75}$-bs due to the local 2-neighborhood search and
rounding techniques. For instance,  $L_{0.75}$ can solve 51 problems to a zero
gap while $L_{0.75}$-bs  can only solve 27 problems to a zero gap.

\begin{table}[!htbp]
\centering
\linespread{1.05}
\caption{Pairwise comparison of the five algorithms}\label{table:winners:02:14}
\begin{footnotesize}
\begin{tabular}{@{}c@{:}cccccc@{}}
\hline
\multicolumn{2}{c}{methods} & LagSA & LagSA-BB & $L_{0.25}$-bs & $L_{0.5}$-bs & $L_{0.75}$-bs \\
\Xcline{1-7}{0.8pt}
\multicolumn{2}{c}{\# of winners}& 28& 53& 47& 49& \textbf{62}\\
    LagSA &   i& --- &  \ 47 : 107& 53 : 92& 49 : 98&   \,\ 46\ : \textbf{101} \\
  LagSA-BB\, &   i& 107 : 47& --- & 88 : 66& 79 : 76& 71 : \textbf{88} \\
$L_{0.25}$-bs &   i&  \ 92 \,: 53& 66 : 88& --- & 85 : 96& 69 : \textbf{99} \\
$L_{0.5}$-bs &   i&  \ 98 \,: 49& 76 : 79& 96 : 85& --- & 86 : \textbf{93} \\
$L_{0.75}$-bs &   i& \textbf{101}\,: 46& \textbf{88}\,: 71& \textbf{99}\,: 69& \textbf{93}\,: 86& ---  \\
\hline
\end{tabular}
\end{footnotesize}
\end{table}

\subsection{Comparisons of $L_{0.75}$-negProx, $L_{0.75}$-CP, and $L_{0.75}$-CP-negProx}\label{subsection:negProx}

Numerical results of $L_{0.75}$-negProx, $L_{0.75}$-CP, and
$L_{0.75}$-CP-negProx are presented in Tables \ref{table:lp:negProx:less80} and
\ref{table:lp:negProx:greater80}.  For the sake of saving space, we
only present in Table  \ref{table:lp:negProx:less80} the results on problem
instances such that the best gap returned by $L_{0.75}$-negProx, $L_{0.75}$-CP, and $L_{0.75}$-CP-negProx is zero.

In the tables in this and subsequent subsections,  ``Rgap'' denotes the average
relative gap of 100,000 random permutation matrices, which can illustrate the
hardness of the corresponding problem to some extent; ``gap'' denotes the
relative gap achieved by our proposed algorithm; and ``time'' denotes the
running time in seconds of different algorithms. We put the best gaps
among all the gaps returned by different algorithms in bold. In the last line of
each table, we summarize the total number of instances $N_{\best}$ such that the gap returned by each algorithm is the best among the gaps returned by all algorithms.

It can be seen from Tables \ref{table:lp:negProx:less80} and \ref{table:lp:negProx:greater80} that the performance of $L_{0.75}$-negProx and $L_{0.75}$-CP are better than that of $L_{0.75}$ on most of problem instances. This shows that both  the negative proximal point technique and the cutting plane technique are helpful in improving the performance of $L_{0.75}.$ It can also be seen from Tables \ref{table:lp:negProx:less80} and \ref{table:lp:negProx:greater80} that $L_{0.75}$-negProx (which employs the negative proximal point technique) performs much better than $L_{0.75}$-CP (which uses the cutting plane technique). In particular, $L_{0.75}$-negProx reduces the gaps of 17 problem instances to zero and $L_{0.75}$-CP reduces the gaps of only 5 problem instances to zero. This demonstrates that our proposed negative proximal point technique is more effective in improving the performance of $L_{0.75}$.

It can also be observed from the two tables that $L_{0.75}$-CP-negProx (which
employs both the cutting plane technique and the negative proximal point
technique) performs slightly better in terms of the solution quality than
$L_{0.75}$-negProx.  However, $L_{0.75}$-negProx is faster than
$L_{0.75}$-CP-negProx since the projections in $L_{0.75}$-CP-negProx are more
difficult to compute than those in $L_{0.75}$-negProx.

\begin{table}[!htbp]
 \centering
 \linespread{1.0}
 \caption{Numerical results of  $L_{0.75}$-negProx, $L_{0.75}$-CP, $L_{0.75}$-CP-negProx  for 19 problem instances with $n < 80$ in QAPLIB } \label{table:lp:negProx:less80}
\begin{footnotesize}
  \begin{tabular}{@{}l @{\hspace{1.5mm}}r @{\hspace{0mm}}c @{\hspace{6.0mm}}   r @{\hspace{2.5mm}}  r @{\hspace{0mm}} c @{\hspace{6.0mm}} r @{\hspace{2.5mm}}  r @{\hspace{0mm}} c @{\hspace{6.0mm}}  r @{\hspace{2.5mm}}  r @{\hspace{0mm}} c@{\hspace{6.0mm}} r@{\hspace{2.5mm}}r@{}}
\hline
 Problem   &  &&   \multicolumn{2}{c}{$L_{0.75}$}  &&  \multicolumn{2}{c}{$L_{0.75}$-}  &&  \multicolumn{2}{c}{$L_{0.75}$-} &&  \multicolumn{2}{c}{$L_{0.75}$-} \\
  &  &&   \multicolumn{2}{c}{}  &&  \multicolumn{2}{c}{negProx}  &&  \multicolumn{2}{c}{CP} &&  \multicolumn{2}{c}{CP-negProx} \\
\cline{1-2} \cline{4-5}    \cline{7-8} \cline{10-11}  \cline{13-14}
\Gape[4pt] name &  Rgap   &&   gap&  time &&  gap&  time&&  gap&  time &  &   gap  & time  \\
 \Xhline{0.8pt}
    bur26a  & 9.6 &&0.1212 & 0.8  &&\textbf{0.0000} & 2.6  &&0.0889 & 3.3  &&\textbf{0.0000} & 3.0   \\ 
    bur26b  &10.4 &&0.1769 & 1.0  &&\textbf{0.0000} & 3.0  &&0.1716 & 4.6  &&\textbf{0.0000} & 3.1   \\ 
    bur26d  &10.2 &&0.0020 & 0.6  &&\textbf{0.0000} & 3.6  &&0.0020 & 3.3  &&\textbf{0.0000} & 4.3   \\ 
    bur26e  &10.3 &&0.0091 & 0.7  &&\textbf{0.0000} & 2.2  &&\textbf{0.0000} & 4.0  &&\textbf{0.0000} & 2.5   \\ 
    chr15c  &546.6 &&30.9554 & 0.4  &&20.1178 & 3.9  &&30.9554 & 2.0  &&\textbf{0.0000} & 3.6   \\ 
    chr20c  &656.5 &&18.2152 & 0.4  &&\textbf{0.0000} & 3.5  &&18.2152 & 1.7  &&\textbf{0.0000} & 4.3   \\ 
    esc32b  &171.0 &&9.5238 & 1.8  &&\textbf{0.0000} &10.5  &&\textbf{0.0000} &19.9  &&\textbf{0.0000} &10.6   \\ 
     had20  &12.2 &&0.0289 & 0.5  &&\textbf{0.0000} & 2.2  &&0.0867 & 2.5  &&\textbf{0.0000} & 2.6   \\ 
    kra30a  &51.5 &&0.8999 & 1.1  &&1.3498 &10.2  &&1.5298 &12.4  &&\textbf{0.0000} &10.8   \\ 
   lipa20a  & 7.0 &&2.0092 & 0.1  &&\textbf{0.0000} & 2.3  &&1.8192 & 0.6  &&\textbf{0.0000} & 4.6   \\ 
     nug14  &34.5 &&0.1972 & 0.3  &&\textbf{0.0000} & 1.7  &&0.1972 & 1.3  &&\textbf{0.0000} & 1.9   \\ 
     nug18  &32.9 &&0.4145 & 0.5  &&\textbf{0.0000} & 4.3  &&0.4145 & 2.0  &&\textbf{0.0000} & 5.6   \\ 
     nug28  &34.2 &&0.1549 & 1.3  &&\textbf{0.0000} & 5.5  &&\textbf{0.0000} & 6.0  &&\textbf{0.0000} & 6.7   \\ 
     scr12  &89.2 &&2.3368 & 0.1  &&\textbf{0.0000} & 1.0  &&\textbf{0.0000} & 0.9  &&\textbf{0.0000} & 1.4   \\ 
     scr20  &105.7 &&0.6525 & 0.3  &&\textbf{0.0000} & 4.0  &&0.0254 & 2.4  &&\textbf{0.0000} & 2.5   \\ 
    ste36b  &439.0 &&4.5672 & 0.7  &&\textbf{0.0000} & 4.0  &&1.1734 & 3.1  &&1.5519 & 8.3   \\ 
    tai20b  &163.6 &&0.4526 & 0.3  &&\textbf{0.0000} & 2.3  &&0.4526 & 1.4  &&\textbf{0.0000} & 2.8   \\ 
    tai30a  &21.0 &&1.7887 & 2.2  &&\textbf{0.0000} & 7.7  &&1.7887 &10.6  &&\textbf{0.0000} & 9.0   \\ 
    tai64c  &59.2 &&0.0926 & 5.3  &&\textbf{0.0000} &24.2  &&\textbf{0.0000} &12.4  &&\textbf{0.0000} &21.1   \\ 
\revise{$N_{\best}$}&&&\multicolumn{1}{c}{\revise{ 0}} && &\multicolumn{1}{c}{\revise{17}} && &\multicolumn{1}{c}{\revise{5}} && &\multicolumn{1}{c}{\revise{18}} & \\ 
\hline
     \end{tabular}
\end{footnotesize}
\end{table}

\begin{table}[!htbp]
 \centering
 \linespread{1.0}
   \caption{Numerical results of  $L_{0.75}$-negProx, $L_{0.75}$-CP, $L_{0.75}$-CP-negProx  for 14 problem instances with $n \geq 80$ in QAPLIB } \label{table:lp:negProx:greater80}
\begin{footnotesize}
  \begin{tabular}{@{}l @{\hspace{1.5mm}}r @{\hspace{0mm}}c @{\hspace{6.0mm}}   r @{\hspace{2.5mm}}  r @{\hspace{0mm}} c @{\hspace{6.0mm}} r @{\hspace{2.5mm}}  r @{\hspace{0mm}} c @{\hspace{6.0mm}}  r @{\hspace{2.5mm}}  r @{\hspace{0mm}} c@{\hspace{6.0mm}} r@{\hspace{2.5mm}}r@{}}
\hline
 Problem   &  &&   \multicolumn{2}{c}{$L_{0.75}$}  &&  \multicolumn{2}{c}{$L_{0.75}$-}  &&  \multicolumn{2}{c}{$L_{0.75}$-} &&  \multicolumn{2}{c}{$L_{0.75}$-} \\
  &  &&   \multicolumn{2}{c}{}  &&  \multicolumn{2}{c}{negProx}  &&  \multicolumn{2}{c}{CP} &&  \multicolumn{2}{c}{CP-negProx} \\
\cline{1-2} \cline{4-5}    \cline{7-8} \cline{10-11}  \cline{13-14}
\Gape[4pt] name &  Rgap   &&   gap&  time &&  gap&  time&&  gap&  time &  &   gap  & time  \\
 \Xhline{0.8pt}
   lipa80a  & 2.1 &&0.7540 & 0.4  &&0.7255 & 3.6  &&\textbf{0.7168} & 2.8  &&0.7255 & 6.3   \\ 
   lipa90a  & 1.9 &&0.7179 & 0.9  &&\textbf{0.6827} & 7.7  &&0.6960 & 3.2  &&0.6893 & 4.3   \\ 
   sko100b  &17.4 &&0.0858 &13.9  &&0.0832 &141.4  &&\textbf{0.0793} &177.9  &&0.0858 &127.8   \\ 
   sko100d  &17.5 &&0.1712 &13.9  &&\textbf{0.1043} &97.0  &&0.1712 &103.9  &&\textbf{0.1043} &157.7   \\ 
   sko100e  &18.2 &&0.0134 &14.0  &&\textbf{0.0000} &88.2  &&0.0094 &95.7  &&\textbf{0.0000} &117.7   \\ 
   sko100f  &17.3 &&0.0550 &12.3  &&\textbf{0.0523} &199.9  &&0.0550 &95.7  &&0.0550 &92.6   \\ 
     sko81  &19.2 &&0.1143 &12.2  &&\textbf{0.0813} &71.0  &&0.1143 &50.4  &&\textbf{0.0813} &94.1   \\ 
   tai100b  &50.9 &&0.3800 &13.7  &&\textbf{0.2237} &62.8  &&0.3789 &76.9  &&\textbf{0.2237} &90.7   \\ 
   tai150b  &30.6 &&0.5098 &86.2  &&\textbf{0.1422} &221.5  &&0.4995 &836.6  &&0.1462 &330.2   \\ 
   tai256c  &19.8 &&0.2610 &24.6  &&0.1112 &311.0  &&\textbf{0.1041} &493.2  &&0.1581 &277.3   \\ 
    tai80a  &15.6 &&0.6904 &20.0  &&\textbf{0.4734} &111.8  &&0.6904 &93.6  &&\textbf{0.4734} &129.0   \\ 
    tai80b  &51.9 &&0.0378 & 6.7  &&0.0378 &36.0  &&\textbf{0.0336} &53.4  &&0.0378 &53.6   \\ 
    tho150  &20.6 &&0.1500 &53.4  &&0.1302 &225.7  &&0.1302 &377.5  &&\textbf{0.1223} &478.2   \\ 
    wil100  & 9.8 &&0.0146 &21.0  &&0.0146 &91.7  &&\textbf{0.0132} &177.9  &&\textbf{0.0132} &277.1   \\ 
\revise{$N_{\best}$}&&&\multicolumn{1}{c}{\revise{ 0}} && &\multicolumn{1}{c}{\revise{ 8}} && &\multicolumn{1}{c}{\revise{ 5}} && &\multicolumn{1}{c}{\revise{ 7}} & \\ 
\hline
     \end{tabular}
\end{footnotesize}
\end{table}

\subsection{Comparisons of $L_{0.75}$, Ro-TS, and their hybrid $L_{0.75}$-Ro-TS} \label{subsection:lp:comp}
 To illustrate the efficiency and effectiveness of our proposed $L_{0.75}$
 regularization algorithm (Algorithm \ref{alg:lp:practical} with $p=0.75$), we
 compare it with one of the state-of-the-art heuristics
 Ro-TS\footnote{Downloadable from
 \url{http://mistic.heig-vd.ch/taillard/codes.dir/tabou_qap2.c}} in this
 subsection. Ro-TS implements the robust taboo search for solving QAP. For QAP
 problem instances with $n<80$, we set the parameters in Ro-TS to be
 \verb|nr_iterations = 1500*n; nr_resolutions = 10|; for instances with $n \geq
 80$, we set \verb|nr_iterations = 500*n; nr_resolutions = 10.| We also test a
 hybrid, named as $L_{0.75}$-Ro-TS, of $L_{0.75}$ and Ro-TS by using the
 permutation matrix returned by $L_{0.75}$ as the input of Ro-TS. The
 combination of $L_{0.75}$-negProx, $L_{0.75}$-CP, and $L_{0.75}$-CP-negProx
 with Ro-TS can improve the performance similarly but their results are not
 shown for the simplicity of presentation.

In the tables in this subsection, the numbers ``min gap'', ``mean gap'', and
``max gap'' denote the minimum, mean, and maximum gap of Ro-TS among 10 runs,
respectively; ``nfe'' denotes the total number of the objective function
evaluations of  \eqref{equ:prob:qap:lp:epsilon} with $p={0.75}$;
`$\overline{\text{time}}$'' denotes the mean running time in seconds of
different algorithms. Note that the implementation of Ro-TS is in the C
language, while our proposed algorithm is implemented in the MATLAB environment.
Hence, the comparison of the running time is more favorable for Ro-TS.

In Table \ref{table:lp:tabu:less80}, we present numerical results on 51 problem instances in QAPLIB for which our proposed algorithm $L_{0.75}$ is able to achieve the optimal value or the best  known upper bound.
 It can be observed from Table \ref{table:lp:tabu:less80} that the time used by
 our algorithm is comparable to that of Ro-TS.
 Particularly, for the problem instance ``esc128'',
 our algorithm is twice faster than Ro-TS.

\begin{center}
 \linespread{1.0}
 \begin{footnotesize}
  \begin{longtable}[!htbp]{@{}l @{\hspace{3.0mm}} r @{\hspace{3.0mm}} r @{\hspace{0mm}}c @{\hspace{5.0mm}} r @{\hspace{3.0mm}} r @{\hspace{3.0mm}}  r @{\hspace{0mm}} c@{\hspace{5.0mm}} r@{\hspace{3.0mm}}r@{}}
 \caption{Numerical results of $L_{0.75}$ and Ro-TS on 51 problem instances in QAPLIB}\label{table:lp:tabu:less80}
   ~\\[0.6pt]
\hline
\Gape[4pt] Problem &   &  &&   \multicolumn{3}{c}{$L_{0.75}$} &&  \multicolumn{2}{c}{Ro-TS, 10 runs} \\
\cline{1-3} \cline{5-7}    \cline{9-10}
\Gape[5pt] name & $\mathrm{obj}^*$ &  Rgap   &&  gap&  time &nfe &  &   gap  & $\overline{\text{time}}$  \\
 & &     &&  &     & &  &    (min, mean, max) &  \\
   \Xcline{1-10}{0.8pt}
 \endfirsthead
 \caption{continued from the previous page}
 \\[6pt]
\hline
\Gape[4pt] Problem &   &  &&   \multicolumn{3}{c}{$L_{0.75}$} &&  \multicolumn{2}{c}{Ro-TS, 10 runs} \\
\cline{1-3} \cline{5-7}    \cline{9-10}
\Gape[5pt] name & $\mathrm{obj}^*$ &  Rgap   &&  gap&  time &nfe &  &   gap  &$\overline{\text{time}}$ \\
 & &     &&  &     & &  &    (min, mean, max) &   \\
  \Xcline{1-10}{0.8pt}
 \endhead
 \multicolumn{10}{r}{Continued on next page}
 \\ \hline
\endfoot
\endlastfoot
    bur26c &    5426795 & 9.5 &&\textbf{0.0000} & 0.7  & 161 && (\textbf{0.0000}, 0.0004, 0.0036) &  1.0\\ 
    bur26f &    3782044 &11.3 &&\textbf{0.0000} & 1.2  & 246 && (\textbf{0.0000}, 0.0001, 0.0006) &  1.0\\ 
    bur26g &   10117172 & 9.9 &&\textbf{0.0000} & 0.6  & 147 && (\textbf{0.0000}, 0.0000, 0.0000) &  1.0\\ 
    bur26h &    7098658 &10.9 &&\textbf{0.0000} & 0.6  & 149 && (\textbf{0.0000}, 0.0003, 0.0035) &  1.0\\ 
    chr12a &       9552 &372.5 &&\textbf{0.0000} & 0.3  & 185 && (\textbf{0.0000}, 0.0000, 0.0000) &  0.1\\ 
    chr12b &       9742 &363.4 &&\textbf{0.0000} & 0.2  & 152 && (\textbf{0.0000}, 0.0000, 0.0000) &  0.1\\ 
    chr18b &       1534 &199.9 &&\textbf{0.0000} & 1.0  & 366 && (\textbf{0.0000}, 0.0000, 0.0000) &  0.3\\ 
    esc16a &         68 &63.3 &&\textbf{0.0000} & 0.4  & 199 && (\textbf{0.0000}, 0.0000, 0.0000) &  0.2\\ 
    esc16b &        292 & 7.9 &&\textbf{0.0000} & 3.8  & 269 && (\textbf{0.0000}, 0.0000, 0.0000) &  0.2\\ 
    esc16c &        160 &55.8 &&\textbf{0.0000} & 0.4  & 195 && (\textbf{0.0000}, 0.0000, 0.0000) &  0.2\\ 
    esc16d &         16 &226.0 &&\textbf{0.0000} & 0.7  & 308 && (\textbf{0.0000}, 0.0000, 0.0000) &  0.2\\ 
    esc16e &         28 &118.5 &&\textbf{0.0000} & 0.3  & 158 && (\textbf{0.0000}, 0.0000, 0.0000) &  0.2\\ 
    esc16g &         26 &152.6 &&\textbf{0.0000} & 0.3  & 120 && (\textbf{0.0000}, 0.0000, 0.0000) &  0.2\\ 
    esc16h &        996 &41.6 &&\textbf{0.0000} & 0.4  & 187 && (\textbf{0.0000}, 0.0000, 0.0000) &  0.2\\ 
    esc16i &         14 &288.7 &&\textbf{0.0000} & 0.3  & 132 && (\textbf{0.0000}, 0.0000, 0.0000) &  0.2\\ 
    esc16j &          8 &268.0 &&\textbf{0.0000} & 0.3  & 160 && (\textbf{0.0000}, 0.0000, 0.0000) &  0.2\\ 
    esc32c &        642 &45.3 &&\textbf{0.0000} & 1.0  & 227 && (\textbf{0.0000}, 0.0000, 0.0000) &  1.8\\ 
    esc32d &        200 &80.2 &&\textbf{0.0000} & 1.0  & 237 && (\textbf{0.0000}, 0.0000, 0.0000) &  1.8\\ 
    esc32e &          2 &2428.2 &&\textbf{0.0000} & 0.8  & 187 && (\textbf{0.0000}, 0.0000, 0.0000) &  1.8\\ 
    esc32g &          6 &637.6 &&\textbf{0.0000} & 0.7  & 151 && (\textbf{0.0000}, 0.0000, 0.0000) &  1.8\\ 
    esc32h &        438 &53.7 &&\textbf{0.0000} & 1.1  & 242 && (\textbf{0.0000}, 0.0000, 0.0000) &  1.8\\ 
    esc64a &        116 &140.1 &&\textbf{0.0000} & 5.7  & 443 && (\textbf{0.0000}, 0.0000, 0.0000) & 14.9\\ 
     had12 &       1652 &14.3 &&\textbf{0.0000} & 0.3  & 181 && (\textbf{0.0000}, 0.0000, 0.0000) &  0.1\\ 
     had14 &       2724 &15.7 &&\textbf{0.0000} & 0.2  & 128 && (\textbf{0.0000}, 0.0000, 0.0000) &  0.1\\ 
     had16 &       3720 &13.6 &&\textbf{0.0000} & 0.4  & 171 && (\textbf{0.0000}, 0.0000, 0.0000) &  0.2\\ 
     had18 &       5358 &11.8 &&\textbf{0.0000} & 0.4  & 177 && (\textbf{0.0000}, 0.0000, 0.0000) &  0.3\\ 
     kra32 &      88700 &54.6 &&\textbf{0.0000} & 1.0  & 192 && (\textbf{0.0000}, 0.0000, 0.0000) &  1.8\\ 
   lipa20b &      27076 &31.3 &&\textbf{0.0000} & 0.3  & 131 && (\textbf{0.0000}, 0.0000, 0.0000) &  0.4\\ 
   lipa30b &     151426 &29.2 &&\textbf{0.0000} & 0.4  & 108 && (\textbf{0.0000}, 0.0000, 0.0000) &  1.5\\ 
   lipa40b &     476581 &30.4 &&\textbf{0.0000} & 0.5  &  83 && (\textbf{0.0000}, 0.0000, 0.0000) &  3.6\\ 
   lipa50b &    1210244 &28.9 &&\textbf{0.0000} & 0.9  &  94 && (\textbf{0.0000}, 0.0000, 0.0000) &  7.1\\ 
   lipa60b &    2520135 &29.9 &&\textbf{0.0000} & 1.6  & 113 && (\textbf{0.0000}, 0.0000, 0.0000) & 12.3\\ 
   lipa70b &    4603200 &30.1 &&\textbf{0.0000} & 1.5  &  84 && (\textbf{0.0000}, 0.0000, 0.0000) & 19.6\\ 
     nug12 &        578 &40.5 &&\textbf{0.0000} & 0.3  & 191 && (\textbf{0.0000}, 0.0000, 0.0000) &  0.1\\ 
     nug15 &       1150 &37.8 &&\textbf{0.0000} & 0.4  & 220 && (\textbf{0.0000}, 0.0000, 0.0000) &  0.2\\ 
    nug16b &       1240 &39.3 &&\textbf{0.0000} & 0.3  & 152 && (\textbf{0.0000}, 0.0000, 0.0000) &  0.2\\ 
     nug20 &       2570 &32.6 &&\textbf{0.0000} & 0.6  & 205 && (\textbf{0.0000}, 0.0000, 0.0000) &  0.4\\ 
     nug21 &       2438 &40.3 &&\textbf{0.0000} & 0.5  & 162 && (\textbf{0.0000}, 0.0000, 0.0000) &  0.5\\ 
     nug22 &       3596 &43.2 &&\textbf{0.0000} & 0.6  & 186 && (\textbf{0.0000}, 0.0000, 0.0000) &  0.6\\ 
     nug24 &       3488 &36.7 &&\textbf{0.0000} & 0.7  & 191 && (\textbf{0.0000}, 0.0000, 0.0000) &  0.8\\ 
     nug25 &       3744 &33.7 &&\textbf{0.0000} & 0.6  & 158 && (\textbf{0.0000}, 0.0000, 0.0000) &  0.9\\ 
     nug27 &       5234 &36.2 &&\textbf{0.0000} & 0.6  & 147 && (\textbf{0.0000}, 0.0000, 0.0000) &  1.1\\ 
     rou15 &     354210 &32.2 &&\textbf{0.0000} & 0.9  & 544 && (\textbf{0.0000}, 0.0000, 0.0000) &  0.2\\ 
     scr15 &      51140 &99.5 &&\textbf{0.0000} & 0.3  & 149 && (\textbf{0.0000}, 0.0000, 0.0000) &  0.2\\ 
    tai10a &     135028 &38.4 &&\textbf{0.0000} & 0.3  & 177 && (\textbf{0.0000}, 0.0000, 0.0000) &  0.1\\ 
    tai12a &     224416 &39.3 &&\textbf{0.0000} & 0.3  & 159 && (\textbf{0.0000}, 0.0000, 0.0000) &  0.1\\ 
    tai12b &   39464925 &111.8 &&\textbf{0.0000} & 0.3  & 178 && (\textbf{0.0000}, 0.0000, 0.0000) &  0.1\\ 
    tai15b &   51765268 &676.6 &&\textbf{0.0000} & 0.2  &  70 && (\textbf{0.0000}, 0.0000, 0.0000) &  0.2\\ 
    esc128 &         64 &397.6 &&\textbf{0.0000} &19.8  & 390 && (\textbf{0.0000}, 13.4375, 28.1250) & 41.5\\ 
   lipa80b &    7763962 &30.8 &&\textbf{0.0000} & 3.1  & 163 && (\textbf{0.0000}, 0.0000, 0.0000) &  9.8\\ 
   lipa90b &   12490441 &30.5 &&\textbf{0.0000} & 2.2  &  84 && (\textbf{0.0000}, 0.0000, 0.0000) & 14.1\\ 
\revise{$N_{\best}$}&& &&\multicolumn{1}{l}{\quad \revise{51}} & & &&\multicolumn{1}{l}{\quad  \revise{51}} & \\ 
\hline
     \end{longtable}
\end{footnotesize}
\end{center}

There are still 65 problem instances with $n < 80$, for  which $L_{0.75}$ cannot
achieve the optimal value or the best  known upper bound. For the sake of saving space, we only report in Table \ref{table:lp:tabu:less80:combination:2} the results on problem instances, for which the maximal gap returned by $L_{0.75}$-Ro-TS or  Ro-TS in 10 runs is greater than zero. For these problems, although $L_{0.75}$ does not perform as well as Ro-TS, the hybrid $L_{0.75}$-Ro-TS can achieve a satisfactory performance (compared with Ro-TS).
Note that the time of $L_{0.75}$-Ro-TS does not include the time of $L_{0.75}$.

\begin{center}
 \linespread{1.0}
\begin{footnotesize}
  \begin{longtable}{@{}l @{\hspace{0.2mm}}r @{\hspace{0mm}}c @{\hspace{3.0mm}}   r @{\hspace{1.2mm}}r @{\hspace{1.8mm}}  r @{\hspace{0mm}} c @{\hspace{3.4mm}}  r @{\hspace{1.2mm}}  r @{\hspace{0mm}} c@{\hspace{3.0mm}} r@{\hspace{1.2mm}}r@{}}
     \caption{Numerical results of $L_{0.75}$, $L_{0.75}$-Ro-TS, and Ro-TS on 41 problem instances with $n < 80$ in QAPLIB } \label{table:lp:tabu:less80:combination:2}
     ~\\[0.6pt]
\hline
 \Gape[4pt]Problem   &  &&   \multicolumn{3}{c}{$L_{0.75}$}  &&  \multicolumn{2}{c}{$L_{0.75}$-Ro-TS, 10 runs} &&  \multicolumn{2}{c}{Ro-TS, 10 runs} \\
\cline{1-2} \cline{4-6}    \cline{8-9} \cline{11-12}
\Gape[5pt] name &  Rgap   &&   gap&  time & nfe &&  gap&  $\overline{\text{time}}$&  &   gap  & $\overline{\text{time}}$  \\
 &      &&  &     &  &&  (min, mean, max) &  &     &  (min, mean, max) & \\
 \Xcline{1-12}{0.8pt}
 \endfirsthead
  \caption{continued from the previous page}  ~\\[0.6pt]
 \hline
 \Gape[4pt]Problem   &  &&   \multicolumn{3}{c}{$L_{0.75}$}  &&  \multicolumn{2}{c}{$L_{0.75}$-Ro-TS, 10 runs} &&  \multicolumn{2}{c}{Ro-TS, 10 runs} \\
\cline{1-2} \cline{4-6}    \cline{8-9} \cline{11-12}
\Gape[5pt] name &  Rgap   &&   gap&  time & nfe &&  gap&  $\overline{\text{time}}$&  &   gap  & $\overline{\text{time}}$  \\
 &      &&  &     &  &&  (min, mean, max) &  &     &  (min, mean, max) & \\
 \Xcline{1-12}{0.8pt}
 \endhead
  \multicolumn{12}{r}{Continued on next page} \\ \hline
\endfoot
\endlastfoot
    bur26a & 9.6 &&0.1212  & 0.8 & 188 && (\textbf{0.0000}, 0.0137, 0.0845) &  1.0  && (\textbf{0.0000}, 0.0000, 0.0000) &  1.0\\ 
    bur26b &10.4 &&0.1769  & 1.0 & 259 && (\textbf{0.0000}, 0.0022, 0.0189) &  1.0  && (\textbf{0.0000}, 0.0000, 0.0000) &  1.0\\ 
    bur26d &10.2 &&0.0020  & 0.6 & 152 && (\textbf{0.0000}, 0.0001, 0.0008) &  1.0  && (\textbf{0.0000}, 0.0006, 0.0021) &  1.0\\ 
    chr15a &520.4 &&0.4042  & 0.5 & 276 && (\textbf{0.0000}, 0.0404, 0.4042) &  0.2  && (\textbf{0.0000}, 0.0829, 0.8286) &  0.2\\ 
    chr15b &668.1 &&17.6971  & 0.3 & 136 && (\textbf{0.0000}, 0.5507, 2.7534) &  0.2  && (\textbf{0.0000}, 0.0000, 0.0000) &  0.2\\ 
    chr15c &546.6 &&30.9554  & 0.4 & 243 && (\textbf{0.0000}, 0.0000, 0.0000) &  0.2  && (\textbf{0.0000}, 0.6355, 6.3552) &  0.2\\ 
    chr18a &600.0 &&11.7679  & 0.5 & 221 && (\textbf{0.0000}, 0.0000, 0.0000) &  0.3  && (\textbf{0.0000}, 0.0396, 0.3965) &  0.3\\ 
    chr20a &388.5 &&12.5000  & 0.8 & 274 && (\textbf{0.0000}, 2.4453, 6.3869) &  0.4  && (\textbf{0.0000}, 1.8887, 5.4745) &  0.4\\ 
    chr20b &366.0 &&13.8381  & 0.7 & 271 && (3.0461, 4.4822, 5.9182) &  0.4  && (\textbf{0.0000}, 4.3777, 6.6144) &  0.4\\ 
    chr20c &656.5 &&18.2152  & 0.4 & 141 && (\textbf{0.0000}, 2.9359, 10.4370) &  0.4  && (\textbf{0.0000}, 1.8894, 4.7235) &  0.4\\ 
    chr22a &153.8 &&5.7180  & 0.5 & 163 && (\textbf{0.0000}, 0.7797, 1.8194) &  0.6  && (\textbf{0.0000}, 0.6790, 2.1767) &  0.6\\ 
    chr22b &152.3 &&5.2954  & 0.6 & 192 && (\textbf{0.0000}, 0.9106, 1.8405) &  0.6  &&(0.7104, 1.3497, 1.8082) &  0.6\\ 
    chr25a &423.8 &&25.2371  & 0.9 & 279 && (\textbf{0.0000}, 7.1286, 11.6965) &  0.9  &&(2.3182, 5.8061, 10.6428) &  0.9\\ 
    esc32a &233.2 &&1.5385  & 3.1 & 440 && (\textbf{0.0000}, 0.6154, 1.5385) &  1.8  && (\textbf{0.0000}, 0.0000, 0.0000) &  1.8\\ 
    kra30a &51.5 &&0.8999  & 1.1 & 222 && (\textbf{0.0000}, 0.0000, 0.0000) &  1.5  && (\textbf{0.0000}, 0.3690, 1.3386) &  1.5\\ 
    kra30b &49.9 &&0.3172  & 1.3 & 274 && (\textbf{0.0000}, 0.0077, 0.0766) &  1.5  && (\textbf{0.0000}, 0.0077, 0.0766) &  1.5\\ 
   lipa60a & 2.7 &&0.9858  & 0.3 & 38 && (\textbf{0.0000}, 0.0671, 0.6706) & 12.3  && (\textbf{0.0000}, 0.0000, 0.0000) & 12.3\\ 
   lipa70a & 2.4 &&0.8807  & 0.3 & 36 && (\textbf{0.0000}, 0.0596, 0.5956) & 19.7  && (\textbf{0.0000}, 0.0590, 0.5903) & 19.6\\ 
     nug30 &32.8 &&0.0653  & 1.0 & 205 && (\textbf{0.0000}, 0.0000, 0.0000) &  1.5  && (\textbf{0.0000}, 0.0131, 0.0653) &  1.5\\ 
     rou20 &25.5 &&0.2021  & 0.9 & 362 && (\textbf{0.0000}, 0.0233, 0.1778) &  0.4  && (\textbf{0.0000}, 0.0074, 0.0193) &  0.4\\ 
     sko42 &26.9 &&0.2150  & 2.6 & 352 && (\textbf{0.0000}, 0.0076, 0.0253) &  4.2  && (\textbf{0.0000}, 0.0051, 0.0253) &  4.2\\ 
     sko49 &24.2 &&0.1197  & 2.9 & 284 && (\textbf{0.0000}, 0.0735, 0.1197) &  6.7  &&(0.0086, 0.0804, 0.1368) &  6.7\\ 
     sko56 &23.8 &&0.0348  & 5.2 & 327 && (0.0348, 0.0348, 0.0348) & 10.0  && (\textbf{0.0116}, 0.0818, 0.2264) & 10.0\\ 
     sko64 &21.3 &&0.0206  & 5.7 & 321 && (0.0124, 0.0198, 0.0206) & 15.0  && (\textbf{0.0041}, 0.0499, 0.1361) & 15.0\\ 
     sko72 &20.3 &&0.0664  & 7.7 & 380 && (0.0211, 0.0580, 0.0664) & 21.3  && (\textbf{0.0060}, 0.1210, 0.3139) & 21.4\\ 
    ste36a &138.8 &&2.2045  & 1.0 & 141 && (\textbf{0.0000}, 0.0042, 0.0420) &  2.6  && (\textbf{0.0000}, 0.0945, 0.2939) &  2.6\\ 
    ste36c &127.7 &&2.3769  & 1.2 & 183 && (\textbf{0.0000}, 0.0011, 0.0112) &  2.6  && (\textbf{0.0000}, 0.0188, 0.1883) &  2.6\\ 
    tai20a &27.5 &&0.9763  & 1.1 & 488 && (\textbf{0.0000}, 0.1852, 0.4697) &  0.4  && (\textbf{0.0000}, 0.1825, 0.3042) &  0.4\\ 
    tai25a &23.9 &&2.1781  & 1.5 & 498 && (\textbf{0.0000}, 0.0884, 0.3658) &  0.9  && (\textbf{0.0000}, 0.3898, 0.8721) &  0.9\\ 
    tai30a &21.0 &&1.7887  & 2.2 & 565 && (\textbf{0.0000}, 0.3436, 0.7806) &  1.5  && (\textbf{0.0000}, 0.3254, 0.6595) &  1.5\\ 
    tai30b &106.7 &&2.5101  & 0.9 & 183 && (\textbf{0.0000}, 0.0406, 0.1456) &  1.5  && (\textbf{0.0000}, 0.0782, 0.2637) &  1.5\\ 
    tai35a &21.1 &&2.3491  & 3.9 & 801 && (0.1642, 0.5192, 0.8617) &  2.4  && (\textbf{0.0671}, 0.5541, 0.9723) &  2.4\\ 
    tai35b &82.1 &&4.7863  & 1.3 & 217 && (\textbf{0.0000}, 0.0569, 0.2153) &  2.4  && (\textbf{0.0000}, 0.1596, 0.9763) &  2.4\\ 
    tai40a &20.6 &&1.6882  & 3.1 & 494 && (\textbf{0.3336}, 0.6905, 0.9928) &  3.6  &&(0.3753, 0.7424, 1.0622) &  3.6\\ 
    tai40b &77.8 &&0.0051  & 1.6 & 202 && (\textbf{0.0000}, 0.0000, 0.0000) &  3.6  && (\textbf{0.0000}, 0.2639, 2.1123) &  3.6\\ 
    tai50a &19.5 &&1.1252  & 6.2 & 715 && (\textbf{0.9638}, 1.0725, 1.1252) &  7.1  &&(0.9764, 1.2008, 1.4282) &  7.1\\ 
    tai50b &72.0 &&0.5660  & 5.1 & 591 && (0.0106, 0.3520, 0.5660) &  7.1  && (\textbf{0.0029}, 0.1101, 0.4161) &  7.1\\ 
    tai60a &18.2 &&1.0398  & 8.5 & 476 && (\textbf{0.8591}, 0.9631, 1.0398) & 12.3  &&(0.9338, 1.1456, 1.3461) & 12.3\\ 
    tai60b &66.0 &&0.0986  & 5.6 & 478 && (\textbf{0.0000}, 0.0732, 0.0986) & 12.3  &&(0.0010, 0.2838, 0.9002) & 12.3\\ 
     tho40 &42.0 &&0.2545  & 2.2 & 292 && (\textbf{0.0000}, 0.0523, 0.1172) &  3.6  && (\textbf{0.0000}, 0.0369, 0.0632) &  3.6\\ 
     wil50 &13.7 &&0.1311  & 6.9 & 799 && (\textbf{0.0000}, 0.0238, 0.0533) &  7.1  && (\textbf{0.0000}, 0.0315, 0.1024) &  7.1\\ 
\revise{$N_{\best}$}&&&\multicolumn{1}{l}{\quad \revise{ 0}} & & &&\multicolumn{1}{l}{\quad \revise{35}} & && \multicolumn{1}{l}{\quad  \revise{34}} & \\ 
\hline
     \end{longtable}
\end{footnotesize}
\end{center}

In Table \ref{table:lp:tabu:greater80:combination}, we present numerical results
on 21 problem instances with $n \geq 80.$ Table \ref{table:lp:tabu:greater80:combination} shows that our proposed $L_{0.75}$ regularization algorithm significantly outperforms Ro-TS in terms of both the solution quality and the speed for solving these problems. More specifically, our algorithm can find a permutation matrix whose gap is less than $0.8\%$ for all 21 problem instances and whose gap is less than $0.1\%$ for 11 problem instances. In contrast, the corresponding two numbers for Ro-TS (min) are 18 and 9 and for Ro-TS (mean) are only 15 and 2. For the largest problem instance ``tai256c'', our proposed algorithm is able to find a solution with gap $0.2610\%$ while the best gap returned by Ro-TS (among 10 runs) is $0.3169\%$, and our algorithm is 15 times faster than Ro-TS.

 \begin{table}[!htbp]
 \centering
 \linespread{1.0}
    \caption{Numerical results of $L_{0.75}$, $L_{0.75}$-Ro-TS, Ro-TS on 21 problem instances with $n \geq 80$ in QAPLIB} \label{table:lp:tabu:greater80:combination}
\begin{footnotesize}
  \begin{tabular}{@{}l @{\hspace{0.2mm}}r @{\hspace{0mm}}c @{\hspace{2.8mm}}   r @{\hspace{1.0mm}}r @{\hspace{1.0mm}}  r @{\hspace{0mm}} c @{\hspace{2.8mm}}  r @{\hspace{1.0mm}}  r @{\hspace{0mm}} c@{\hspace{2.8mm}} r@{\hspace{1.0mm}}r@{}}
\hline
 \Gape[4pt]Problem   &  &&   \multicolumn{3}{c}{$L_{0.75}$}  &&  \multicolumn{2}{c}{$L_{0.75}$-Ro-TS, 10 runs} &&  \multicolumn{2}{c}{Ro-TS, 10 runs} \\
\cline{1-2} \cline{4-6}    \cline{8-9} \cline{11-12}
\Gape[5pt] name &  Rgap  &&   gap&  time & nfe &&  gap&  $\overline{\text{time}}$ &  &   gap  (\%)& $\overline{\text{time}}$  \\
 &     &&  &     &  &&  (min, mean, max) &  &     &  (min, mean, max) &  \\
 \Xhline{0.8pt}
    esc128 &397.6 &&\textbf{0.0000}  &19.8 & 390 && (\textbf{0.0000}, 0.0000, 0.0000) &  0.0  && (\textbf{0.0000}, 13.4375, 28.1250) & 41.5\\ 
   lipa80a & 2.1 &&0.7540  & 0.4 & 37 && (\textbf{0.0000}, 0.4808, 0.5593) &  9.8  &&(0.5016, 0.5351, 0.5644) &  9.8\\ 
   lipa80b &30.8 &&\textbf{0.0000}  & 3.1 & 163 && (\textbf{0.0000}, 0.0000, 0.0000) &  0.0  && (\textbf{0.0000}, 0.0000, 0.0000) &  9.8\\ 
   lipa90a & 1.9 &&0.7179  & 0.9 & 37 && (0.4689, 0.4872, 0.5025) & 14.0  && (\textbf{0.4453}, 0.4734, 0.4980) & 14.1\\ 
   lipa90b &30.5 &&\textbf{0.0000}  & 2.2 & 84 && (\textbf{0.0000}, 0.0000, 0.0000) &  0.0  && (\textbf{0.0000}, 0.0000, 0.0000) & 14.1\\ 
   sko100a &17.4 &&\textbf{0.0645}  &14.5 & 396 && (0.0645, 0.0645, 0.0645) & 19.4  &&(0.0961, 0.1962, 0.5987) & 19.4\\ 
   sko100b &17.4 &&0.0858  &13.9 & 371 && (\textbf{0.0195}, 0.0489, 0.0819) & 19.4  &&(0.0871, 0.2102, 0.3808) & 19.4\\ 
   sko100c &18.0 &&0.0203  &16.8 & 464 && (\textbf{0.0108}, 0.0185, 0.0203) & 19.3  &&(0.0555, 0.2252, 0.4355) & 19.4\\ 
   sko100d &17.5 &&0.1712  &13.9 & 381 && (0.0949, 0.1296, 0.1645) & 19.4  && (\textbf{0.0388}, 0.2434, 0.4279) & 19.5\\ 
   sko100e &18.2 &&\textbf{0.0134}  &14.0 & 298 && (0.0134, 0.0134, 0.0134) & 19.3  &&(0.1824, 0.4008, 0.6195) & 19.4\\ 
   sko100f &17.3 &&0.0550  &12.3 & 328 && (\textbf{0.0550}, 0.0550, 0.0550) & 19.5  &&(0.1342, 0.3073, 0.5569) & 19.4\\ 
     sko81 &19.2 &&0.1143  &12.2 & 535 && (\textbf{0.0440}, 0.0963, 0.1143) & 10.2  &&(0.1165, 0.2024, 0.3143) & 10.2\\ 
     sko90 &18.5 &&0.0554  &14.7 & 526 && (0.0554, 0.0554, 0.0554) & 14.1  && (\textbf{0.0450}, 0.2612, 0.3653) & 14.1\\ 
   tai100a &14.4 &&\textbf{0.4131}  &33.8 & 968 && (0.4131, 0.4131, 0.4131) & 19.4  &&(0.9459, 1.1252, 1.3004) & 19.4\\ 
   tai100b &50.9 &&0.3800  &13.7 & 400 && (\textbf{0.3701}, 0.3786, 0.3800) & 19.4  &&(0.4035, 1.1940, 2.8654) & 19.4\\ 
   tai150b &30.6 &&0.5098  &86.2 & 1220 && (\textbf{0.4858}, 0.4899, 0.4926) & 67.1  &&(1.8733, 3.4037, 4.4771) & 66.9\\ 
   tai256c &19.8 &&\textbf{0.2610}  &24.6 & 191 && (0.2610, 0.2610, 0.2610) & 390.3  &&(0.3169, 0.4097, 0.5283) & 387.2\\ 
    tai80a &15.6 &&0.6904  &20.0 & 894 && (\textbf{0.5827}, 0.6571, 0.6904) &  9.8  &&(1.0769, 1.2615, 1.4734) &  9.8\\ 
    tai80b &51.9 &&0.0378  & 6.7 & 329 && (\textbf{0.0283}, 0.0283, 0.0283) &  9.8  &&(0.0379, 0.9246, 2.0504) &  9.8\\ 
    tho150 &20.6 &&\textbf{0.1500}  &53.4 & 679 && (0.1500, 0.1500, 0.1500) & 67.0  &&(0.2551, 0.6223, 1.0213) & 67.1\\ 
    wil100 & 9.8 &&0.0146  &21.0 & 542 && (\textbf{0.0110}, 0.0134, 0.0139) & 19.5  &&(0.1267, 0.1762, 0.2740) & 19.4\\ 
\revise{$N_{\best}$}&&&\multicolumn{1}{l}{\quad \revise{ 8}} & & &&\multicolumn{1}{l}{\quad \revise{13}} & && \multicolumn{1}{l}{\quad \quad  \revise{ 6}} & \\ 
\hline
     \end{tabular}
\end{footnotesize}
\end{table}

\section{Application in the bandwidth minimization problem} \label{section:bm}
In this section, we apply Algorithm \ref{alg:lp:practical} to solve the
bandwidth minimization (BM) problem. It is NP-hard \cite{papadimitriou1976np} and has many applications in different fields such as sparse matrix computations, circuit design, and VLSI layout \cite{chinn1982bandwidth, lai1999survey}.  For a real symmetric matrix $A \in \Rbb^{n \times n}$, its bandwidth is defined as $\mbox{b}(A)  = \max_{A_{ij} \neq 0} |i-j|$.  The BM problem is to find a permutation matrix $X$ such that the matrix $X^{\Tsf} AX$ takes the minimum bandwidth. Define $\bar A_{ij} = 1$ if $A_{ij} \neq 0$ and $\bar A_{ij} = 0$ otherwise. Clearly, $X^{\Tsf} \bar AX$ has the same bandwidth with $X^{\Tsf} A X$. Let $B_m \in \Rbb^{n \times n}$ be the symmetric Toeplitz matrix with  $(B_m)_{ij} = \max\{ |i - j| - m, 0\}$ and
define
 \be \label{equ:prob:bw}
   h(m) = \min\limits_{X \in \Pi_n} \, \tr(X^{\Tsf} \bar AX B_m).
 \ee
Then, $h(m) = 0$ if and only if the bandwidth of $X^{\Tsf} \bar AX$ is at most $m$. Therefore, the above BM problem for a given matrix $A$ can be formulated as the problem of finding the smallest nonnegative integer root of the equation $h(m) = 0$, namely,
\be\label{equ:prob:bw:root}
 \min \ m, \quad \st \quad h(m) = 0.
\ee

The BM problem can also be defined for a given graph. More specifically, for a graph ${\cal G}=({\cal V},{\cal E})$ with ${\cal V} = \{v_1, \ldots, v_n\}$ being  the vertices and $\cal E$ being the edges,  the BM problem is to find a bijection $\phi: {\cal V} \rightarrow \Ncal$
such that the number
$\max\limits_{(i,j) \in E}\, | \phi(v_i) -  \phi(v_j)|$ is minimal.  Denote the adjacency matrix of $\cal G$ by $A_{\cal G}$. Then the BM problem for the graph $\cal G$ is equivalent to the BM  problem for the matrix $A_{\cal G}$.
For more discussions on the BM problem for graphs and matrices, please refer to \cite{chinn1982bandwidth, van2014bounding}.

Let $m^*$ be the unique solution of \eqref{equ:prob:bw:root}. By the definition
of $h(m)$, we have that  $h(i) > h(j)$ for $0 \leq i < j \leq m^*$  and $h(i) =
0$ for $i \geq m^*$.  Moreover, it is trivial that $h(n-1) = 0$. Based on the
above properties of $h(m)$, we invoke the bisection procedure to solve
\eqref{equ:prob:bw:root}, where the subproblem \eqref{equ:prob:bw} is solved by
Algorithm \ref{alg:lp:practical}. The resulting method is named as Bi-$L_p$
 and described in Algorithm \ref{alg:lp:BiLp}.

    \begin{algorithm}[H]
  Set $\underline{m} = 0, \overline{m}=  n-1$ and the integer $m_0 \in (1, n-1)$, $k = 0$.\\
 \While{$\overline{m} - \underline{m} > 1$}{
 Compute $h(m_k)$ by Algorithm \ref{alg:lp:practical}.\\
 \lIf{$h(m_k)>0$}{$\um = m_k$}
 \lIf{$h(m_k)=0$}{$\om = m_k$}
 Set  $m_k= \frac12 \lceil \um + \om \rceil$  and $k = k + 1$.
}
   \caption{An Bi-$L_p$ algorithm for BM problem \eqref{equ:prob:bw:root}.}\label{alg:lp:BiLp}
     \end{algorithm}

Theoretically, if  problem \eqref{equ:prob:bw} is solved exactly,  the above bisection procedure will
also solve the BM problem   globally.  Even if problem
\eqref{equ:prob:bw} is only approximately solved (i.e., the $h(m_k)$ obtained is only an upper bound),  Algorithm \ref{alg:lp:BiLp} would still yield an upper bound on the minimum bandwidth.  
We can also invoke Algorithms
\ref{alg:lp:cp} -- \ref{alg:lp:cp:negProx} or other solvers for QAP to compute
$h(m_k)$. For simplicity,  we only use Algorithm \ref{alg:lp:practical} with $p = 0.75$ in Algorithm \ref{alg:lp:BiLp}.

We compare  the performance of Algorithm \ref{alg:lp:BiLp} with that of three
other solvers, including the MATLAB built-in function ``symrcm'' which
implements the reverse Cuthill-McKee (rCM) algorithm \cite{george1981computer},
the code ``imprRevCMcK'' (irCM) which is an improved rCM algorithm proposed in
\cite{van2014bounding}, and ``Bi-RoTS'' which utilizes the same  algorithm
framework as Algorithm  \ref{alg:lp:BiLp} but uses Ro-TS to compute $h(m_k)$.
The initial bandwidth guess $m_0$ in Bi-$L_{0.75}$ and Bi-RoTS is set to be the
minimal bandwidth returned by symrcm. We replace the implementation of
rCM in irCM by symrcm, which can improve the  performance of rCM based on our
experience.  The method irCM consists of  a number of  independent runs, which
is set to be 2000 in our tests. Each run of irCM first
generates a random ordering of the vertices of the graph, then performs the rCM algorithm and the improvement procedure developed in \cite{van2014bounding}.  In order to investigate the impact of this
number,  we run irCM until it takes twice as much CPU time as Bi-$L_{0.75}$ on
 each test instance. We denote the resulting method as irCM+. For some other methods, one can refer to \cite{mladenovic2010variable} and references therein.

The numerical results for three different kinds of bandwidth minimization
problems are reported in Tables \ref{table:lp:bandwidth:graph} --
\ref{table:lp:bandwidth:sparse}.  In these tables,  ``$\text{bw}^{\text{r}}$''
denotes the upper bound of the minimal bandwidth returned by symrcm and  ``bw''
denotes the upper bound of the minimal bandwidth returned by the other methods.
The running time  is counted in seconds. Since  symrcm is  fast, we do not
report its running time in these tables.  The term ``nrun'' denotes the total
number of independent runs of irCM+.   For each algorithm, the term
``$N_{\best}$'' denotes  the total number of problem instances when the upper
bound of the minimal bandwidth provided by this algorithm is the smallest among the bounds  provided by all the algorithms.  In Table \ref{table:lp:bandwidth:random}, ``$k$'' denotes the minimal bandwidth, and  we use ``$\overline{\phantom{ab}}$'' to denote the average values of each term over 10 instances.

\begin{table}[!htbp]
 \centering
 \linespread{1.0}
   \caption{Numerical results on the upper bound for the bandwidth of a few symmetric graphs
   } \label{table:lp:bandwidth:graph}
\begin{footnotesize}
 \begin{tabular}{
   @{}
    l@{
\hspace{1.5mm}}    r@{\hspace{3.0mm}}    r@{\hspace{0mm}}
    c@{\hspace{4.0mm}}  r@{\hspace{3.0mm}}  r@{\hspace{0mm}}
    c@{\hspace{4.0mm}}  r@{\hspace{3.0mm}}  r@{\hspace{0mm}}
    c@{\hspace{4.0mm}}  r@{\hspace{3.0mm}}  r@{\hspace{0mm}}
    c@{\hspace{4.0mm}}  r@{\hspace{3.0mm}}  r@{}}
\hline
 Problem   &  &   &&  \multicolumn{2}{c}{Bi-$L_{0.75}$} &&  \multicolumn{2}{c}{irCM}  &&  \multicolumn{2}{c}{irCM+} &&  \multicolumn{2}{c}{Bi-RoTS} \\
\cline{1-3} \cline{5-6}    \cline{8-9} \cline{11-12}  \cline{13-15}
\Gape[4pt] name &  $n$    &  $\text{bw}^{\text{r}}$ &&  bw &  time&&  bw &  time &  &   bw  & nrun &  &   bw  & time  \\
 \Xhline{0.8pt}
$H(3, 5)$& 125&  76&& \textbf{ 60} & 49.4&& \textbf{ 60} &  6.0&& \textbf{ 60} &  32795&&  64 & 197.7\\ 
$H(3, 6)$& 216& 145&& \textbf{101} & 281.9&& \textbf{101} & 72.4&& \textbf{101} &  15533&& 113 & 860.3\\ 
$H(4, 3)$&  81& \textbf{ 35}&& \textbf{ 35} & 13.6&& \textbf{ 35} &  0.3&& \textbf{ 35} & 209742&& \textbf{ 35} & 70.3\\ 
$H(4, 4)$& 256& 114&& \textbf{113} & 329.0&& \textbf{113} &  2.0&& \textbf{113} & 655738&& 114 & 1569.8\\ 
$GH(3, 4, 5)$&  60&  33&&  30 & 17.0&& \textbf{ 29} &  0.4&& \textbf{ 29} & 176368&& \textbf{ 29} & 41.8\\ 
$GH(4, 5, 6)$& 120&  68&&  59 & 77.2&& \textbf{ 57} &  4.1&& \textbf{ 57} &  72972&&  60 & 203.8\\ 
$GH(5, 6, 7)$& 210& 135&& \textbf{ 99} & 251.9&& \textbf{ 99} & 58.1&& \textbf{ 99} &  17100&& 114 & 783.6\\ 
$J(9, 3)$&  84& \textbf{ 49}&& \textbf{ 49} & 36.2&& \textbf{ 49} &  0.9&& \textbf{ 49} & 164372&& \textbf{ 49} & 77.5\\ 
$J(10, 3)$& 120&  71&& \textbf{ 68} & 98.1&& \textbf{ 68} &  3.2&& \textbf{ 68} & 124717&& \textbf{ 68} & 239.9\\ 
$J(11, 3)$& 165&  97&& \textbf{ 92} & 241.1&& \textbf{ 92} & 10.6&& \textbf{ 92} &  90684&&  97 & 638.8\\ 
$J(12, 3)$& 220& 127&& \textbf{120} & 601.1&& \textbf{120} & 36.2&& \textbf{120} &  65944&& 127 & 457.1\\ 
$J(8, 4)$&  70&  41&& \textbf{ 40} & 26.1&& \textbf{ 40} &  0.5&& \textbf{ 40} & 215414&& \textbf{ 40} & 107.4\\ 
$J(9, 4)$& 126& \textbf{ 70}&& \textbf{ 70} & 100.1&& \textbf{ 70} &  0.9&& \textbf{ 70} & 462981&& \textbf{ 70} & 161.3\\ 
$J(10, 4)$& 210& \textbf{110}&& \textbf{110} & 297.6&& \textbf{110} &  3.7&& \textbf{110} & 325865&& \textbf{110} & 588.8\\ 
$J(11, 4)$& 330& \textbf{170}&& \textbf{170} & 865.2&& 171 & 15.3&& \textbf{170} & 228949&& \textbf{170} & 3603.8\\ 
$K(9, 3)$&  84&  66&& \textbf{ 56} & 52.2&&  59 &  3.6&&  58 &  58243&& \textbf{ 56} & 62.2\\ 
$K(10, 3)$& 120&  96&& \textbf{ 86} & 90.4&&  90 &  7.5&&  90 &  49212&&  88 & 204.5\\ 
$K(11, 3)$& 165& 135&& \textbf{125} & 164.3&& 131 & 15.0&& 131 &  43851&& 129 & 548.4\\ 
$K(12, 3)$& 220& 183&& \textbf{173} & 315.6&& 181 & 27.9&& 180 &  44361&& 183 & 228.1\\ 
$K(9, 4)$& 126&  61&& \textbf{ 51} & 40.4&&  52 &  5.2&&  52 &  31173&&  52 & 165.1\\ 
$K(10, 4)$& 210& 169&& \textbf{106} & 377.9&& 118 & 72.1&& 117 &  21061&& 132 & 588.3\\ 
$K(11, 4)$& 330& 294&& \textbf{197} & 1289.7&& 223 & 229.5&& 219 &  21995&& 221 & 1787.7\\ 
$N_{\best}$ &&   5 && 20 & && 14 & && 15 &  &&  9 & \\ 
\hline
     \end{tabular}
\end{footnotesize}
\end{table}

\begin{table}[!htbp]
 \centering
 \linespread{1.0}
   \caption{Numerical results on the upper bound for the bandwidth of random symmetric sparse matrices
   } \label{table:lp:bandwidth:random}
\begin{footnotesize}
  \begin{tabular}{
   @{}
    l@{\hspace{-1.5mm}}    r@{\hspace{3.0mm}}    r@{\hspace{0mm}}
    c@{\hspace{4.0mm}}  r@{\hspace{3.0mm}}  r@{\hspace{0mm}}
    c@{\hspace{4.0mm}}  r@{\hspace{3.0mm}}  r@{\hspace{0mm}}
    c@{\hspace{4.0mm}}  r@{\hspace{3.0mm}}  r@{\hspace{0mm}}
    c@{\hspace{4.0mm}}  r@{\hspace{3.0mm}}  r@{}}
\hline
 Problem   &  &   &&  \multicolumn{2}{c}{Bi-$L_{0.75}$} &&  \multicolumn{2}{c}{irCM}  &&  \multicolumn{2}{c}{irCM+} &&  \multicolumn{2}{c}{Bi-RoTS} \\
\cline{1-3} \cline{5-6}    \cline{8-9} \cline{11-12}  \cline{13-15}

\Gape[9pt] \ $n$ &  $k$    &  $\overline{\text{bw}^{\text{r}}}$ &&  $\overline{\text{bw}}$ &  $\overline{\text{time}}$&&  $\overline{\text{bw}}$ &  $\overline{\text{time}}$ &  &   $\overline{\text{bw}}$  & $\overline{\text{nrun}}$&  &   $\overline{\text{bw}}$ & $\overline{\text{time}}$  \vspace{-2mm}\\
 \Xhline{0.8pt}
 80 &  15& 20.8&&  \textbf{15.4} &  5.7&& 16.4 &  1.1&& 16.4 & 21129&& 15.4 & 28.9\\ 
 80 &  23& 31.6&&  \textbf{23.0} &  7.7&& 24.6 &  1.8&& 24.6 & 18648&& 23.5 & 26.9\\ 
 80 &  31& 39.9&&  \textbf{31.4} & 10.1&& 33.5 &  2.2&& 33.5 & 19925&& 31.4 & 36.0\\ 
 80 &  39& 51.0&&  \textbf{39.0} & 13.4&& 41.7 &  2.3&& 41.7 & 24668&& 39.4 & 32.4\\ 
100 &  19& 26.4&&  \textbf{19.4} & 10.5&& 20.2 &  1.7&& 20.1 & 26017&& 19.5 & 63.7\\ 
100 &  29& 38.7&&  \textbf{29.4} & 13.2&& 31.1 &  2.9&& 31.1 & 21398&& 29.6 & 65.1\\ 
100 &  39& 51.3&&  \textbf{39.4} & 16.3&& 41.9 &  3.8&& 41.9 & 18292&& 39.9 & 73.5\\ 
100 &  49& 61.6&&  \textbf{49.4} & 24.6&& 53.0 &  4.7&& 53.0 & 23037&& 49.5 & 71.1\\ 
200 &  39& 52.5&&  \textbf{39.5} & 73.4&& 41.4 & 13.8&& 41.4 & 23358&& 40.3 & 555.2\\ 
200 &  59& 75.9&&  \textbf{59.5} & 73.2&& 62.9 & 23.7&& 62.9 & 12882&& 60.8 & 718.7\\ 
200 &  79& 99.3&&  \textbf{79.4} & 91.7&& 84.1 & 35.6&& 84.1 & 11786&& 80.8 & 657.8\\ 
200 &  99& 130.3&&  \textbf{99.4} & 134.6&& 105.6 & 51.6&& 105.6 & 11191&& 101.3 & 635.5\\ 
300 &  59& 75.1&&  \textbf{59.5} & 235.0&& 63.3 & 37.3&& 63.3 & 28214&& 60.9 & 2733.1\\ 
300 &  89& 112.4&&  \textbf{89.6} & 279.2&& 94.0 & 131.3&& 94.0 & 8948&& 90.9 & 2679.4\\ 
300 & 119& 154.9&&  \textbf{119.6} & 334.0&& 125.9 & 203.5&& 125.9 & 7050&& 120.9 & 3142.0\\ 
300 & 149& 190.4&&  \textbf{149.5} & 592.5&& 157.1 & 280.9&& 157.1 & 8847&& 151.4 & 2748.5\\ 
$N_{\best}$ &&   0 && 16 & && 0 & && 0 &  &&  0 &  \\ \hline     \end{tabular}
\end{footnotesize}
\end{table}

Table \ref{table:lp:bandwidth:graph} reports the results on several symmetric graphs, including  the Hamming graph, the 3-dimensional generalized Hamming graph, the Johnson graph and the Kneser graph. For details on these graphs, one can refer to  \cite{van2014bounding}.
Table  \ref{table:lp:bandwidth:random} reports the results on some random
symmetric sparse matrices.  For each $n$ and the minimal bandwidth $k$,  we
generate 10 random matrices by the MATLAB commands
       ``A = toeplitz([ones(k,1); zeros(n-k,1)]);
        T = rand(n) $>$ 0.4;  T = (T + T');
        AS = A.*T;  ps = randperm(n);
        A = AS(ps,ps); ''
 In Table \ref{table:lp:bandwidth:sparse}, we present the results on 49 sparse matrices from the UF Sparse Matrix Collection \cite{davis2011university}.  From these three tables, we can observe that Bi-$L_{0.75}$ performs the best in terms of the solution quality while Bi-RoTS performs the worst.    In particular, for the  Kneser graphs and the random sparse matrices,   Bi-$L_{0.75}$  returns much better bounds than those of irCM and irCM+.

\begin{center}
 \linespread{1.0}
 \begin{footnotesize}
  \begin{longtable}[!htbp]{
  @{}
    l@{\hspace{1.5mm}}    r@{\hspace{3.0mm}}    r@{\hspace{0mm}}
    c@{\hspace{4.0mm}}  r@{\hspace{3.0mm}}  r@{\hspace{0mm}}
    c@{\hspace{4.0mm}}  r@{\hspace{3.0mm}}  r@{\hspace{0mm}}
    c@{\hspace{4.0mm}}  r@{\hspace{3.0mm}}  r@{\hspace{0mm}}
    c@{\hspace{4.0mm}}  r@{\hspace{3.0mm}}  r@{}}
 \caption{Numerical results on the upper bound for the bandwidth of 49 symmetric sparse matrices with $80 \leq n \leq 300$ from the UF Sparse Matrix Collection \cite{davis2011university}} \label{table:lp:bandwidth:sparse}
    ~\\[0.6pt]
 \hline
 Problem   &  &   &&  \multicolumn{2}{c}{Bi-$L_{0.75}$} &&  \multicolumn{2}{c}{irCM}  &&  \multicolumn{2}{c}{irCM+} &&  \multicolumn{2}{c}{Bi-RoTS} \\
\cline{1-3} \cline{5-6}    \cline{8-9} \cline{11-12}  \cline{13-15}
\Gape[4pt] name &  $n$    &  $\text{bw}^{\text{r}}$ &&  bw &  time&&  bw &  time &  &   bw  & nrun &  &   bw  & time  \\
 \Xcline{1-15}{0.8pt}
 \endfirsthead
   \\[6pt]
\hline
 Problem   &  &   &&  \multicolumn{2}{c}{Bi-$L_{0.75}$} &&  \multicolumn{2}{c}{irCM}  &&  \multicolumn{2}{c}{irCM+} &&  \multicolumn{2}{c}{Bi-RoTS} \\
\cline{1-3} \cline{5-6}    \cline{8-9} \cline{11-12}  \cline{13-15}
\Gape[4pt] name &  $n$    &  $\text{bw}^{\text{r}}$ &&  bw &  time&&  bw &  time &  &   bw  & nrun &  &   bw  & time  \\
 \Xcline{1-15}{0.8pt}
 \endhead
 \multicolumn{15}{r}{Continued on next page}
 \\ \hline
\endfoot
\endlastfoot
GD06\_theory & 101& 57&&  \textbf{34} & 39.8&& 35 & 15.5&& 35 & 10266&&  \textbf{34} & 65.1\\ 
   GD97\_a & 84& 23&& 18 &  9.8&& 18 &  1.1&&  \textbf{17} & 36174&&  \textbf{17} & 47.6\\ 
   GD98\_c & 112& 50&&  \textbf{26} & 34.1&& 32 & 10.9&& 32 & 12569&& 28 & 111.0\\ 
  Journals & 124& 116&&  \textbf{98} & 77.1&& 102 &  4.2&& 102 & 73658&& 103 & 151.5\\ 
Sandi\_authors & 86& 22&&  \textbf{12} & 17.3&&  \textbf{12} &  9.6&&  \textbf{12} & 7177&& 13 & 36.0\\ 
Trefethen\_150 & 150& 79&&  \textbf{67} & 91.4&& 71 &  1.3&& 70 & 289009&& 79 & 203.2\\ 
Trefethen\_200 & 200& 99&&  \textbf{87} & 167.7&& 88 &  1.5&& 88 & 461919&& 99 & 678.5\\ 
Trefethen\_200b & 199& 99&&  \textbf{86} & 243.1&& 87 &  1.5&& 87 & 643423&& 99 & 667.1\\ 
Trefethen\_300 & 300& 152&&  \textbf{132} & 652.5&&  \textbf{132} &  3.2&&  \textbf{132} & 817720&& 150 & 3973.7\\ 
   adjnoun & 112& 67&&  \textbf{38} & 32.5&& 42 & 20.8&& 42 & 6225&& 42 & 55.9\\ 
    ash292 & 292& 32&& 23 & 316.3&&  \textbf{21} & 26.6&&  \textbf{21} & 47741&& 32 & 3718.3\\ 
     ash85 & 85& 13&& 10 & 10.0&& 10 &  0.6&& 10 & 70416&&  \textbf{ 9} & 47.3\\ 
  bcspwr03 & 118& 17&& 12 & 19.6&&  \textbf{11} & 11.8&&  \textbf{11} & 6459&& 17 & 193.5\\ 
  bcspwr04 & 274& 49&& 31 & 304.1&&  \textbf{29} & 207.1&&  \textbf{29} & 5876&& 49 & 2509.4\\ 
  bcsstk03 & 112& \textbf{ 3}&&  \textbf{ 3} &  3.8&&  \textbf{ 3} &  0.3&&  \textbf{ 3} & 54309&&  \textbf{ 3} & 162.2\\ 
  bcsstk04 & 132& 54&&  \textbf{37} & 52.4&& 40 &  3.5&& 40 & 58609&& 38 & 184.2\\ 
  bcsstk05 & 153& 24&&  \textbf{20} & 41.8&&  \textbf{20} &  4.7&&  \textbf{20} & 35781&& 24 & 294.0\\ 
  bcsstk22 & 138& 12&& 11 & 63.5&&  \textbf{10} &  2.4&&  \textbf{10} & 102888&& 12 & 158.5\\ 
  can\_144 & 144& 18&&  \textbf{13} & 25.9&&  \textbf{13} &  8.5&&  \textbf{13} & 12159&& 18 & 247.4\\ 
  can\_161 & 161& \textbf{18}&&  \textbf{18} & 30.1&& 19 &  1.3&&  \textbf{18} & 90297&&  \textbf{18} & 258.9\\ 
  can\_187 & 187& 23&&  \textbf{14} & 62.0&& 15 &  2.0&& 15 & 123730&& 23 & 546.6\\ 
  can\_229 & 229& 37&&  \textbf{32} & 155.4&&  \textbf{32} & 16.1&&  \textbf{32} & 39029&& 37 & 1041.5\\ 
  can\_256 & 256& 123&& 67 & 405.7&&  \textbf{62} & 249.2&&  \textbf{62} & 6561&& 85 & 1179.5\\ 
  can\_268 & 268& 98&& 57 & 359.7&&  \textbf{53} & 334.7&&  \textbf{53} & 4256&& 82 & 1836.2\\ 
  can\_292 & 292& 76&&  \textbf{40} & 716.8&& 50 & 50.2&& 50 & 56878&& 76 & 628.3\\ 
   can\_96 & 96& 23&&  \textbf{13} & 16.0&& 21 &  0.6&& 21 & 99598&&  \textbf{13} & 39.5\\ 
  dwt\_162 & 162& 16&&  \textbf{13} & 72.0&& 14 &  2.8&& 14 & 102152&& 16 & 174.3\\ 
  dwt\_193 & 193& 54&&  \textbf{32} & 145.3&& 34 & 56.0&& 34 & 10505&& 42 & 891.7\\ 
  dwt\_198 & 198& 13&&  \textbf{ 8} & 70.5&& 10 &  3.4&& 10 & 83262&& 13 & 488.2\\ 
  dwt\_209 & 209& 33&&  \textbf{26} & 97.9&& 29 & 12.8&& 29 & 30822&& 33 & 773.0\\ 
  dwt\_221 & 221& 15&&  \textbf{14} & 97.8&&  \textbf{14} &  2.8&&  \textbf{14} & 139580&& 15 & 1152.0\\ 
  dwt\_234 & 234& 24&&  \textbf{12} & 110.3&&  \textbf{12} & 13.5&&  \textbf{12} & 32277&& 24 & 990.3\\ 
  dwt\_245 & 245& 55&&  \textbf{28} & 267.1&& 31 & 132.3&& 31 & 8100&& 53 & 1609.1\\ 
   dwt\_87 & 87& 18&& 12 &  9.1&& 12 &  4.0&& 12 & 9072&&  \textbf{11} & 38.6\\ 
  football & 115& 76&&  \textbf{37} & 43.8&& 42 & 10.6&& 42 & 16866&& 39 & 157.3\\ 
     grid1 & 252& 22&& 22 & 409.5&&  \textbf{19} &  0.7&&  \textbf{19} & 2200545&& 22 & 1799.5\\ 
grid1\_dual & 224& \textbf{17}&&  \textbf{17} & 162.2&&  \textbf{17} &  0.7&&  \textbf{17} & 984859&&  \textbf{17} & 1195.5\\ 
      jazz & 198& 141&&  \textbf{69} & 162.2&& 81 & 105.1&& 81 & 6188&& 88 & 324.7\\ 
 lshp\_265 & 265& 18&& 18 & 129.2&&  \textbf{17} &  0.8&&  \textbf{17} & 612015&& 18 & 951.9\\ 
   lund\_a & 147& \textbf{23}&&  \textbf{23} & 42.4&&  \textbf{23} &  0.4&&  \textbf{23} & 396257&&  \textbf{23} & 263.5\\ 
   lund\_b & 147& \textbf{23}&&  \textbf{23} & 49.5&&  \textbf{23} &  0.4&&  \textbf{23} & 462959&&  \textbf{23} & 260.3\\ 
   mesh3e1 & 289& \textbf{17}&&  \textbf{17} & 192.7&&  \textbf{17} &  0.9&&  \textbf{17} & 839178&&  \textbf{17} & 2368.1\\ 
  mesh3em5 & 289& \textbf{17}&&  \textbf{17} & 189.4&&  \textbf{17} &  0.9&&  \textbf{17} & 824889&&  \textbf{17} & 2367.0\\ 
      nos1 & 237&  4&&  \textbf{ 3} & 43.8&&  \textbf{ 3} &  1.1&&  \textbf{ 3} & 156608&&  4 & 1706.8\\ 
      nos4 & 100& 12&&  \textbf{10} &  9.9&&  \textbf{10} &  4.2&&  \textbf{10} & 9422&& 11 & 97.1\\ 
  polbooks & 105& 30&&  \textbf{20} & 17.7&& 21 & 12.2&& 21 & 5883&& 22 & 112.4\\ 
spaceStation\_1 & 99& 55&&  \textbf{28} & 24.1&& 33 &  3.9&& 33 & 25099&& 30 & 90.5\\ 
   sphere3 & 258& 32&& 30 & 328.0&& 28 &  1.7&&  \textbf{27} & 744388&& 32 & 1649.3\\ 
tumorAntiAngiogenesis\_1 & 205& 199&&  \textbf{100} & 170.6&&  \textbf{100} & 67.0&&  \textbf{100} & 10212&& 101 & 542.7\\ 
$N_{\best}$ &&   7 && 37 & && 24 & && 27 &  &&  12 &  \\ \hline
     \end{longtable}
\end{footnotesize}
\end{center}

\section{Concluding remarks} \label{secion:conclusion}
In this paper, we considered optimization problem \eqref{equ:prob:optperm} over
permutation matrices. We proposed an $L_p$ regularization model for problem
\eqref{equ:prob:optperm}. We studied theoretical properties of the proposed
model, including its exactness, the connection between its local minimizers and
the permutation matrices, and the lower bound theory of nonzero elements of its
KKT points. Based on the $L_p$ regularization model, we proposed an efficient
$L_p$ regularization algorithm for solving problem \eqref{equ:prob:optperm}. To
further improve the performance of the proposed $L_p$ regularization algorithm,
we combined it with the classical cutting plane technique and/or a novel
negative proximal point technique. Our numerical results showed that the
proposed $L_p$  regularization algorithm and its variants performed quite well for QAP instances from QAPLIB.  For the bandwidth minimization problem which has many applications in different fields, our proposed algorithms also exhibit satisfactory performance.

Notice that the main computational cost of our proposed algorithms
arises  from many projections onto $\Dcaln$ (or the restricted domain $\Acal$). It would
be helpful to develop faster algorithms for computing these projection
especially when $n$ is large. Moreover, how to take full advantage of the low rank and/or sparse structure of
the matrices $A$ and $B$ in our algorithms remains further investigation.

Our $L_p$ regularization model/algorithm for solving problem
\eqref{equ:prob:optperm} can be directly extended to solve a general nonlinear
binary programming with a fixed number of nonzero elements as:
\be
 \min_{x \in \Omega}\, f(x)\quad \st \quad x_i \in \{0,1\},~i=1,2,\ldots,n,\ \|x\|_0 = N, \nn
\ee
where $\Omega \subset \Rbb^n$ is convex and $N$ is given.  The $L_p$ regularization model for the above problem is
\be
 \min_{x \in \Omega}\, f(x) + \sigma \|x+\epsilon\e\|_p^p \quad \st \quad  0\leq x_i\leq 1,~i=1,2,\ldots,n, \e^{\Tsf} x = N, \nn
\ee and it can be solved efficiently by the projected gradient method.
It would be interesting to extend  the $L_p$-norm regularization  to solve some
general nonlinear integer programming problems.

\section*{Acknowledgements}
We thank Yong Xia for the valuable discussions  on the quadratic
assignment problem and for sharing his code LagSA. We also thank Renata Sotirovr for the helpful discussions on
the bandwidth minimization problem and for sharing her code imprRevCMcK. The authors are grateful to the Associate Editor  Alexandre d'Aspremont  and two anonymous referees for their detailed and valuable comments and suggestions.

\bibliography{qaplp}

\begin{thebibliography}{10}

\bibitem{adams2007level}
{\sc W.~P. Adams, M.~Guignard, P.~M. Hahn, and W.~L. Hightower}, {\em A level-2
  reformulation--linearization technique bound for the quadratic assignment
  problem}, European Journal of Operational Research, 180 (2007), pp.~983--996.

\bibitem{ahuja2000greedy}
{\sc R.~K. Ahuja, J.~B. Orlin, and A.~Tiwari}, {\em A greedy genetic algorithm
  for the quadratic assignment problem}, Computers \& Operations Research, 27
  (2000), pp.~917--934.

\bibitem{anstreicher2003recent}
{\sc K.~M. Anstreicher}, {\em Recent advances in the solution of quadratic
  assignment problems}, Mathematical Programming, 97 (2003), pp.~27--42.

\bibitem{bai2007computing}
{\sc Z.-J. Bai, D.~Chu, and R.~C. Tan}, {\em Computing the nearest doubly
  stochastic matrix with a prescribed entry}, SIAM Journal on Scientific
  Computing, 29 (2007), pp.~635--655.

\bibitem{barzilai1988two}
{\sc J.~Barzilai and J.~M. Borwein}, {\em Two-point step size gradient
  methods}, IMA Journal of Numerical Analysis, 8 (1988), pp.~141--148.

\bibitem{battiti1994reactive}
{\sc R.~Battiti and G.~Tecchiolli}, {\em The reactive tabu search}, ORSA
  Journal on Computing, 6 (1994), pp.~126--140.

\bibitem{bazaraa1982use}
{\sc M.~S. Bazaraa and H.~D. Sherali}, {\em On the use of exact and heuristic
  cutting plane methods for the quadratic assignment problem}, Journal of the
  Operational Research Society,  (1982), pp.~991--1003.

\bibitem{bian2015complexity}
{\sc W.~Bian, X.~Chen, and Y.~Ye}, {\em Complexity analysis of interior point
  algorithms for non-{L}ipschitz and nonconvex minimization}, Mathematical
  Programming, 149 (2015), pp.~301--327.

\bibitem{birgin2000nonmonotone}
{\sc E.~G. Birgin, J.~M. Mart{\'\i}nez, and M.~Raydan}, {\em Nonmonotone
  spectral projected gradient methods on convex sets}, SIAM Journal on
  Optimization, 10 (2000), pp.~1196--1211.

\bibitem{birkhoff1946tres}
{\sc G.~Birkhoff}, {\em Tres observaciones sobre el algebra lineal}, Univ. Nac.
  Tucum{\'a}n Rev. Ser. A, 5 (1946), pp.~147--151.

\bibitem{burkard2013quadratic}
{\sc R.~Burkard}, {\em Quadratic assignment problems}, in Handbook of
  Combinatorial Optimization, P.~M. Pardalos, D.-Z. Du, and R.~L. Graham, eds.,
  Springer New York, 2013, pp.~2741--2814.

\bibitem{burkard1999quadratic}
{\sc R.~E. Burkard, E.~{\c{C}}ela, P.~M. Pardalos, and L.~S. Pitsoulis}, {\em
  The quadratic assignment problem}, in Handbook of Combinatorial Optimization,
  Springer, 1999, pp.~1713--1809.

\bibitem{burkard1997qaplib}
{\sc R.~E. Burkard, S.~E. Karisch, and F.~Rendl}, {\em {QAPLIB}--a quadratic
  assignment problem library}, Journal of Global Optimization, 10 (1997),
  pp.~391--403.

\bibitem{candes2006stable}
{\sc E.~J. Candes, J.~K. Romberg, and T.~Tao}, {\em Stable signal recovery from
  incomplete and inaccurate measurements}, Communications on pure and applied
  mathematics, 59 (2006), pp.~1207--1223.

\bibitem{chen2010lower}
{\sc X.~Chen, F.~Xu, and Y.~Ye}, {\em Lower bound theory of nonzero entries in
  solutions of $\ell_2$-$\ell_p$ minimization}, SIAM Journal on Scientific
  Computing, 32 (2010), pp.~2832--2852.

\bibitem{chinn1982bandwidth}
{\sc P.~Z. Chinn, J.~Chv{\'a}talov{\'a}, A.~K. Dewdney, and N.~E. Gibbs}, {\em
  The bandwidth problem for graphs and matrices-a survey}, Journal of Graph
  Theory, 6 (1982), pp.~223--254.

\bibitem{conte2004thirty}
{\sc D.~Conte, P.~Foggia, C.~Sansone, and M.~Vento}, {\em Thirty years of graph
  matching in pattern recognition}, International Journal of Pattern
  Recognition and Artificial Intelligence, 18 (2004), pp.~265--298.

\bibitem{dai2005projected}
{\sc Y.-H. Dai and R.~Fletcher}, {\em Projected {B}arzilai--{B}orwein methods
  for large-scale box-constrained quadratic programming}, Numerische
  Mathematik, 100 (2005), pp.~21--47.

\bibitem{davis2011university}
{\sc T.~A. Davis and Y.~Hu}, {\em The university of {F}lorida sparse matrix
  collection}, ACM Transactions on Mathematical Software (TOMS), 38 (2011),
  p.~1.

\bibitem{donoho2006compressed}
{\sc D.~L. Donoho}, {\em Compressed sensing}, Information Theory, IEEE
  Transactions on, 52 (2006), pp.~1289--1306.

\bibitem{drezner2015quadratic}
{\sc Z.~Drezner}, {\em The quadratic assignment problem}, in Location Science,
  Springer, 2015, pp.~345--363.

\bibitem{feo1995greedy}
{\sc T.~A. Feo and M.~G. Resende}, {\em Greedy randomized adaptive search
  procedures}, Journal of Global Optimization, 6 (1995), pp.~109--133.

\bibitem{fischetti2012three}
{\sc M.~Fischetti, M.~Monaci, and D.~Salvagnin}, {\em Three ideas for the
  quadratic assignment problem}, Operations Research, 60 (2012), pp.~954--964.

\bibitem{flnke2011quadratic}
{\sc G.~FlNKE, R.~E. Burkard, and F.~Rendl}, {\em Quadratic assignment
  problems}, Surveys in Combinatorial Optimization,  (2011), pp.~61--82.

\bibitem{fogel2013convex}
{\sc F.~Fogel, R.~Jenatton, F.~Bach, and A.~d'Aspremont}, {\em Convex
  relaxations for permutation problems}, arXiv preprint arXiv:1306.4805,
  (2013).

\bibitem{frieze1989algorithms}
{\sc A.~M. Frieze, J.~Yadegar, S.~El-Horbaty, and D.~Parkinson}, {\em
  Algorithms for assignment problems on an array processor}, Parallel
  Computing, 11 (1989), pp.~151--162.

\bibitem{ge2011note}
{\sc D.~Ge, X.~Jiang, and Y.~Ye}, {\em A note on the complexity of ${L}_p$
  minimization}, Mathematical Programming, Series B, 129 (2011), pp.~285--299.

\bibitem{george1981computer}
{\sc A.~George and J.~W. Liu}, {\em Computer solution of large sparse positive
  definite}, Prentice Hall Professional Technical Reference, 1981.

\bibitem{goemans2015}
{\sc M.~Goemans}, {\em Smallest compact formulation for the permutahedron},
  Mathematical Programming, 153 (2015), pp.~5--11.

\bibitem{huang2008continuous}
{\sc T.~Huang}, {\em Continuous optimization methods for the quadratic
  assignment problem}, PhD thesis, The University of North Carolina at Chapel
  Hill, 2008.

\bibitem{ji2013beyond}
{\sc S.~Ji, K.-F. Sze, Z.~Zhou, A.~M.-C. So, and Y.~Ye}, {\em Beyond convex
  relaxation: A polynomial-time non-convex optimization approach to network
  localization}, in INFOCOM, 2013 Proceedings IEEE, IEEE, 2013, pp.~2499--2507.

\bibitem{kim2015lagrangian}
{\sc S.~Kim, M.~Kojima, and K.-C. Toh}, {\em A {L}agrangian-{DNN} relaxation: a
  fast method for computing tight lower bounds for a class of quadratic
  optimization problems}, Mathematical Programming, 156 (2015), pp.~161--187.

\bibitem{koopmans1957assignment}
{\sc T.~C. Koopmans and M.~Beckmann}, {\em Assignment problems and the location
  of economic activities}, Econometrica: Journal of the Econometric Society,
  (1957), pp.~53--76.

\bibitem{lai1999survey}
{\sc Y.-L. Lai and K.~Williams}, {\em A survey of solved problems and
  applications on bandwidth, edgesum, and profile of graphs}, Journal of graph
  theory, 31 (1999), pp.~75--94.

\bibitem{lawler1963quadratic}
{\sc E.~L. Lawler}, {\em The quadratic assignment problem}, Management Science,
  9 (1963), pp.~586--599.

\bibitem{lim2014beyond}
{\sc C.~H. Lim and S.~J. Wright}, {\em Beyond the {B}irkhoff polytope: convex
  relaxations for vector permutation problems}, arXiv preprint arXiv:1407.6609,
   (2014).

\bibitem{liu2015joint}
{\sc Y.-F. Liu, Y.-H. Dai, and S.~Ma}, {\em Joint power and admission control:
  Non-convex $l_q$ approximation and an effective polynomial time deflation
  approach}, IEEE Transactions on Signal Processing, 63 (2015), pp.~3641--3656.

\bibitem{liu2016smoothing}
{\sc Y.-F. Liu, S.~Ma, Y.-H. Dai, and S.~Zhang}, {\em A smoothing {SQP}
  framework for a class of composite ${L}_q$ minimization over polyhedron},
  Mathematical Programming, 158 (2016), pp.~467--500.

\bibitem{liu2012extended}
{\sc Z.-Y. Liu, H.~Qiao, and L.~Xu}, {\em An extended path following algorithm
  for graph-matching problem}, IEEE Transactions on Pattern Analysis and
  Machine Intelligence, 34 (2012), pp.~1451--1456.

\bibitem{loiola2007survey}
{\sc E.~M. Loiola, N.~M.~M. de~Abreu, P.~O. Boaventura-Netto, P.~Hahn, and
  T.~Querido}, {\em A survey for the quadratic assignment problem}, European
  Journal of Operational Research, 176 (2007), pp.~657--690.

\bibitem{lu2014iterative}
{\sc Z.~Lu}, {\em Iterative reweighted minimization methods for $l_p$
  regularized unconstrained nonlinear programming}, Mathematical Programming,
  147 (2014), pp.~277--307.

\bibitem{lyzinski2014graph}
{\sc V.~Lyzinski, D.~Fishkind, M.~Fiori, J.~T. Vogelstein, C.~E. Priebe, and
  G.~Sapiro}, {\em Graph matching: relax at your own risk}, arXiv preprint
  arXiv:1405.3133,  (2014).

\bibitem{misevicius2012implementation}
{\sc A.~Misevicius}, {\em An implementation of the iterated tabu search
  algorithm for the quadratic assignment problem}, OR Spectrum, 34 (2012),
  pp.~665--690.

\bibitem{mladenovic2010variable}
{\sc N.~Mladenovic, D.~Urosevic, D.~P{\'e}rez-Brito, and C.~G.
  Garc{\'\i}a-Gonz{\'a}lez}, {\em Variable neighbourhood search for bandwidth
  reduction}, European Journal of Operational Research, 200 (2010), pp.~14--27.

\bibitem{murthy1992local}
{\sc K.~A. Murthy, Y.~Li, and P.~M. Pardalos}, {\em A local search algorithm
  for the quadratic assignment problem}, Informatica, 3 (1992).

\bibitem{papadimitriou1976np}
{\sc C.~H. Papadimitriou}, {\em The {NP}-completeness of the bandwidth
  minimization problem}, Computing, 16 (1976), pp.~263--270.

\bibitem{pardalos1994greedy}
{\sc L.~Pardalos and M.~Resende}, {\em A greedy randomized adaptive search
  procedure for the quadratic assignment problem}, Quadratic Assignment and
  Related Problems, DIMACS Series on Discrete Mathematics and Theoretical
  Computer Science, 16 (1994), pp.~237--261.

\bibitem{qela1994biquadratic}
{\sc R.~E. B.~E. QELA and B.~KLINZ}, {\em On the biquadratic assignment
  problem}, in Quadratic Assignment and Related Problems: DIMACS Workshop, May
  20-21, 1993, vol.~16, American Mathematical Society, 1994, pp.~117--146.

\bibitem{samworth2012independent}
{\sc R.~J. Samworth, M.~Yuan, et~al.}, {\em Independent component analysis via
  nonparametric maximum likelihood estimation}, The Annals of Statistics, 40
  (2012), pp.~2973--3002.

\bibitem{taillard1991robust}
{\sc E.~Taillard}, {\em Robust taboo search for the quadratic assignment
  problem}, Parallel Computing, 17 (1991), pp.~443--455.

\bibitem{taillard1995comparison}
{\sc E.~D. Taillard}, {\em Comparison of iterative searches for the quadratic
  assignment problem}, Location Science, 3 (1995), pp.~87--105.

\bibitem{tate1995genetic}
{\sc D.~M. Tate and A.~E. Smith}, {\em A genetic approach to the quadratic
  assignment problem}, Computers \& Operations Research, 22 (1995), pp.~73--83.

\bibitem{van2014bounding}
{\sc E.~R. van Dam and R.~Sotirov}, {\em On bounding the bandwidth of graphs
  with symmetry}, INFORMS Journal on Computing, 27 (2014), pp.~75--88.

\bibitem{wen2013feasible}
{\sc Z.~Wen and W.~Yin}, {\em A feasible method for optimization with
  orthogonality constraints}, Mathematical Programming, 142 (2013),
  pp.~397--434.

\bibitem{xia2010efficient}
{\sc Y.~Xia}, {\em An efficient continuation method for quadratic assignment
  problems}, Computers \& Operations Research, 37 (2010), pp.~1027--1032.

\bibitem{xia2006new}
{\sc Y.~Xia and Y.-X. Yuan}, {\em A new linearization method for quadratic
  assignment problems}, Optimization Methods and Software, 21 (2006),
  pp.~805--818.

\bibitem{zaslavskiy2009path}
{\sc M.~Zaslavskiy, F.~Bach, and J.-P. Vert}, {\em A path following algorithm
  for the graph matching problem}, IEEE Transactions on Pattern Analysis and
  Machine Intelligence, 31 (2009), pp.~2227--2242.

\bibitem{zhang2004nonmonotone}
{\sc H.-C. Zhang and W.~W. Hager}, {\em A nonmonotone line search technique and
  its application to unconstrained optimization}, SIAM Journal on Optimization,
  14 (2004), pp.~1043--1056.

\end{thebibliography}
\bibliographystyle{siam}

\end{document}